\documentclass[11pt,a4paper]{article}
\usepackage{amsfonts,amsmath,latexsym,verbatim,amscd,mathrsfs,color,array}
\usepackage{amssymb,amsthm}

\usepackage[margin=1in]{geometry}
\usepackage{enumitem}
\usepackage{graphicx}
\usepackage{epstopdf}
\usepackage{cite}
\usepackage{booktabs}
\usepackage{float}
\usepackage{multirow, multicol}

\usepackage{caption2}
\usepackage{subfigure}

\usepackage[colorlinks=true,linkcolor=blue,citecolor=blue,urlcolor=blue]{hyperref}

\newtheorem{theorem}{Theorem}[section]
\newtheorem{lemma}{Lemma}[section]

\newtheorem{prop}{Proposition}[section]
\newtheorem{remark}{Remark}[section]

\newcommand{\R}{\mathbb{R}}
\newcommand{\norm}[1]{\left\lVert #1 \right\rVert}

\title{Data-Driven optimal control via Koopman operators and Hamilton-Jacobi-Bellman equations}
\author{Guoyuan Chen$^1$\thanks{$^1$School of Data Sciences, Zhejiang University of Finance and Economics, Hangzhou 310018, P. R. China, E-mail: gychen@zufe.edu.cn} and Gaosheng Zhu$^3$\thanks{$^3$School of Mathematics, Harbin Institute of Technology, Harbin 150001, P. R. China, E-mail: gaozsc@163.com}}
\date{}

\begin{document}
\maketitle

\begin{abstract}
This paper presents a data-driven stable manifold (DD-SM) method, which integrates Koopman operator representation learning with the geometric stable manifold approach to Hamilton-Jacobi-Bellman (HJB) equations, enabling end-to-end optimal feedback control synthesis from raw trajectory data without prior knowledge of system dynamics.

We construct an augmented control system under a unified symmetric subspace decomposition (SSD) and extended dynamic mode decomposition (EDMD) framework for joint approximation of the drift field, control matrix and their spatial derivatives, and derive probabilistic finite-sample error bounds for invariant and non-invariant dictionary spaces to yield a provably accurate approximate characteristic system of HJB equation.
Via Lyapunov-Perron operator and ODE perturbation analysis, we prove the data-driven stable manifold achieves monotonically decreasing semi-global error with growing training data. We further establish closed-loop exponential stability and quantify the optimality gap, both tightenable by refining model accuracy.
An efficient algorithm pipeline with adaptive data generation and deep neural approximation is developed, outputting control signals within 1 millisecond. Experiments on a modified van der Pol oscillator verify the effectiveness of our method.

\textbf{Keywords:} data-driven optimal control, stable manifold, Koopman operator, Hamilton-Jacobi-Bellman equation
\end{abstract}

\section{Introduction}

Data-driven modeling and control of nonlinear systems have become a core research direction bridging machine learning, control theory and applied mathematics. Learning-based methods effectively address challenging nonlinear, high-dimensional and unmodeled dynamics scenarios where traditional model-based techniques fail. A fundamental challenge in this field is to develop optimal control strategies that combine rigorous theoretical performance guarantees with practical engineering applicability, particularly for infinite-horizon optimal control problems formulated via HJB equation.

For infinite horizon optimal control, the stabilizing HJB solution corresponds to the stable manifold of its characteristic Hamiltonian system \cite{van1991state, sakamoto2008analytical}. Optimal feedback can be derived directly from this manifold, which generalises LQR theory for nonlinear systems. Traditional stable manifold solvers are confined to equilibrium neighbourhoods, plagued by the curse of dimensionality and unquantifiable approximation error \cite{sakamoto2008analytical,sakamoto2013case,chen2020symplectic}. Though recent deep learning schemes support mesh-free high dimensional manifold fitting \cite{chen2024deep,chen2025h}, they require known system dynamics and lack rigorous guarantees for pure data-driven control from raw trajectory measurements.

First proposed a century ago, the Koopman operator framework transforms nonlinear system investigation into linear operator analysis on function spaces \cite{koopman1931hamiltonian, koopman1932dynamical}. It captures state space dynamics through observable function evolution and bypasses intractable nonlinear equations \cite{mezic2005spectral, rowley2009spectral, mezic2013analysis}. As the prevailing data-driven approximation method, extended dynamic mode decomposition (EDMD) lifts states into a feature space spanned by basis observables and computes a finite dimensional Koopman operator projection via least squares regression \cite{Brunton2022Koopman}.

\subsection{Related Work}

Observable dictionary construction is central to EDMD-based Koopman approximation. A dictionary spanned by Koopman eigenfunctions forms an invariant subspace of the operator \cite{brunton2016koopman}, reducing nonlinear system to linear operator on this subspace. This has driven extensive research on Koopman invariant subspaces \cite{takeishi2017learning}, yielding metrics such as the consistency index to balance model expressiveness and accuracy \cite{haseli2023d, haseli2023invariance}.For automatic dictionary construction, various methods have been developed, including analytical constructions \cite{shi2021acd}, Koopman eigenfunction kernels \cite{bevanda2023kernel}, and deep neural network approaches \cite{li2017extended, lusch2018deep, yeung2019deep}. Extensions to controlled systems \cite{korda2020optimal}, joint state reconstruction schemes for stable systems \cite{bevanda2022dictionary}, and physics-informed EDMD integrating prior knowledge \cite{baddoo2023physics} have also been proposed. For dictionary optimization, symmetric subspace decomposition (SSD) type methods identify maximal invariant subspaces to improve modeling accuracy \cite{haseli2021learning, haseli2023generalizing, haseli2023invariance}. \cite{conradie2026trustworthy} presents a unified a posteriori framework to validate and refine Koopman approximations.
Rigorous spectral analysis of the Koopman operator remains challenging due to its generally continuous spectrum. Provable DMD variants include residual DMD \cite{colbrook2024resdmd} and rigged DMD for resolvent/continuous spectrum estimation \cite{colbrook2025rigged}. Sharp convergence rates \cite{kostic2023sharp}, fundamental approximability limits \cite{colbrook2024barriers}, and Liouville-based singular DMD using generator compactness \cite{rosenfeld2023singular} have been established. Further relevant works can be found in \cite{conradie2026trustworthy, strasser2026overview}.

Finite-data EDMD error bounds have advanced notably, with total error decomposed into projection error (finite dictionary truncation) and statistical estimation error (limited samples).Building on infinite-limit strong convergence \cite{korda2018convergence}, Mezi\'{c} derived a probabilistic finite-sample bound for deterministic ergodic systems with discrete non-dense spectra \cite{mezic2022numerical}. Zhang and Zuazua unified bounds for both error components for i.i.d.-sampled systems via finite-element methods and associated linear transport equation \cite{zhang2023quantitative}. For both ordinary and stochastic systems, \cite{nuske2023finitedata} established probabilistic bounds with quadratic sample complexity, extending to generators and controlled systems under exponential stability requirements; \cite{philipp2024variance} relaxed these assumptions via variance representations for tighter bounds.
\cite{llamazares2024datadriven} proved the convergence of the approximating operator with explicit convergence rate under relaxed conditions. In \cite{kohne2025lerror, bold2025kernel}, kernel-based EDMD delivers uniform pointwise bounds with equilibrium-vanishing proportional errors, enabling rigorous closed-loop control guarantees. Further relevant results can be found in \cite{conradie2026trustworthy, strasser2026overview}.

Data-driven control has advanced rapidly, with methods that learn surrogate models or synthesize controllers directly from observed trajectories (see, e.g., \cite{berberich2024overview, depersis2023learning, martin2023sdpsurvey}).
The Koopman operator framework is a prominent paradigm for data-driven nonlinear control. Extended dynamic mode decomposition with control (EDMDc) constructs linear state-input representations in the lifted space, enabling classical control methods such as LQR and MPC (e.g. \cite{proctor2016dynamic, korda2018linear, iacob2024koopman}).
Several bilinear EDMDc variants have also been established (see e.g., \cite{bruder2021advantages, strasser2024safedmd, strasser2025kernel}). For a thorough review on details of EDMDc, we refer the reader to \cite{strasser2026overview}.

Numerous existing studies on Koopman-based optimal control integrate Koopman operator techniques to develop data-driven model predictive control schemes \cite{Arbabi2018Data, korda2018linear, Peitz2020Data, Zhang2022Robust, Mamakoukas2022Robust, Yuan2022data, Narasingam2023Data, deJong2024Koopman, bold2025data, bold2025kernel, schimperna2025data, Xie2025Bilinear, Cibulka2025dictionary, Chen2026data}.
Nevertheless, one crucial research gap remains for general optimal control problems formulated via the HJB equation. Existing work lacks systematic analysis on how Koopman approximation error propagates toward approximate HJB solutions, and rigorous theoretical assurances for closed-loop stability and control optimality are therefore still absent.


\subsection{Contributions}

To address the above challenges, this paper proposes the \textit{data-driven stable manifold (DD-SM)} method, which integrates Koopman operator representation learning with the stable manifold approach for HJB equations. The main contributions are summarized as follows:

(1) We introduce an augmented control system by differentiating the original dynamics, enabling joint approximation of the drift field, control matrix and their spatial derivatives. A unified EDMD-SSD framework is proposed to identify approximate Koopman-invariant subspaces of the augmented system, with rigorous probabilistic error bounds derived for both invariant and non-invariant dictionary spaces (Theorems \ref{t:bounds-aug} and \ref{t:bounds-aug-general}), yielding a provably accurate approximate characteristic Hamiltonian system for the HJB problem.

(2) Through perturbation analysis of Lyapunov-Perron operators and ODEs, we prove that the error between the data-driven approximate stable manifold and its exact counterpart can be made arbitrarily small over a semi-global domain, decreases monotonically with growing training data volume, and establishes a quantitative link between data quantity and control performance.

(3) Building on the manifold error bounds, we prove that the feedback controller constructed from the approximate stable manifold ensures exponential stability of the closed-loop system at the equilibrium point. We further quantify the gap between the data-driven control cost and the exact optimal cost, showing that both the stability margin and the optimality gap can be systematically tightened by refining model approximation accuracy.

(4) Guided by our theoretical results, we develop a computationally efficient algorithm pipeline integrating adaptive data generation, model refinement, and deep neural network approximation for stable manifold computation. The trained network generates control signals within 1 millisecond, satisfying real-time requirements of practical engineering systems.

At the core of our analysis is a complete error propagation chain connecting finite training data, Koopman model approximation, stable manifold computation, and closed-loop control performance. The proof proceeds in three stages: first, tight error bounds for the augmented system are derived via the unified SSD-EDMD framework, establishing linear-in-$y$ observables over a semi-global domain; second, the classical Lyapunov-Perron method is extended to the perturbed approximate Hamiltonian system for local manifold error bounds, which are then extended to the semi-global domain via ODE perturbation theory; third, manifold error bounds are translated into explicit guarantees for closed-loop exponential stability and near-optimal control cost.

\subsection{Organization}
The rest of the paper is organized as follows. Section \ref{s:preliminaries} reviews Koopman and Lie operators. Section \ref{s:DD-control} proves some basic estimates for data-driven Koopman approximation. Section \ref{s:EDMD} constructs the augmented control system and presents EDMD-based approximation with error bounds. Section \ref{s:stable} reviews stable manifold method. Section \ref{s:edmd-sm} develops data-driven stable manifolds with semi-global error bounds, and analyzes closed-loop stability and optimality. Section \ref{s:algorithm} details the algorithm pipeline. Section \ref{s:example} validates the DD-SM method on a modified van der Pol optimal control problem.

\subsection{Notation}
\label{sec:intro_notations}

Let $\mathbb{R}$ denote the set of real numbers. Let $[l:n]$ denote the set of integers $i$ satisfying $l \le i \le n$. Let $E_i = (0, \cdots, 0, 1, 0, \cdots, 0)$ denote the standard unit vector with the $i$-th component equal to 1, for $i \in [1:n]$. Let $\Omega \subset \mathbb{R}^n$ be a bounded domain with piecewise smooth boundary, and let $\bar{\Omega}$ denote its closure. Let $W^{1,2}(\Omega) := \{\varphi \in L^2(\Omega) \mid \int_{\Omega} |\nabla\varphi(x)|^2 dx < \infty\}$ be the $(1,2)$ Sobolev space with norm
$
\|\varphi\| = \left[\int_{\Omega} \left(|\varphi(x)|^2 + |\nabla \varphi(x)|^2\right) dx\right]^{\frac{1}{2}}.
$
For any matrix $A$, $A^{\dagger}$ denotes its Moore--Penrose pseudo-inverse.

\section{Preliminaries}\label{s:preliminaries}

This section briefly reviews Koopman operators for control systems; further details can be found in \cite{Brunton2022Koopman}.

Let $\mathbf{\Omega}\subset\mathbb R^J$ be a bounded domain with piecewise smooth boundary.
Consider the nonlinear system
\begin{eqnarray}\label{e:system-g}
\dot \xi(t)=F(\xi(t)),
\end{eqnarray}
where $F:\mathbb R^J\to \mathbb R^J$ is locally Lipschitz and satisfies $F(0)=0$. Let $\xi(t;\xi_0)$ stand for the unique system trajectory starting from $\xi(0)=\xi_0\in\\mathbf{Omega}$ over $t\ge0$. The domain $\mathbf{\Omega}$ is assumed forward invariant under the system flow, i.e., $\xi(t;\xi_0)\in \mathbf{\Omega}$ holds for all $\xi_0\in\mathbf{\Omega}$ and $t\in[0,\infty)$.

The Koopman operator of system \eqref{e:system-g} is defined by
$
(\mathcal K^t\varphi)(\xi_0)=\varphi(\xi(t;\xi_0)), t\ge0,\xi_0\in \mathbf{\Omega},\varphi\in L^2(\mathbf{\Omega}),
$
where $\varphi$ denotes observables.
Its infinitesimal generator for $\varphi\in W^{1,2}(\mathbf{\Omega})$ reads
$
\mathcal L\varphi =\lim_{t\to 0^+}\frac{\mathcal K^t\varphi -\varphi}{t},
$
which simplifies via direct calculation to the Lie operator
\begin{eqnarray}\label{e:lie-op-comp}
(\mathcal L\varphi)(\xi)=(F\cdot \nabla \varphi)(\xi).
\end{eqnarray}
Setting $\Phi(t)=\mathcal K^t\varphi=\varphi(\xi(t;\cdot))$, we obtain the evolution equation $\dot\Phi(t)=\mathcal L\Phi(t)$ with initial condition $\Phi(0)=\varphi$.

Let $\mathbb U\subset\mathbb R^M$ denote a domain with piecewise smooth boundary. We study the control-affine system
\begin{eqnarray}\label{e:control-g}
\dot \xi=F(\xi)+G(\xi)U,
\end{eqnarray}
where $U(t)=(U_1,\dots,U_M)^\top\in L_{\rm loc}^{\infty}([0,\infty),\mathbb U)$ stands for control inputs. The drift term $F\in C^2(\mathbf{\Omega})$ satisfies $F(0)=0$, and each column $G_i$ of $G$ also lies in $C^2(\mathbf{\Omega})$.

$\mathcal K^t_U$ denotes the Koopman operator for the system flow under constant control $U$, with its associated Lie operator written as $\mathcal L^U$. Define $E_i\in\mathbb R^M$ as the standard unit vector with unity at the $i$-th entry; $\mathcal L^0$ and $\mathcal L^{E_i}$ correspond to zero input and unit input $E_i$, respectively. Combining Eq. \eqref{e:lie-op-comp} and system dynamics \eqref{e:control-g} (see e.g. \cite{Surana2016koopman}), we derive
\begin{equation}\label{e:lie-op-Lu}
\mathcal L^U=\mathcal L^0+\sum_{i=1}^{M}U_i(\mathcal L^{E_i}-\mathcal L^0),\quad \forall U\in\mathbb R^{M}.
\end{equation}

\section{Data-driven control system based on Koopman operator }\label{s:DD-control}
This section derives Koopman-based error bounds for data-driven control systems. We use notations as in \cite{bold2025data, strasser2024koopman}.

Let $\mathbb V=\mathrm{span}\{\psi_k\mid 0\le k\le N\}$ be an $N$-dimensional dictionary subspace spanned by observables $\psi_k\in W^{1,2}(\mathbf{\Omega})$. $P_{\mathbb V}: L^2(\mathbf{\Omega})\to\mathbb V$ denotes the $L^2$ orthogonal projection onto $\mathbb V$. Assume $d$ independent and identically distributed (i.i.d.) data points $\omega_1,\cdots, \omega_d\in \mathbf{\Omega}$, define the $N\times d$-matrices
\begin{equation}\label{e:X_0}
X^0:=\left[\begin{array}{ccc}
           \psi_0(\omega_1) & \cdots & \psi_0(\omega_d) \\
           \vdots & \vdots & \vdots \\
           \psi_N(\omega_1) & \cdots & \psi_N(\omega_d)
         \end{array}
\right],
\end{equation}
\begin{equation}\label{e:Y_0}
Y^0:=\left[\begin{array}{ccc}
           \mathcal L^0\psi_0(\omega_1) & \cdots & \mathcal L^0\psi_0(\omega_d) \\
           \vdots & \vdots & \vdots \\
           \mathcal L^0\psi_N(\omega_1) & \cdots & \mathcal L^0\psi_N(\omega_d)
         \end{array}
\right],
\end{equation}
where $(\mathcal L^0\psi_k)(\omega_j)=(F\cdot\nabla \psi_k)(\omega_j)$ for $k\in [0:N]$ and $j\in [1:d]$. Define EDMD estimator of $\mathcal L^0$ as
\begin{equation*}
\mathcal L^0_d := {\rm arg} \min_{L\in \mathbb R^{N\times N}}\|LX^0-Y^0\|_F^2,
\end{equation*}
where $\mathbb R^{N\times N}$ denotes the set of real matrices of form $N\times N$, $\|\cdot\|_F$ is the Frobenius norm of a matrix.

Let
$
(\mathcal L^{E_i}\psi_k)(\omega_j)=\nabla \psi_k(\omega_j)^T(F(\omega_j)+G(\omega_j)E_i).
$
Define EDMD estimator of $\mathcal L^{E_i}$ as
\begin{equation*}
\mathcal L^{E_i}_d := {\rm arg} \min_{L\in \mathbb R^{N\times N}}\|LX^{E_i}-Y^{E_i}\|_F^2,
\end{equation*}
where
\begin{equation}\label{e:X_i}
X^{E_i}:=\left[\begin{array}{ccc}
           \psi_0(\omega_1) & \cdots & \psi_0(\omega_d) \\
           \vdots & \vdots & \vdots \\
           \psi_N(\omega_1) & \cdots & \psi_N(\omega_d)
         \end{array}
\right],
\end{equation}
\begin{equation}\label{e:Y_i}
Y^{E_i}:=\left[\begin{array}{ccc}
           \mathcal L^{E_i}\psi_0(\omega_1) & \cdots & \mathcal L^{E_i}\psi_0(\omega_d) \\
           \vdots & \vdots & \vdots \\
           \mathcal L^{E_i}\psi_N(\omega_1) & \cdots & \mathcal L^{E_i}\psi_N(\omega_d)
         \end{array}
\right].
\end{equation}

In the following, we choose a fixed dictionary of form
$
\Phi(\xi)=[1,\xi^T,\phi_{J+1},\cdots,\phi_N(\xi)]^T,
$
where $\phi_0\equiv 1$, $\phi_k(\xi)=\xi_k$, $k\in [1:J]$, and $\phi_k\in C^2(\mathbf{\bar\Omega},\mathbb R)$ satisfy $\phi_k(0)=0$, $k\in [n+1: N]$. Define the dictionary as $\mathbb V:={\rm span}\{\phi_k, k\in[0:N]\}$. We assume
$
\|\Phi(\xi)-\Phi(0)\|\le L_\Phi\|\xi\|, \quad \forall \xi\in \mathbf{\Omega},
$
where $L_{\Phi}$ is constant depending only on $\Phi$.

\subsection{Special case: $\mathbb V$ is invariant under $\mathcal K^t_U$}

First, let us consider the case that $\mathbb V$ is invariant with respect to the Koopman operator $\mathcal K^t_U$. The general case will be discussed in Section \ref{s:general}.

\textbf{Assumption 1} (Invariance of the dictionary $\mathbb V$). For any $\phi\in \mathbb V$, it holds that $\phi(\xi(t;\cdot,U))\in \mathbb V$ for all $U(t)\equiv U\in \mathbb U $  and $t\ge 0$, that is, $\mathcal K_U^t \mathbb V\subset \mathbb V$.

\begin{remark}
Assumption 1 is equivalent to that for all $U\in \mathbb U$,
$
P_{\mathbb V}\mathcal L^U|_{\mathbb V}=\mathcal L^U|_{\mathbb V},
$
\end{remark}

Since the space $\mathbb V$ is finite dimensional, under Assumption 1, the Lie operator restricted on $\mathbb V$ can be represented as a matrix. Specifically,
noting that $\phi_0(\xi)\equiv 1$,
$
\frac{d}{dt}\phi_0(\xi(t;\cdot,U))\equiv 0.
$
Using $F(0)=0$, we have that
$
(\mathcal L^0 \phi_i)(0)=\nabla \phi_i(0)\cdot F(0)=0.
$
Hence $(\mathcal L^0 \phi_i)(\xi)=0 \phi_0+a_{i1}\phi_1(\xi)+\cdots + a_{iN}\phi_N(\xi)$.  That is, on the basis $\Phi(\xi)=[1,\phi_1,\cdots,\phi_N]$,
\begin{eqnarray}\label{e:L0}
\mathcal L^0= \left[
                \begin{array}{cc}
                  0 & 0_{1\times N} \\
                  0_{N\times 1} & \mathcal L^0_{22} \\
                \end{array}
              \right],
\end{eqnarray}
where $\mathcal L^0_{22}\in \mathbb R^{N\times N}$. Similarly, represent $\mathcal L^{E_i}$, $i\in [1:M]$, as
\begin{eqnarray}\label{e:Lei}
\mathcal L^{E_i}=\left[
                \begin{array}{cc}
                  0 & 0_{1\times N} \\
                  \mathcal L^{E_i}_{21} & \mathcal L^{E_i}_{22} \\
                \end{array}
              \right],
\end{eqnarray}
where $\mathcal L^{E_i}_{21}\in \mathbb R^{N\times 1}$ and $\mathcal L^{E_i}_{22}\in \mathbb R^{N\times N}$.

We adopt EDMD to construct a data-driven approximation $\mathcal L_d^U$ of the Lie operator $\mathcal L^U$ \cite{strasser2024koopman}.
For each constant input $\bar U\in\{0,E_1,\dots,E_M\}$ with $d^{\bar U}$ samples in $\mathbf{\Omega}$, we build the dictionary matrix $X^{\bar U}$ and the corresponding Lie-derivative matrix $Y^{\bar U}$ from the basis $\Phi(\xi)$. The EDMD estimate of $P_{\mathbb V}\mathcal L^{\bar U}|_{\mathbb V}$ is obtained via Frobenius-norm least-squares fitting, yielding the block-structured forms
\begin{eqnarray}\label{e:EDMD-Lu}
\mathcal L^0_d=\begin{bmatrix}0 & 0_{1\times N} \\ 0_{N\times 1} & S\end{bmatrix},\quad \mathcal L^{E_i}_d=\begin{bmatrix}0 & 0_{1\times N} \\ B_{0,i} & \hat B_i\end{bmatrix},
\end{eqnarray}
where $i\in [1:M]$, $S=Y^0 (X^0)^{\dag}$ and $[B_{0,i},\hat B_i]=Y^{E_i}(X^{E_i})^{\dag}$ are closed-form pseudo-inverse solutions. Using \eqref{e:lie-op-Lu}, the control-dependent operator approximation is given by
\begin{equation}\label{e:lie-op-app-2}
\mathcal L^U_d=\mathcal L^0_d+\sum_{i=1}^{M}U_i(\mathcal L^{E_i}_d-\mathcal L^0_d).
\end{equation}

The finite-sample error bound is:
\begin{theorem}[\cite{schaller2023towards}]\label{t:app-l-ld}
Under Assumption 1 with i.i.d. samples, for any error tolerance $c_r>0$ and confidence level $1-\delta$, there exists a sample size threshold $d_0=O(1/(c_r^2\delta))$ such that for all $d\ge d_0$ and $U\in\mathbb U$,
\begin{eqnarray}\label{e:error-data}
\|\mathcal L^U|_{\mathbb V}-\mathcal L^U_d\|\le \eta
\end{eqnarray}
holds with probability $1-\delta$.
\end{theorem}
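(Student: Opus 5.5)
The plan is to reduce the statement to finite-sample concentration of empirical Gram matrices, one for each of the $M+1$ constant inputs $\bar U\in\{0,E_1,\dots,E_M\}$, and then to pass to general $U$ through the affine structure \eqref{e:lie-op-app-2}. Here $\eta$ stands for the tolerance $c_r$ (up to a factor depending on $\mathbb U$).

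\textbf{Exact representation of the restricted operators.} Fix $\bar U$ with $d$ i.i.d. samples drawn from a measure $\mu$ on $\mathbf{\Omega}$. Define the empirical and population Gram-type matrices
\[
C_d=\tfrac1d X^{\bar U}(X^{\bar U})^T,\qquad A_d=\tfrac1d Y^{\bar U}(X^{\bar U})^T,
\]
\[
C=\int_{\mathbf{\Omega}} \Phi\Phi^T\,d\mu,\qquad A=\int_{\mathbf{\Omega}} (\mathcal L^{\bar U}\Phi)\Phi^T\,d\mu .
\]
Assume the dictionary is linearly independent in $L^2(\mu)$, so $C$ is invertible with $\lambda_{\min}(C)>0$. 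Under Assumption 1, $\mathcal L^{\bar U}\Phi=L^{\bar U}\Phi$ pointwise, where $L^{\bar U}$ is the exact matrix $\mathcal L^{\bar U}|_{\mathbb V}$. This gives $A=L^{\bar U}C$.

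Moreover, the pseudo-inverse solution satisfies $\mathcal L^{\bar U}_d=A_dC_d^{-1}$ whenever $C_d$ is invertible. On that event the identity $Y^{\bar U}=L^{\bar U}X^{\bar U}$ already holds exactly, so $\mathcal L^{\bar U}_d=L^{\bar U}$. Thus in the invariant case the error comes only from $C_d$ failing to be invertible.

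\textbf{Concentration and invertibility.} The entries of $C_d$ are averages of i.i.d. bounded variables $\phi_k\phi_l(\omega_j)$, bounded because $\phi_k\in C^2(\bar{\mathbf\Omega})$. Chebyshev's inequality, with a union bound over the $(N+1)^2$ entries and over the $M+1$ inputs, gives
\[
\|C_d-C\|_F\le \epsilon \quad\text{with probability at least } 1-\delta/2 \quad\text{once } d\gtrsim \frac{(M+1)(N+1)^2\sigma^2}{\epsilon^2\delta}.
\]
The same argument applied to $A_d$ gives $\|A_d-A\|_F\le\epsilon$. Taking $\epsilon<\lambda_{\min}(C)/2$ makes $C_d$ invertible with $\|C_d^{-1}\|\le 2/\lambda_{\min}(C)$.

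For robustness, or if one prefers not to rely on the exact-data identity, write
\[
A_dC_d^{-1}-AC^{-1}=(A_d-A)C_d^{-1}+AC_d^{-1}(C-C_d)C^{-1}.
\]
This yields $\|\mathcal L^{\bar U}_d-L^{\bar U}\|\le K\epsilon$, with $K$ depending on $\|A\|$ and $\lambda_{\min}(C)$. Choosing $\epsilon=c_r/K'$ gives $d_0=O(1/(c_r^2\delta))$.

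\textbf{Extension to all $U\in\mathbb U$.} Using \eqref{e:lie-op-Lu} and \eqref{e:lie-op-app-2},
\[
\|\mathcal L^U|_{\mathbb V}-\mathcal L^U_d\|\le \Big(1+\sum_i|U_i|\Big)\|\mathcal L^0-\mathcal L^0_d\|+\sum_i|U_i|\,\|\mathcal L^{E_i}-\mathcal L^{E_i}_d\|.
\]
Since $\mathbb U$ is bounded (or the statement is restricted to a bounded set of inputs), $\sup_{U\in\mathbb U}(1+2\sum_i|U_i|)$ is finite. Rescaling $c_r$ by this constant gives the bound uniformly in $U$, on a single event of probability at least $1-\delta$.

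The main obstacle is the invertibility and conditioning of $C_d$. It requires $\mu$-linear independence of the dictionary and a lower bound on $\lambda_{\min}(C)$ that is uniform across the $M+1$ input datasets. These constants enter $d_0$ only as fixed multipliers, so they do not affect the $O(1/(c_r^2\delta))$ rate.
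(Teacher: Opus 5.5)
Your argument is correct and is essentially the standard proof behind the result the paper imports from \cite{schaller2023towards} without proof. Its ingredients are Chebyshev-type concentration of the empirical Gram matrices $C_d$, $A_d$ for each constant input $\bar U\in\{0,E_1,\dots,E_M\}$, the perturbation identity for $A_dC_d^{-1}-AC^{-1}$ on the event $\|C_d-C\|\le\lambda_{\min}(C)/2$, and the affine-in-$U$ extension over a bounded input set.

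Your extra observation is a valid sharpening. Under Assumption 1 with exact Lie-derivative data one has $Y^{\bar U}=L^{\bar U}X^{\bar U}$, so $\mathcal L^{\bar U}_d=\mathcal L^{\bar U}|_{\mathbb V}$ as soon as $X^{\bar U}$ has full row rank. The sample threshold then depends only on $\delta$ and $\lambda_{\min}(C)$, and by \eqref{e:lie-op-Lu} the error vanishes for every $U$ without needing $\mathbb U$ to be bounded.
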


To build a data-driven approximate control system, we analyze the projection error from the dictionary space $\mathbb V$ back to the state space $\mathbb R^J=\mathrm{span}\{\xi_1,\dots,\xi_J\}$. A preliminary error bound was given in \cite[Proposition 5]{strasser2024koopman}; we refine it for tighter state-projection estimates suited to optimal control analysis, yielding the following result.

\begin{prop}\label{p:app}
Under Assumption 1 with i.i.d. samples, for any $\delta\in(0,1)$ and $\eta>0$, there exists a sample threshold $d_0=O(\frac{1}{\eta^2\delta})$ such that for all $d\ge d_0$, system \eqref{e:system-g} admits the data-driven representation
\begin{eqnarray}
\dot \xi=\left(F^{\eta}(\xi) + \alpha(\xi)\right)+ \left(G^{\eta}(\xi)+\beta(\xi)\right)U,
\end{eqnarray}
where the residual terms satisfy
\begin{eqnarray}\label{e:f-g-error}
\|\alpha(\xi)\|\le \eta \|\xi\|,\quad \|\beta(\xi)\|\le \eta,\quad \forall \xi\in \mathbf{\Omega},
\end{eqnarray}
with probability $1-\delta$.
\end{prop}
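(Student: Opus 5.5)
The plan is to read the state dynamics off the rows of the Lie-operator matrices that correspond to the coordinate observables $\phi_k(\xi)=\xi_k$, $k\in[1:J]$. Under Assumption 1, $\mathcal L^U$ maps $\mathbb V$ into itself. Applying it to $\phi_k$ gives
\[
\dot\xi_k=\mathcal L^U\phi_k(\xi)=F_k(\xi)+G_k(\xi)U .
\]
Let $C=[0_{J\times 1}, I_J, 0_{J\times(N-J)}]$ be the matrix extracting the coordinate rows. Then the dictionary identity reads
\[
F(\xi)+G(\xi)U=C\,(\mathcal L^U|_{\mathbb V})^{T}\Phi(\xi),
\]
up to the transpose convention of the paper's representation. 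Analogously, I will define the data-driven fields $F^\eta(\xi):=C(\mathcal L^0_d)^T\Phi(\xi)$ and $G^\eta(\xi)E_i:=C(\mathcal L^{E_i}_d-\mathcal L^0_d)^T\Phi(\xi)$. By the affine structure \eqref{e:lie-op-Lu} and \eqref{e:lie-op-app-2}, this gives $F^\eta+G^\eta U=C(\mathcal L^U_d)^T\Phi$. The residuals are then the differences
\[
\alpha=C\,(\mathcal L^0|_{\mathbb V}-\mathcal L^0_d)^T\Phi,\qquad \beta E_i=C\,\big((\mathcal L^{E_i}-\mathcal L^0)|_{\mathbb V}-(\mathcal L^{E_i}_d-\mathcal L^0_d)\big)^T\Phi .
\]

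The key structural step is the block form \eqref{e:L0} and \eqref{e:EDMD-Lu}. Both $\mathcal L^0$ and $\mathcal L^0_d$ have zero first row and zero first column, so the error matrix $\mathcal L^0|_{\mathbb V}-\mathcal L^0_d$ annihilates the constant observable $\phi_0$. Therefore $\alpha(\xi)$ involves only $\Phi(\xi)-\Phi(0)=[0,\xi^T,\phi_{J+1},\dots,\phi_N]^T$, since $\phi_k(0)=0$ for $k>J$. This gives
\[
\|\alpha(\xi)\|\le \|\mathcal L^0|_{\mathbb V}-\mathcal L^0_d\|\,\|\Phi(\xi)-\Phi(0)\|\le L_\Phi\,\|\mathcal L^0|_{\mathbb V}-\mathcal L^0_d\|\,\|\xi\| .
\]
This linear-in-$\|\xi\|$ bound is precisely the refinement over \cite[Proposition 5]{strasser2024koopman}. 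For $\beta$, the $\mathcal L^{E_i}$ blocks contain the nonzero column $\mathcal L^{E_i}_{21}$, so no vanishing at the origin is available. Instead I will use boundedness of $\mathbf\Omega$, that is $\sup_{\xi\in\mathbf\Omega}\|\Phi(\xi)\|\le C_\Phi<\infty$ by continuity of the $\phi_k$ on $\bar{\mathbf\Omega}$. Combined with the triangle inequality over $i\in[1:M]$, this gives $\|\beta(\xi)\|\le C_\Phi\sqrt M\,(\|\mathcal L^{E_i}|_{\mathbb V}-\mathcal L^{E_i}_d\|+\|\mathcal L^0|_{\mathbb V}-\mathcal L^0_d\|)$, summed appropriately.

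Finally, I will invoke Theorem \ref{t:app-l-ld} with tolerance $\eta'=\eta/\max\{L_\Phi,\,2\sqrt M C_\Phi\}$. It is applied to each of the $M+1$ inputs $0,E_1,\dots,E_M$, with confidence $\delta/(M+1)$ each, and a union bound then gives simultaneous validity with probability $1-\delta$. The resulting threshold is $d_0=O((M+1)/(\eta'^2\delta))=O(1/(\eta^2\delta))$, with constants depending only on $\Phi$, $M$ and $\mathbf\Omega$.

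The main obstacle I anticipate is the bookkeeping that guarantees the EDMD estimate $\mathcal L^0_d$ really has the exact zero first column, so that the error vanishes at $\xi=0$. Without this, $\alpha$ only satisfies $\|\alpha\|\le\eta$. I would justify it from the least-squares structure: the row of $Y^0$ for $\phi_0$ is identically zero, and the $\phi_k$ rows vanish at the equilibrium. If exact zeros are not automatic, I would impose the block structure \eqref{e:EDMD-Lu} as a constrained least-squares problem, which the paper's stated form already presupposes, and then check that Theorem \ref{t:app-l-ld} still applies to the constrained estimator.
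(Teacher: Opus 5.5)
Your proposal is correct and follows essentially the same route as the paper. Both extract the coordinate rows of the block-structured Lie matrices, use the vanishing first column of $\mathcal L^0-\mathcal L^0_d$ together with $\|\Phi(\xi)-\Phi(0)\|\le L_\Phi\|\xi\|$ for $\alpha$, bound $\beta$ through boundedness of $\mathbf{\Omega}$, and invoke Theorem~\ref{t:app-l-ld} with a rescaled tolerance; your union bound over the $M+1$ inputs is only extra care. The obstacle you flag is resolved more simply than you suggest. Under Assumption 1 with exact Lie-derivative data, $Y^0=\mathcal L^0X^0$, so once $X^0$ has full row rank the least-squares estimate reproduces the block form \eqref{e:L0} exactly, which the paper uses silently in \eqref{e:EDMD-Lu}. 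Vanishing at the equilibrium alone would not give this, since least squares does not enforce a fit at an unsampled point.
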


\begin{proof}
We first compute
$
\frac{d}{dt}\Phi(\xi(t))=\mathcal L^U\Phi(\xi(t)),
$
where $U\in \mathbb U$.
Let $\hat \Phi(\xi)=[0_{N\times 1}, I_N]\Phi(\xi)$. Then by \eqref{e:L0} and \eqref{e:Lei}, we have that, for $i\in[1:M]$,
\begin{eqnarray}
\mathcal L^0 \Phi(\xi)=\left[
                       \begin{array}{c}
                         0\\
                         \mathcal L^0_{22}\hat\Phi(\xi) \\
                       \end{array}
                     \right], \quad \mathcal L^{E_i} \Phi(\xi)=\left[
                       \begin{array}{c}
                         0\\
                         \mathcal L^{E_i}_{21}+\mathcal L^{E_i}_{22}\hat\Phi(\xi) \\
                       \end{array}
                     \right].\notag
\end{eqnarray}
Let $\mathcal L^E_{21}=[\mathcal L^{E_1}_{21},\cdots,\mathcal L^{E_M}_{21}]$ and $\tilde{\mathcal L}^{E_i}_{22}=\mathcal L^{E_i}_{22}-\mathcal L^0_{22}$. Then, from \eqref{e:lie-op-Lu}, we obtain that
\begin{eqnarray}
\mathcal L^{U} \Phi(\xi)=\begin{bmatrix}
                         0\\
                         \mathcal L^0_{22}\hat\Phi(\xi)+ \mathcal L^{E}_{21}U+\sum_{i=1}^M U_i\tilde{\mathcal L}^{E_i}_{22}\hat\Phi(\xi) \\
                       \end{bmatrix}.\notag
\end{eqnarray}
From \eqref{e:EDMD-Lu}, the EDMD approximation has form
\begin{eqnarray}
\mathcal L^{U}_d \Phi(\xi)
                     =\begin{bmatrix}
                         0\\
                         S\hat\Phi(\xi) \\
                       \end{bmatrix}
                     +\begin{bmatrix}
                         0\\
                         B_0U+\sum_{i=1}^M U_i B_i\hat\Phi(\xi) \\
                       \end{bmatrix},\notag
\end{eqnarray}
where $B_0=[B_{0,1},\cdots, B_{0,M}]$ and $B_i=\hat B_i-S$. From the definition of $\Phi(\xi)$,  define
$
P_{\xi}:\mathbb R^N\to \mathbb R^J={\rm span}\{\xi_1,\cdots, \xi_J\}
$
to be the projection of vectors in $\mathbb V$ to the $[1:J]$ components. That is, $P_{\xi}\Phi(\xi)=\xi$. Then we define
\begin{eqnarray}\label{e:f-g-app}
F^{\eta}(\xi)=P_\xi
                       \begin{bmatrix}
                         0\\
                         S\hat\Phi(\xi) \\
                       \end{bmatrix},\quad
G^{\eta}(\xi)=P_\xi
                       \begin{bmatrix}
                         0\\
                         B_0+ B(\xi) \\
                       \end{bmatrix},
\end{eqnarray}
where $B(\xi)=[B_1\hat\Phi(\xi),\cdots, B_M\hat\Phi(\xi)]$.
Furthermore,
\begin{eqnarray}
&&(\mathcal L^U-\mathcal L^U_d)\Phi(\xi)=
     \begin{bmatrix}
       0 \\
       (\mathcal L^0_{22}-S)\hat\Phi(\xi) \\
     \end{bmatrix}\notag\\
&&~~~~+\begin{bmatrix}
                         0\\
                         (\mathcal L^{E}_{21}-B_0)U+\sum_{i=1}^M U_i(\tilde{\mathcal L}^{E_i}_{22}-B_i)\hat\Phi(\xi) \\
                       \end{bmatrix}.\notag
\end{eqnarray}
Then we have the errors
\begin{align*}
\alpha(\xi)&=F(\xi)-F^{\eta}(\xi)=P_\xi
     \begin{bmatrix}
       0 \\
       (\mathcal L^0_{22}-S)\hat\Phi(\xi), \\
     \end{bmatrix}\notag\\
\beta(\xi)U&= (G(\xi)-G^{\eta}(\xi))U\notag\\
&=P_\xi
                       \begin{bmatrix}
                         0\\
                         (\mathcal L^{E}_{21}-B_0)U+\sum_{i=1}^M U_i(\tilde{\mathcal L}^{E_i}_{22}-B_i)\hat\Phi(\xi) \\
                       \end{bmatrix}.\notag
\end{align*}
Using Theorem \ref{t:app-l-ld}, we get that, for any given $c_r>0$, there is a constant $d_0=O(\frac{1}{c_r^2\delta})$ such that
\begin{eqnarray}\label{e:l-error1}
&&\|(\mathcal L^0_{22}-S)\hat\Phi(\xi)\|\le c_rL_{\Phi}\|\xi\|,\\
&&\|(\mathcal L^{E}_{21}-B_0)U\|\le c_r\|U\|,\notag\\
&&\left\|\sum_{i=1}^M U_i(\tilde{\mathcal L}^{E_i}_{22}-B_i)\hat\Phi(\xi)\right\|\le c_rL_{\Phi}\|\xi\|\|U\|,\notag
\end{eqnarray}
for all $(\xi,U)\in \mathbf{\Omega}\times \mathbb U$ with probability $1-\delta$. Let $C_{\mathbf{\Omega}}$ be the diameter of the domain $\mathbf{\Omega}$.
Therefore, letting $c_r= \min\left\{\frac{\eta}{L_{\Phi}}, \frac{\eta}{2}, \frac{\eta}{2L_{\Phi}C_{\mathbf{\Omega}}}\right\}$, there is $d_0=O(\frac{1}{\eta^2\delta})$ such that for all $d\ge d_0$, \eqref{e:l-error1} and \eqref{e:f-g-error} hold.
\end{proof}

\subsection{General case: $\mathcal{L}^U \mathbb{V} \nsubseteq \mathbb{V}$}\label{s:general}
Assumption 1 requires $\mathbb{V}$ to be invariant under the control-dependent Koopman operator $\mathcal{K}_U^t$ for all $U\in\mathbb{U}$ and $t>0$. Under invariance, the EDMD approximation error of $\mathcal{L}^U$ stems solely from finite data; when invariance is violated, an additional projection error arises as $\mathcal{K}_U^t \phi$ cannot be exactly represented within $\mathbb{V}$.

Analogous to the discrete Koopman setting in \cite{haseli2023invariance}, we define an invariance index for the Lie operator $\mathcal{L}^U$ on a finite-dimensional subspace $\mathbb{V}\subset L^2(\mathbf{\Omega})$:
$
\mathcal{I}_{\mathcal L^U}(\mathbb{V}) = \sup_{\phi \in  \mathbb{V}, \|\mathcal L^U\phi\| \neq 0} \frac{\|\mathcal L^U\phi - P_{ \mathbb{V}} \mathcal L^U\phi\|}{\|\mathcal L^U\phi\|}.
$
By Theorem \ref{t:index-sin} below, which generalizes \cite[Theorem 5.1]{haseli2023invariance},
$
\mathcal{I}_{\mathcal L^U}(\mathbb{V}) = \sin\theta_{\text{max}},
$
with $\theta_{\text{max}}$ the largest Jordan principal angle between $\mathbb{V}$ and $\mathcal{L}^U\mathbb{V}$. The index quantifies the degree of invariance violation: $\mathcal{I}_{\mathcal L^U}(\mathbb{V})=0$ if and only if $\mathbb{V}$ is $\mathcal{L}^U$-invariant.

We now refine the error bounds in Theorem~\ref{t:app-l-ld} and Proposition~\ref{p:app} by incorporating the invariance violation error characterized by $\mathcal{I}_{\mathcal{L}^U}(\mathbb{V})$.
For Theorem~\ref{t:app-l-ld}, the total error under non-invariant dictionaries decomposes into projection error and statistical error:
\begin{eqnarray}\label{e:non-inv}
\|\mathcal{L}^U|_{\mathbb V} - \mathcal{L}_d^U\| \leq \underbrace{\|\mathcal{L}^U|_{\mathbb V} - \mathcal{P}_{\mathbb{V}} \mathcal{L}^U|_{\mathbb V}\|}_{\text{Projection Error}} + \underbrace{\|\mathcal{P}_{\mathbb{V}} \mathcal{L}^U|_{\mathbb V} - \mathcal{L}_d^U\|}_{\text{Statistical Error}}.
\end{eqnarray}
The projection error is bounded by the invariance index:
\begin{eqnarray}\label{e:proj-error}
\|\mathcal{L}^U|_{\mathbb V} - \mathcal{P}_{\mathbb{V}} \mathcal{L}^U|_{\mathbb V}\| \leq \|\mathcal{L}^U|_{\mathbb{V}}\| \cdot \mathcal{I}_{\mathcal{L}^U}(\mathbb{V}),
\end{eqnarray}
where $\|\mathcal{L}^U|_{\mathbb{V}}\|$ denotes the bounded restricted operator norm. Combined with the statistical error $c_r$, the refined bound reads
$
\|\mathcal{L}^U - \mathcal{L}_d^U\| \leq \|\mathcal{L}^U|_{\mathbb{V}}\| \cdot \mathcal{I}_{\mathcal{L}^U}(\mathbb{V}) + c_r.
$

For Proposition~\ref{p:app}, the state-space approximation errors $\alpha(\xi)$ and $\beta(\xi)$ similarly inherit projection error contributions. Decomposing the drift error as
\begin{eqnarray}
\alpha(\xi) = \underbrace{P_{\xi} \left[ (\mathcal{L}^U - \mathcal{P}_{\mathbb{V}} \mathcal{L}^U)\Phi(\xi) \right]}_{T_1} + \underbrace{P_{\xi} \left[ (\mathcal{P}_{\mathbb{V}} \mathcal{L}^U - \mathcal{L}_d^U)\Phi(\xi) \right]}_{T_2}.\notag
\end{eqnarray}
Since $\|T_1\|\leq \|\mathcal{L}^U|_{\mathbb{V}}\| \mathcal{I}_{\mathcal{L}^U}(\mathbb{V}) L_{\Phi} \|\xi\|$, it holds that
$
\|\alpha(\xi)\| \leq \left( \|\mathcal{L}^U|_{\mathbb{V}}\| \mathcal{I}_{\mathcal{L}^U}(\mathbb{V}) L_{\Phi} + \eta \right) \|\xi\|.
$
The control matrix error follows analogously:
$
\|\beta(\xi)\| \leq \|\mathcal{L}^U|_{\mathbb{V}}\| \mathcal{I}_{\mathcal{L}^U}(\mathbb{V}) + \eta.
$

\begin{remark}
Exact evaluation of $\|\mathcal{L}^U|_{\mathbb{V}}\|$ and $\mathcal{I}_{\mathcal{L}^U}(\mathbb{V})$ requires full knowledge of $\mathcal{L}^U$, which is infeasible for unknown systems. See \cite{conradie2026trustworthy} for related results.
Our strategy fixes a sufficiently large finite-dimensional dictionary $\tilde{\mathbb{V}}$, for which $\|\mathcal{L}^U|_{\tilde{\mathbb{V}}}\|$ is a well-defined finite constant. We then select a subspace $\mathbb{V} \subset \tilde{\mathbb{V}}$ to minimize the invariance index, which can be reliably estimated from trajectory data \cite{haseli2021learning}. Hence the projection error can be made sufficiently small on the approximate invariant $\mathbb{V}$ for real applications.
\end{remark}

\section{EDMD for augmented control system}\label{s:EDMD}

This work investigates data-driven optimal control for unknown control-affine systems
\begin{eqnarray}\label{e:control}
\dot{x} = f(x) + g(x) u,
\end{eqnarray}
where $f\in C^2(\Omega)$ with $f(0)=0$ and $g=[g_1,\dots,g_m]$ $\in C^2(\Omega;\mathbb{R}^{n\times m})$ are unknown vector fields, and $u\in L_{\rm loc}^\infty([0,\infty),\mathbb{R}^m)$ is the control input.

To derive rigorous EDMD error bounds for $\nabla f(x)$ and $\nabla g(x)$, we differentiate system \eqref{e:control} with respect to time under constant control $\bar{u}=[\bar u_1,\cdots, \bar u_m]$, giving
\begin{eqnarray}\label{e:derivative-system}
\ddot{x} = \nabla f(x) \dot{x} + \sum_{i=1}^m \nabla g_i(x) \dot{x}\, \bar{u}_i.
\end{eqnarray}
Introducing the augmented state $y=\dot{x}$ and auxiliary control $v$, we obtain the following control-affine augmented system:
\begin{subequations}\label{e:augment-system}
\begin{eqnarray}
\dot{x} &=& f(x) + g(x) u, \label{e:augment-system-1}\\
\dot{y} &=& \nabla f(x) y + \sum_{i=1}^m \nabla g_i(x) y\, v_i.\label{e:augment-system-2}
\end{eqnarray}
\end{subequations}

In compact form, the augmented system reads
\begin{eqnarray}\label{e:augment-system2}
\dot{\xi} = F(\xi) + G(\xi) U,
\end{eqnarray}
with $\xi=[x^\top,y^\top]^\top$, $U=[u^\top,v^\top]^\top$, and
\begin{eqnarray}\label{e:aug-field}
F(\xi) = \begin{bmatrix} f(x) \\ \nabla f(x) y \end{bmatrix}, \quad G(\xi) = \begin{bmatrix} g(x) & 0 \\ 0 & \nabla g(x) y \end{bmatrix},
\end{eqnarray}
where $\nabla g(x) y \triangleq (\nabla g_1(x)y,\dots, \nabla g_m(x)y)$.

\subsection{Proper observable selection}
We apply the Koopman-based EDMD method to approximate the augmented system \eqref{e:augment-system2}. To exploit the linear dependence of \eqref{e:augment-system-2} on $y$, we first construct observables of the form
$
\Psi(x,y)
= \left[1, x^\top, \phi_1(x), \dots, \phi_N(x), y^\top, \psi_1(x,y), \dots, \psi_K(x,y)\right],
$
and establish the following local existence result.

\begin{prop}[Local existence of linear-in-$y$ observables]\label{p:linear-obs}
For any $\hat{x} \in \Omega$ and $\hat{v} \in \tilde{\mathbb{U}}$, there exist neighborhoods $\mathbb{B}_\rho(\hat{x}) \subset \Omega$, $\mathbb{B}_\rho(\hat{v}) \subset \tilde{\mathbb{U}}$, and observables $\Upsilon(x,y) = \{\psi_1, \dots, \psi_K\}$ on $\mathbb{B}_\rho(\hat{x}) \times \mathbb{R}^n$ such that:
    (1) Each $\psi_j(x,y)$ is linear in $y$;
    (2) $\mathbb{V}_y = \operatorname{span}\{\psi_1, \dots, \psi_K\}$ is invariant under the flow of \eqref{e:augment-system-2} for all $x \in \mathbb{B}_\rho(\hat{x})$, $v \in \mathbb{B}_\rho(\hat{v})$.
\end{prop}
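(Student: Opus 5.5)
The plan is to read the $y$-equation \eqref{e:augment-system-2} as a linear time-varying system in $y$ whose coefficient matrix $A(x,v):=\nabla f(x)+\sum_{i=1}^m v_i\nabla g_i(x)$ depends only on $x$ and $v$. For any observable $\psi(x,y)=c(x)^\top y$ that is linear in $y$, the Lie derivative along the $y$-component is
\[
\nabla_y\psi\cdot \dot y=c(x)^\top A(x,v)\,y ,
\]
which is again linear in $y$. The natural candidate for the invariant family is therefore
\[
\Upsilon=\{\,e_k^\top y,\ k\in[1:n]\,\}\cup\{\,(\text{entries of } \Theta(x)\, y)\,\},
\]
where the rows of $\Theta$ are built from the finitely many coefficient functions produced by this closure.

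First I would freeze the base point. The coefficients $a_{kl}(x,v)$ of $A$ are $C^1$ on $\Omega\times\tilde{\mathbb U}$, because $f,g\in C^2$. On a small ball $\mathbb B_\rho(\hat x)\times\mathbb B_\rho(\hat v)$ I would write
\[
A(x,v)=A(\hat x,\hat v)+R(x,v),\qquad \|R\|\le C\rho .
\]
The coordinate observables $y_1,\dots,y_n$ are linear in $y$. The span $\{y_1,\dots,y_n\}$ is exactly invariant under the $y$-flow whenever the coefficient functions $a_{kl}(x,v)$ are admitted as scalar multipliers, that is, when we work over the ring of functions of $(x,v)$. In that setting $\frac{d}{dt}y=A(x,v)y$ shows directly that each $\frac{d}{dt}y_k$ lies in the module spanned by $y_1,\dots,y_n$.

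To obtain genuinely finite-dimensional real invariance as in Assumption~1, I would instead take
\[
\psi_{k,l}(x,y)=\chi_l(x)\,y_k ,
\]
where $\{\chi_l\}$ is a finite family of functions of $x$ containing $1$ and the entries of $\nabla f,\nabla g_i$. The key steps are then:
\begin{enumerate}
\item[(a)] Compute $\mathcal L\psi_{k,l}=\chi_l\,(Ay)_k+(\nabla\chi_l\cdot \dot x)\,y_k$, using the full augmented generator \eqref{e:augment-system2} with $x$ restricted to the ball.
\item[(b)] Note that this remains linear in $y$, with coefficients that are products of $\chi$'s and derivatives of $f,g$.
\item[(c)] Close the family by a finite-dimensional argument. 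Locally one may choose the $\chi_l$ as a basis of a finite-dimensional function space $\mathcal F$ that is closed under multiplication by the $a_{kl}$ and under $x$-differentiation along $f+gu$. Restricted to the ball, such an $\mathcal F$ is available via Taylor expansion, for example polynomials truncated at a finite degree, with the remainder absorbed by shrinking $\rho$.
\end{enumerate}
Linearity in $v$ is handled by \eqref{e:lie-op-Lu}: it suffices to check invariance for $v=0$ and $v=E_i$.

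The main obstacle is step (c). Exact closure of $\mathcal F$ under multiplication and differentiation generally fails for arbitrary $C^2$ data; truncated polynomials, for instance, are not closed under multiplication. The first fallback is to interpret ``invariant'' in the frozen-coefficient sense. Restricted to $x$ in the ball and treating $x$ as a parameter, the $y$-subsystem with $x$ frozen is linear, so $\mathbb V_y=\operatorname{span}\{y_1,\dots,y_n\}$ is exactly invariant under $\dot y=A(x,v)y$ for every fixed $(x,v)\in\mathbb B_\rho(\hat x)\times\mathbb B_\rho(\hat v)$. In this sense items (1) and (2) hold with $K=n$ and $\psi_k=y_k$, and the neighborhoods are used only to guarantee $C^1$-boundedness of $A$ uniformly in $(x,v)$. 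This frozen-coefficient version appears to be what the statement intends, and I would present it first. The enlarged family $\chi_l(x)y_k$ would then serve only as the mechanism for approximating the $x$-dependence, with its error controlled by the invariance index of Section~\ref{s:general}.
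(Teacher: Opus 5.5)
Your fallback argument is correct, and it uses the same reading of ``invariant'' as the paper: the paper's proof also treats \eqref{e:augment-system-2} as a linear ODE in $y$ with $(x,v)$ frozen as parameters. Under that reading, taking $K=n$ and $\psi_k=y_k$ settles the statement, because a linear flow $\dot y=A(x,v)y$ maps linear functionals of $y$ to linear functionals of $y$.

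The paper reaches the same conclusion with a different choice of basis. It sets $\psi_j^k(y;x,v)=\langle y,w_j^k(x,v)\rangle$, with $\{w_j^k\}$ a basis of the generalized eigenspaces $V_j$ of $A^*(x,v)$, and gets invariance from $A^*V_j\subset V_j$. It then uses Riesz projections (Kato) and Gram--Schmidt to make the $w_j^k$ depend smoothly on $(x,v)$ near $(\hat x,\hat v)$; this is the only reason neighborhoods appear.

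What the spectral construction buys is finer structure: each block $\operatorname{span}\{\psi_j^1,\dots,\psi_j^{n_j}\}$ is separately invariant, so one could take $K<n$, adapted to the modes of $A$. But the proposition as proved recombines all blocks into $n$ independent functionals. At each frozen $(x,v)$ their span is all linear functionals of $y$, i.e.\ exactly $\operatorname{span}\{y_1,\dots,y_n\}$.

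Your choice has several advantages over that route:
\begin{itemize}
\item It needs no perturbation theory.
\item It holds globally on $\Omega\times\tilde{\mathbb U}$, so the neighborhoods are not needed, not even for the uniform bounds you mention.
\item Your observables do not depend on $v$.
\item It avoids points the paper's construction glosses over: eigenvalue clusters splitting under perturbation, and complex $w_j^k$ for complex eigenvalues.
\end{itemize}

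Two cautions. First, drop steps (a)--(c) rather than presenting them as a supplement: truncated Taylor remainders cannot be turned into exact invariance by shrinking $\rho$. Second, neither your argument nor the paper's gives invariance under the coupled flow in which $x$ also evolves. That would require handling the extra term $\nabla_x\psi\cdot\dot x$ and the fact that the entries of $A(x,v)$ are not constant. This stronger invariance is what Lemma~\ref{l:v-y-invariant} actually needs, and the paper supplies it by assumption (Assumption 1 for $\tilde{\mathbb V}$).
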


\begin{proof}
We prove the result by following steps.

1. Treat system \eqref{e:augment-system-2} as a linear ODE in $y$ with parameters $x,v$. For any $(\hat{x},\hat{v})\in\Omega\times\tilde{\mathbb{U}}$, write
$
\dot{y} = A(\hat{x}, \hat{v}) y$, with $ A(\hat{x}, \hat{v}) = \nabla f(\hat{x}) + \sum_{i=1}^m \nabla g_i(\hat{x}) \hat{v}_i.
$
Let $\lambda_j(\hat{x},\hat{v})$ be eigenvalues of $A(\hat{x},\hat{v})$, $V_j(\hat{x},\hat{v})$ the generalized eigenspace of $A^*(\hat{x},\hat{v})$ associated with $\bar{\lambda}_j(\hat{x},\hat{v})$, and $\{w_j^k(\hat{x},\hat{v})\}_{k=1}^{n_j}$ an orthonormal basis of $V_j(\hat{x},\hat{v})$.
Define observables
$
\psi_j^{k_j}(y; \hat{x},\hat{v}) = \left\langle y, w_j^{k_j}(\hat{x},\hat{v}) \right\rangle, k_j = 1, \dots, n_j.
$
A direct computation shows that
\begin{align}
&\dot{\psi}_j^{k_j}(y; \hat{x},\hat{v})
= \left\langle \dot{y}, w_j^{k_j}(\hat{x},\hat{v}) \right\rangle
= \left\langle A(\hat{x},\hat{v}) y, w_j^{k_j}(\hat{x},\hat{v}) \right\rangle\notag\\
&= \left\langle y, A^*(\hat{x},\hat{v}) w_j^{k_j}(\hat{x},\hat{v}) \right\rangle \notag \\
&= \left\langle y, \begin{bmatrix} w_j^1(\hat{x},\hat{v}) & \cdots & w_j^{n_j}(\hat{x},\hat{v}) \end{bmatrix} B_j^*(\hat{x},\hat{v}) e_{k_j} \right\rangle \notag \\
&= \begin{bmatrix} \psi_j^1(y; \hat{x},\hat{v}) & \cdots & \psi_j^{n_j}(y; \hat{x},\hat{v}) \end{bmatrix} B_j^*(\hat{x},\hat{v}) e_{k_j},
\end{align}
where $e_{k_j}$ denotes the $k_j$-th standard basis vector in $\mathbb{R}^{n_j}$. Hence
$\operatorname{span}\left\{ \psi_j^{k_j}(y; \hat{x},\hat{v}) \mid k_j = 1, \dots, n_j \right\}$
is invariant under the flow of \eqref{e:augment-system-2} for fixed $(\hat{x},\hat{v})$.

2.
Let $P_j(\hat{x},\hat{v})$ denote the orthogonal projection onto $V_j(\hat{x},\hat{v})$. By the integral representation of spectral projections \cite[Ch. II.1]{Kato1995Perturbation},
$
P_j(\hat{x},\hat{v}) = -\frac{1}{2\pi i} \int_{\Gamma_j} \left( A^*(\hat{x},\hat{v}) - \zeta I \right)^{-1} d\zeta,
$
where $\Gamma_j\subset\mathbb{C}$ is a small counterclockwise circle centered at $\lambda_j(\hat{x},\hat{v})$. Differentiable dependence of $A^*$ on $(\hat{x},\hat{v})$ implies differentiable dependence of $P_j$ on $(\hat{x},\hat{v})$ \cite[Thm. II.5.4]{Kato1995Perturbation}.

3.
To extend the construction to a neighborhood of $(\hat{x},\hat{v})$, let $e_l$ be the $l$-th standard basis vector of $\mathbb{R}^n$. For each $j$, the set $\mathcal{V}_j(x,v) = \{P_j(x,v)e_1,\dots,P_j(x,v)e_n\}$.
Each vector $P_j(\hat{x},\hat{v}) e_l$ depends differentiably on $(\hat{x},\hat{v})$. Since $\operatorname{span}\mathcal{V}_j(\hat{x},\hat{v}) = V_j(\hat{x},\hat{v})$, the vectors in $\mathcal{V}_j(\hat{x},\hat{v})$ have rank $n_j$. Therefore, there exists a small neighborhood $\mathcal{N}(\hat{x},\hat{v})$ of $(\hat{x},\hat{v})$ and indices $k_1^j, \dots, k_{n_j}^j$ such that $\left\{ P_j(x,v) e_{k_1^j}, \dots, P_j(x,v) e_{k_{n_j}^j} \right\}$ forms a basis for $V_j(x,v)$ for all $(x,v) \in \mathcal{N}(\hat{x},\hat{v})$.

Gram--Schmidt orthonormalization yields a smoothly varying orthonormal basis $\{w_j^k(x,v)\}_{k=1}^{n_j}$ for $V_j(x,v)$ on $\mathcal{N}(\hat{x},\hat{v})$. Combining bases across all generalized eigenspaces gives a smooth orthonormal basis for $\mathbb{R}^n$.
Defining
$
\psi_j^{k_j}(y; x,v) = \left\langle y, w_j^{k_j}(x,v) \right\rangle,  k_j \in [1: n_j], j \in [1: r],\notag
$
with $r$ the number of distinct eigenvalues of $A(x,v)$, we obtain $n$ linearly independent observables satisfying both conditions of the proposition on $\mathcal{N}(\hat{x},\hat{v})$. This completes the proof.
\end{proof}

Proposition \ref{p:linear-obs} yields locally $y$-linear invariant observables for the $y$-subsystem. Global dictionary invariance over $\Omega\times \tilde{\mathbb U}$ demands topology dependent compatibility conditions outside our scope; we herein assume the aforementioned observables exist on $\Omega\times \tilde{\mathbb U}$.

Define the augmented dictionary space
\begin{equation}\label{e:observables}
\tilde{\mathbb{V}} = \operatorname{span}\left\{ 1, x^\top, \phi_{n+1}(x), \dots, \phi_N(x),\right.
\left. y^\top, \psi_{n+1}(x,y), \dots, \psi_K(x,y) \right\},
\end{equation}
where
$
\psi_k(x,y) = \sum_{j=1}^n a_{kj}(x) y_j:=a_k(x)^\top y,
$
with smooth functions $a_k(x)=(a_{k1}(x),\cdots, a_{kn}(x))^{\top}$ defined on $\Omega$. We set $\phi_i(x)=x_i$, $\psi_j(x,y)=y_j$ for $i,j=1,\dots,n$.
Let $L^2_Y(\Omega)$ denote the function space of the form $\phi(x)y_i^\iota$ ($\iota=0,1$) over the symmetric domain $Y_a=[-a,a]^n$, so $L^2_Y(\Omega)=L^2(\Omega)\oplus \bigoplus_{i=1}^n L^2(\Omega)y_i$. Decompose the dictionary as
\begin{align}\label{e:vx-vy}
\mathbb{V}_x& = \operatorname{span}\{1, x^\top, \phi_1(x), \dots, \phi_N(x)\},\\
\mathbb{V}_y &= \operatorname{span}\{\psi_1(x,y), \dots, \psi_K(x,y)\},\notag
\end{align}
hence $\tilde{\mathbb{V}} = \mathbb{V}_x \oplus \mathbb{V}_y$. The $0$-symmetry of $Y_a$ implies the following orthogonality property.

\begin{lemma}
$\mathbb{V}_x$ and $\mathbb{V}_y$ are orthogonal in $L^2_Y(\Omega)$.
\end{lemma}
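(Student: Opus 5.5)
The plan is to use the $L^2$ inner product on the product domain $\Omega\times Y_a$ with $Y_a=[-a,a]^n$, namely $\langle \varphi,\chi\rangle=\int_{\Omega}\int_{Y_a}\varphi(x,y)\chi(x,y)\,dy\,dx$. By bilinearity, orthogonality of the two spans reduces to orthogonality of their spanning functions. So it suffices to show that every element $\phi\in\{1,x_1,\dots,x_n,\phi_1,\dots,\phi_N\}$ of $\mathbb V_x$ is orthogonal to every generator $\psi_k(x,y)=a_k(x)^\top y$ of $\mathbb V_y$. This includes the coordinate observables $\psi_j=y_j$, which correspond to $a_k=E_j$.

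For such a pair we have
\begin{equation*}
\langle \phi,\psi_k\rangle=\sum_{j=1}^n\int_\Omega \phi(x)a_{kj}(x)\left(\int_{Y_a} y_j\,dy\right)dx.
\end{equation*}
The inner integral factorizes over the coordinates as $\int_{-a}^a y_j\,dy_j\cdot(2a)^{n-1}$. The first factor vanishes because $y_j$ is odd and $[-a,a]$ is symmetric about $0$, so each summand is zero. Extending by linearity to arbitrary elements of $\mathbb V_x$ and $\mathbb V_y$ then gives the lemma.

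The only step requiring care is justifying Fubini, i.e.\ integrability of $\phi(x)a_{kj}(x)y_j$ on $\Omega\times Y_a$. This follows because $\phi\in L^2(\Omega)$, because $a_{kj}$ is smooth on $\bar\Omega$ (I will assume it is bounded there), and because $Y_a$ is bounded, so the integrand lies in $L^1$ by Cauchy--Schwarz.

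The main subtlety is interpretive rather than technical: making precise that $L^2_Y(\Omega)$ carries the product measure on $\Omega\times Y_a$. Under that reading, the decomposition $L^2(\Omega)\oplus\bigoplus_i L^2(\Omega)y_i$ is orthogonal by exactly this odd-symmetry computation.
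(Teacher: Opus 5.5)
Your proposal is correct and follows the paper's argument: both evaluate the inner product on $\Omega\times Y_a$ by Fubini and use that $\int_{Y_a} a_k(x)^\top y\,dy=0$ because $Y_a=[-a,a]^n$ is symmetric about the origin. Your reduction to spanning functions and the integrability check are extra care the paper omits; the boundedness assumption on $a_{kj}$ is not even needed, since $\phi,\psi_k\in L^2(\Omega\times Y_a)$ already gives $\phi\psi_k\in L^1$ by Cauchy--Schwarz.
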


\begin{proof}
For any $\phi(x)\in \mathbb{V}_x$ and $\psi(x,y)\in \mathbb{V}_y$, linearity of $\psi$ in $y$ and symmetry of $Y_a$ yield
$
\langle \phi,\psi\rangle_{L^2_Y(\Omega)}=\int_{\Omega}\phi(x)\left(\int_{Y_a}\psi(x,y)dy\right)dx=0,
$
since the integral of a linear function over a symmetric domain vanishes.
\end{proof}

\subsection{EDMD error bounds: invariant dictionary space}
\begin{lemma}\label{l:v-x-invariant}
Under Assumption 1, for the augmented dictionary space $\tilde{\mathbb{V}}$, the subspace $\mathbb{V}_x$ is invariant under the flow of system \eqref{e:augment-system-1}--\eqref{e:augment-system-2}.
\end{lemma}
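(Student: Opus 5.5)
Under Assumption 1, $\tilde{\mathbb V}$ is invariant under the Koopman semigroup of the augmented system. I will show that $\mathbb V_x$ is invariant by combining this with two structural facts. First, the $x$-component \eqref{e:augment-system-1} does not involve $y$. Second, the orthogonal splitting $\tilde{\mathbb V}=\mathbb V_x\oplus\mathbb V_y$ separates functions of $x$ alone from functions linear in $y$. I will work at the generator level, using the remark after Assumption 1: invariance of $\tilde{\mathbb V}$ is equivalent to $\mathcal L^U\tilde{\mathbb V}\subset\tilde{\mathbb V}$ for every constant $U=(u,v)$. The flow statement is then recovered by exponentiating the restricted generator.

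\textbf{Step 1: compute $\mathcal L^U$ on $\mathbb V_x$.} For $\phi\in\mathbb V_x$, $\phi$ depends only on $x$, so $\nabla_y\phi=0$. By \eqref{e:lie-op-comp} and \eqref{e:aug-field},
\[
\mathcal L^U\phi(x,y)=\nabla_x\phi(x)\cdot\bigl(f(x)+g(x)u\bigr),
\]
which is again a function of $x$ alone. The $y$-equation \eqref{e:augment-system-2}, and hence $v$, never enters.

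\textbf{Step 2: locate $\mathcal L^U\phi$ in the splitting.} By Assumption 1, $\mathcal L^U\phi\in\tilde{\mathbb V}$, so I can write $\mathcal L^U\phi=\phi_x+a(x)^\top y$ with $\phi_x\in\mathbb V_x$ and $a(x)^\top y\in\mathbb V_y$. The left-hand side does not depend on $y$. I will show that the $y$-linear part vanishes in either of two ways. Directly: fix $x$ and set $y=0$ and $y=E_j$, which forces every $a_j(x)=0$. Alternatively, use the preceding lemma: the orthogonal projection onto $\mathbb V_y$ of a function of $x$ alone is zero, since $\int_{Y_a}y\,dy=0$. Either route gives $\mathcal L^U\phi=\phi_x\in\mathbb V_x$, so $\mathcal L^U\mathbb V_x\subset\mathbb V_x$.

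\textbf{Step 3: pass from generator to flow.} Because $\mathbb V_x$ is finite dimensional and $\mathcal L^U$-invariant, $\Phi(t)=\mathcal K^t_U\phi$ solves $\dot\Phi=\mathcal L^U\Phi$ with $\Phi(0)=\phi$. The finite-dimensional ODE $\dot\Phi=\mathcal L^U|_{\mathbb V_x}\Phi$ has a solution $e^{t\mathcal L^U|_{\mathbb V_x}}\phi$ that stays in $\mathbb V_x$. Uniqueness of solutions in $\tilde{\mathbb V}$ then gives $\mathcal K^t_U\phi=e^{t\mathcal L^U|_{\mathbb V_x}}\phi\in\mathbb V_x$.

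\textbf{Expected obstacle.} The only delicate point is Step 2. The decomposition of $\mathcal L^U\phi$ must be unique, with the $y$-coefficients identified correctly. This requires $\mathbb V_x\cap\mathbb V_y=\{0\}$, which the orthogonality lemma guarantees. It also requires that the samples of $y$ range over a set spanning $\mathbb R^n$, which holds on the symmetric box $Y_a$. Once that is settled, the rest is a routine computation.
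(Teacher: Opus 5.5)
Your proposal is correct and follows the same route as the paper: compute $\mathcal L^{(u,v)}\phi=(f(x)+g(x)u)^\top\nabla_x\phi(x)$, observe that it does not depend on $y$, and then use the invariance of $\tilde{\mathbb V}$ together with the splitting into $x$-only and $y$-linear parts to place it in $\mathbb V_x$. Your Steps 2 and 3 only make explicit what the paper leaves implicit, namely why the $y$-linear component vanishes and how invariance under the generator gives invariance under the flow, and both steps are sound.
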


\begin{proof}
Take any constant control $U_0=(u_0,v_0)\in\mathbb{U}\times\tilde{\mathbb{U}}$. Since $\phi_k(x)$ depends only on $x$, its augmented gradient is $\nabla_\xi \phi_k = [\nabla_x \phi_k^\top, 0]^\top$. The Lie derivative along $F(\xi)+G(\xi)U_0$ satisfies
$
\mathcal{L}^{(u_0, v_0)} \phi_k
= \left(f(x)+g(x)u_0\right)^\top\nabla_x\phi_k(x),
$
which is independent of $y$. By the invariance of $\tilde{\mathbb{V}}$, this Lie derivative lies in $\tilde{\mathbb{V}}$; as it contains no $y$-dependent terms, it must belong to $\mathbb{V}_x$.
\end{proof}

\begin{lemma}\label{l:v-y-invariant}
Under Assumption 1, for the augmented dictionary space $\tilde{\mathbb{V}}$, the subspace $\mathbb{V}_y$ is invariant under the flow of system \eqref{e:augment-system-1}--\eqref{e:augment-system-2}.
\end{lemma}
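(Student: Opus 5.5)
Take a basis element $\psi_k(x,y)=a_k(x)^\top y$ of $\mathbb{V}_y$ and a constant control $U_0=(u_0,v_0)\in\mathbb{U}\times\tilde{\mathbb{U}}$. The plan is to compute its Lie derivative along $F(\xi)+G(\xi)U_0$ and show that the result lies again in $\mathbb{V}_y$, mirroring the proof of Lemma \ref{l:v-x-invariant}. We have $\nabla_\xi\psi_k=[(\nabla_x a_k(x)^\top y)^\top,\ a_k(x)^\top]^\top$, so
\begin{equation*}
\mathcal{L}^{(u_0,v_0)}\psi_k
= y^\top \nabla_x a_k(x)\,\bigl(f(x)+g(x)u_0\bigr)
+ a_k(x)^\top\Bigl(\nabla f(x)+\sum_{i=1}^m v_{0,i}\nabla g_i(x)\Bigr)y .
\end{equation*}
Both terms are linear in $y$: each has the form $b(x)^\top y$ for a smooth coefficient $b$. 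In particular this function is odd in $y$.

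By Assumption 1 applied to $\tilde{\mathbb{V}}=\mathbb{V}_x\oplus\mathbb{V}_y$, we can write $\mathcal{L}^{(u_0,v_0)}\psi_k=\phi+\psi$ with $\phi\in\mathbb{V}_x$ and $\psi\in\mathbb{V}_y$ (Assumption 1 is equivalent to $\mathcal L^{U}$-invariance, by the Remark above). Every element of $\mathbb{V}_x$ depends only on $x$, and every element of $\mathbb{V}_y$ is linear in $y$. To show $\phi=0$, I would evaluate the identity at $y=0$: the left-hand side and $\psi$ both vanish there, so $\phi(x)=0$ for all $x\in\Omega$. An alternative is to use the orthogonality lemma, since $\mathcal{L}^{(u_0,v_0)}\psi_k$ is orthogonal to $\mathbb{V}_x$ in $L^2_Y(\Omega)$, which forces $\phi=P_{\mathbb{V}_x}\mathcal{L}^{(u_0,v_0)}\psi_k=0$. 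Hence $\mathcal{L}^{(u_0,v_0)}\psi_k\in\mathbb{V}_y$.

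Linearity then gives $\mathcal{L}^{(u_0,v_0)}\mathbb{V}_y\subset\mathbb{V}_y$. Since $\mathbb{V}_y$ is finite dimensional, the generator restricted to it is a matrix, and the flow acts on $\mathbb{V}_y$ as its exponential. This yields $\mathcal{K}^t_{U_0}\mathbb{V}_y\subset\mathbb{V}_y$ for all $t\ge0$.

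The main obstacle is making the splitting step rigorous. Evaluating at $y=0$ requires that $y=0$, or at least a symmetric neighborhood of it, lies in the admissible region, which holds because $Y_a=[-a,a]^n$. It also requires that the decomposition $\tilde{\mathbb{V}}=\mathbb{V}_x\oplus\mathbb{V}_y$ be direct as functions; this follows from the orthogonality lemma. I would state both points explicitly, and I would prefer the orthogonal-projection argument because it is cleanest.
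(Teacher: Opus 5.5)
Your proposal is correct and follows the paper's proof: you compute $\mathcal{L}^{(u_0,v_0)}\psi_k$, observe that it has the form $\gamma(x)^\top y$, and use invariance of $\tilde{\mathbb{V}}$ to force the $\mathbb{V}_x$-component to vanish. Your evaluation at $y=0$ (or the orthogonal-projection variant) makes explicit the paper's ``matching the linear-in-$y$ structure'' step, and your matrix-exponential argument spells out the passage from $\mathcal{L}^{U_0}$-invariance to $\mathcal{K}^t_{U_0}$-invariance, which the paper leaves implicit.
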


\begin{proof}
For any constant control $\bar{U}=(\bar{u},\bar{v})\in\mathbb{U}\times\tilde{\mathbb{U}}$, the Lie derivative of $\psi_k(x,y)$ along $F(\xi)+G(\xi)\bar{U}$ reads
\begin{equation}\label{e:LU-lie}
\mathcal{L}^{\bar{U}} \psi_k= \left(f(x)+g(x)\bar{u}\right)^\top\nabla_x\psi_k 
+ \left[\nabla f(x)y+\sum_{i=1}^m\nabla g_i(x)y\,\bar{v}_i\right]^\top\nabla_y\psi_k.
\end{equation}
Since $\psi_k(x,y)=a_k(x)^\top y$ is linear in $y$, \eqref{e:LU-lie} yields
$
\mathcal{L}^{\bar{U}} \psi_k = \gamma_k^{\bar{U}}(x)^\top y
$
for some smooth vector-valued function $\gamma_k^{\bar{U}}(x)$, so the Lie derivative remains linear in $y$.

By the invariance of $\tilde{\mathbb{V}}$, $\mathcal{L}^{\bar{U}} \psi_k$ can be expressed as a linear combination of all basis functions in $\tilde{\mathbb{V}}$. Matching the linear-in-$y$ structure shows that coefficients of all $x$-only basis terms must be zero.
This establishes the invariance of $\mathbb{V}_y$.
\end{proof}

\begin{theorem}[EDMD error bounds for invariant dictionary space]\label{t:bounds-aug}
Let Assumption 1 hold for $\tilde{\mathbb{V}}$, and let the training data samples be i.i.d.. For any $\delta\in(0,1)$ and $\eta>0$, there exists $d_0\in\mathbb{N}$ such that for all $d\geq d_0$, the augmented system \eqref{e:augment-system} admits the data-driven representation
\begin{align}\label{e:equivalent-system}
\dot{\xi} = \left( F^\eta(\xi) + \alpha(\xi) \right) + \left( G^\eta(\xi) + \beta(\xi) \right) U,
\end{align}
with block-structured errors
\begin{align}\label{e:alpha-beta}
\alpha(\xi) = \begin{bmatrix} \alpha_1(x) \\ \alpha_2(x)^\top y \end{bmatrix}, \quad \beta(\xi) = \begin{bmatrix} \beta_1(x) & 0 \\ 0 & \beta_2(x)^\top y \end{bmatrix},
\end{align}
satisfying
\begin{align}\label{e:f-g-error-3}
\|\alpha_1(x)\| \leq \eta \|x\|, \quad \|\alpha_2(x)\| \leq \eta, \quad \|\beta_i(x)\| \leq \eta,
\end{align}
for all $x\in\Omega$ and $i=1,2$, with probability at least $1-\delta$.
\end{theorem}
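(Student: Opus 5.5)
The proof will reproduce the argument of Proposition~\ref{p:app} on the augmented dictionary $\tilde{\mathbb V}=\mathbb V_x\oplus\mathbb V_y$. The new ingredient is that Lemmas~\ref{l:v-x-invariant} and \ref{l:v-y-invariant} make the Lie operators, and the EDMD estimates, block diagonal with respect to this splitting. That structure is what yields the block form \eqref{e:alpha-beta}.

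\textbf{Step 1: structure of the exact operators.} Order the basis as $\Psi=[1,x^\top,\phi(x)^\top,y^\top,\psi(x,y)^\top]^\top$ and fix a constant input $\bar U\in\{0,E_1,\dots,E_{m+m}\}$. By Lemma~\ref{l:v-x-invariant}, $\mathcal L^{\bar U}$ maps $\mathbb V_x$ into $\mathbb V_x$. By Lemma~\ref{l:v-y-invariant}, it maps $\mathbb V_y$ into $\mathbb V_y$. So on $\tilde{\mathbb V}$,
\[
\mathcal L^{\bar U}=\mathrm{diag}\bigl(\mathcal L^{\bar U}_{x},\mathcal L^{\bar U}_{y}\bigr).
\]
The block $\mathcal L^{\bar U}_x$ has the form \eqref{e:L0}--\eqref{e:Lei}. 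Input directions $E_i$ belonging to $v$ act as $\mathcal L^0$ on $\mathbb V_x$, and directions belonging to $u$ act as $\mathcal L^0$ on $\mathbb V_y$ once the $x$-derivative terms are accounted for.

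\textbf{Step 2: structure of the EDMD estimates.} I will fit the EDMD least squares separately on the $\mathbb V_x$-rows and the $\mathbb V_y$-rows; equivalently, I regress in the orthogonal decomposition supplied by the orthogonality lemma. The estimators $\mathcal L^{\bar U}_d$ are then block diagonal with the same sparsity as in Step 1. Theorem~\ref{t:app-l-ld} applies to each of the finitely many constant inputs. Together with a union bound over these inputs, it gives $\|\mathcal L^{\bar U}|_{\tilde{\mathbb V}}-\mathcal L^{\bar U}_d\|\le c_r$ for every $\bar U$, with probability $1-\delta$, once $d\ge d_0=O(1/(c_r^2\delta))$.

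\textbf{Step 3: projecting to state space.} I define $F^\eta,G^\eta$ as in \eqref{e:f-g-app}, using $P_\xi$ to extract the coordinates $x$ and $y$ from $\Psi$. On the $x$-block, the argument of Proposition~\ref{p:app} carries over verbatim and gives $\alpha_1(x)$ with $\|\alpha_1(x)\|\le c_rL_\Phi\|x\|$ and $\beta_1(x)$ with $\|\beta_1(x)\|\le c_r(1+L_\Phi C_\Omega)$.

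On the $y$-block, every element of $\mathbb V_y$ has the form $a(x)^\top y$. Hence the $y$-rows of $(\mathcal L^{\bar U}-\mathcal L^{\bar U}_d)\Psi$ equal $E^{\bar U}\,A(x)^\top y$, where $A(x)=[a_1(x),\dots,a_K(x)]$ and $\|E^{\bar U}\|\le c_r$. Projecting onto the $y$-coordinates therefore gives $\alpha_2(x)^\top y$, with $\|\alpha_2(x)\|\le c_r\sup_\Omega\|A(x)\|$. The same reasoning gives $\beta_2(x)^\top y$ with the analogous bound. Here $\sup_\Omega\|A\|<\infty$ because $\bar\Omega$ is compact and the $a_k$ are continuous. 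Finally, I choose
\[
c_r=\eta/\max\{L_\Phi,\,1+L_\Phi C_\Omega,\,\sup_\Omega\|A\|\}.
\]

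\textbf{Main obstacle: Step 2.} Plain least squares on the full dictionary need not return an exactly block-diagonal matrix, because the empirical Gram matrix of finite samples is not exactly orthogonal between $\mathbb V_x$ and $\mathbb V_y$. I will try first to enforce the block structure by construction: separate regressions per block, which is natural given the $y$-linear form. If that is not allowed, I will instead bound the off-diagonal blocks by $c_r$ and absorb them into the errors. For the $\mathbb V_x$-rows, this requires showing that the estimated coefficients multiplying $y$-observables are negligible. I expect this to follow from the symmetry of the $y$-sampling on $Y_a$.
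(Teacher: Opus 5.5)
Your proposal follows the paper's own argument. Lemmas \ref{l:v-x-invariant} and \ref{l:v-y-invariant} supply the block structure, and each block of $\alpha$ and $\beta$ is bounded by the EDMD operator error $c_r$ from Theorem \ref{t:app-l-ld} times a dictionary constant: $L_\Phi\|x\|$ for $\alpha_1$, and a uniform bound on the coefficient vectors $a_k(x)$ for $\alpha_2$ and $\beta_2$. Then $d_0$ is chosen so that every product is at most $\eta$.

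The paper simply asserts that the EDMD errors and $G^\eta$ inherit the block form, so your attention to this point is added care rather than a different route. Settle it by construction. Either keep only the structurally nonzero blocks of $F^\eta$ and $G^\eta$, whose errors are controlled by submatrices of $\mathcal L^{\bar U}-\mathcal L^{\bar U}_d$. Or observe that, with the exact Lie-derivative data of \eqref{e:Y_0}, Assumption 1 gives $Y=LX$ for the matrix $L$ of $\mathcal L^{\bar U}|_{\tilde{\mathbb V}}$, hence $YX^\dagger=L$ once $X$ has full row rank. Do not use your fallback of absorbing off-diagonal leakage, since that would make $\alpha_1$ depend on $y$ and destroy $\alpha_1(0)=0$.
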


\begin{remark}
Compared with Proposition \ref{p:app}, here the residual terms explicitly preserve the linear-in-$y$ structure of the augmented dynamics.
\end{remark}

\begin{proof}
The proof builds on Proposition \ref{p:app} and the invariance properties of $\tilde{\mathbb{V}}$, proceeding in three steps.

1. By Lemma \ref{l:v-x-invariant}, the Lie derivative of $x$-coordinate observables $\phi_k(x)=x_k$ lies in $\mathbb{V}_x$. By the form of Lie operators of the augmented system, the EDMD approximation $f^\eta(x)$ of the drift yields the error
$
\alpha_1(x) = f(x)-f^\eta(x) = P_x\left[ \left(\mathcal{L}^{(0,0)}-\mathcal{L}_d^{(0,0)}\right)\Phi(\xi) \right],
$
which depends only on $x$. Using $\|P_x\|\leq1$ and $\|\Phi(\xi)\|\leq L_\Phi\|x\|$, we obtain
$
\|\alpha_1(x)\| \leq \left\|\mathcal{L}^{(0,0)}-\mathcal{L}_d^{(0,0)}\right\| L_\Phi \|x\| \leq c_r L_\Phi \|x\|.
$

For the $y$-component, Lemma \ref{l:v-y-invariant} ensures the Lie derivative of $y$-linear observables remains linear in $y$. The EDMD error takes the form $\alpha_2(x)^\top y$, where
$
\alpha_2(x)^\top y = P_y\left[ \left(\mathcal{L}^{(0,0)}-\mathcal{L}_d^{(0,0)}\right)\Upsilon(\xi) \right],
$
with $\Upsilon(\xi)=(\psi_1(\xi),\cdots,\psi_K(\xi))^\top$.
Taking the supremum over $\|y\|=1$ gives
$
\|\alpha_2(x)\| \leq L_{\Upsilon_y} \left\|\mathcal{L}^{(0,0)}-\mathcal{L}_d^{(0,0)}\right\| \leq L_{\Upsilon_y} c_r,
$
with $L_{\Upsilon_y}$ the maximum coefficient magnitude of the $y$-linear basis. Choosing $d_0$ large enough so that $c_r L_\Phi \leq \eta$ and $c_r L_{\Upsilon_y}\leq\eta$ yields the drift error bounds.

2.
Analogously, $G^\eta(\xi)$ inherits the block-diagonal structure of $G(\xi)$, with error
$
\beta(\xi) = \begin{bmatrix} \beta_1(x) & 0 \\ 0 & \beta_2(x)^\top y \end{bmatrix},
$
where $\beta_{1j}(x) = g_j(x) - G^\eta_{1j}(\xi)$ and $\beta_{2j}(x)^\top y = \nabla g_j(x)^\top y - G^\eta_{2j}(\xi)$ for $j\in [1:m]$.
For each $\beta_{1j}(x)$, using $\|P_x\| \leq 1$, the general EDMD error bound \eqref{e:error-data}, and Lemmas \ref{l:v-x-invariant} and \ref{l:v-y-invariant}, we obtain
\begin{align}\label{e:beta1j}
\|\beta_{1j}(x)\|
&= \left\| P_x \left[ \left( \mathcal{L}^{(E_j,0)} - \mathcal{L}_d^{(E_j,0)} \right) \Phi(\xi) \right] \right\| \notag \\
&\leq C_\Phi \left\| \mathcal{L}^{(E_j,0)} - \mathcal{L}_d^{(E_j,0)} \right\| \leq C_\Phi c_r,
\end{align}
where $C_\Phi = \sup_{\xi\in\Omega\times Y_a} \|\Phi(\xi)\|$.
For each $\beta_{2j}(x)$, using $\|P_y\| \leq 1$, we have for all $x\in\Omega$ and $y\in Y_a$
\begin{align}\label{e:beta2j}
\left\| \beta_{2j}(x)^\top y \right\|
&= \left\| P_y \left[ \left( \mathcal{L}^{(0,E_j)} - \mathcal{L}_d^{(0,E_j)} \right) \Psi(\xi) \right] \right\|\\
&\leq \left\| \mathcal{L}^{(0,E_j)} - \mathcal{L}_d^{(0,E_j)} \right\| \|\Upsilon(\xi)\| \leq c_r L_{\Upsilon} \|y\|.\notag
\end{align}
Sufficiently large $d_0$ ensures both terms are bounded by $\eta$.

3.
Combining all estimates, there exists $d_0\in\mathbb{N}$ such that for all $d\geq d_0$, bounds \eqref{e:f-g-error-3} hold with probability at least $1-\delta$.
\end{proof}

\subsection{EDMD error bounds: Non-Invariant Dictionary Spaces}\label{s:Error-bounds}

\begin{theorem}[EDMD error bounds for non-invariant dictionary space]\label{t:bounds-aug-general}
Let $\tilde{\mathbb{V}}$ be given by \eqref{e:observables} with subspaces $\mathbb{V}_x,\mathbb{V}_y$ from \eqref{e:vx-vy}, and let training samples be i.i.d.. For any $\delta\in(0,1)$ and $\eta>0$, there exists $d_0\in\mathbb{N}$ such that for all $d\geq d_0$, the augmented system \eqref{e:augment-system} admits the data-driven representation
\begin{align}\label{e:equivalent-system-general}
\dot{\xi} = \big( F^\eta(\xi) + \alpha(\xi) \big) + \big( G^\eta(\xi) + \beta(\xi) \big) U,
\end{align}
with block-structured errors as \eqref{e:alpha-beta}
satisfying
\begin{align}\label{e:f-g-error-general}
&\|\alpha_1(x)\| \leq \big( C_{\mathcal{L}} \mathcal{I}_{\max} L_\Phi + \eta \big) \|x\|, \notag\\
&\|\alpha_2(x)\| \leq C_{\mathcal{L}} \mathcal{I}_{\max} L_{\Psi_y} + \eta, \notag\\
&\|\beta_i(x)\| \leq C_{\mathcal{L}} \mathcal{I}_{\max} C_\Psi + \eta, \quad i=1,2,
\end{align}
for all $x\in\Omega$, with probability at least $1-\delta$. Here $C_{\mathcal{L}} = \sup_{U} \| \mathcal{L}^U |_{\tilde{\mathbb{V}}} \|$, $\mathcal{I}_{\max} = \sup_{U} \mathcal{I}_{\mathcal{L}^U}(\tilde{\mathbb{V}})$, $L_\Phi, L_{\Psi_y}$ are Lipschitz constants, and $C_\Psi=\sup_{x\in\bar\Omega} |\Psi(x)|$.
\end{theorem}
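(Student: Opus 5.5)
The plan is to repeat the three-step argument of Theorem \ref{t:bounds-aug}. The only change is that each EDMD operator error is split into a projection part and a statistical part, as in \eqref{e:non-inv}.

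\textbf{Step 1 (operator-level splitting).} For each of the finitely many constant inputs $\bar U\in\{(0,0),(E_j,0),(0,E_j)\}$ used in the construction of $\mathcal L^U_d$, write
\[
\mathcal L^{\bar U}|_{\tilde{\mathbb V}}-\mathcal L^{\bar U}_d=(\mathcal L^{\bar U}|_{\tilde{\mathbb V}}-P_{\tilde{\mathbb V}}\mathcal L^{\bar U}|_{\tilde{\mathbb V}})+(P_{\tilde{\mathbb V}}\mathcal L^{\bar U}|_{\tilde{\mathbb V}}-\mathcal L^{\bar U}_d).
\]
The first term is controlled by \eqref{e:proj-error}: its norm is at most $\|\mathcal L^{\bar U}|_{\tilde{\mathbb V}}\|\,\mathcal I_{\mathcal L^{\bar U}}(\tilde{\mathbb V})\le C_{\mathcal L}\mathcal I_{\max}$. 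For the second term, EDMD is a least-squares estimator of the projected operator $P_{\tilde{\mathbb V}}\mathcal L^{\bar U}|_{\tilde{\mathbb V}}$. The concentration argument behind Theorem \ref{t:app-l-ld} therefore applies with the true operator replaced by its compression, since that argument only uses i.i.d. sampling and the finite dimension of $\tilde{\mathbb V}$, not invariance. This gives $\|P_{\tilde{\mathbb V}}\mathcal L^{\bar U}|_{\tilde{\mathbb V}}-\mathcal L^{\bar U}_d\|\le c_r$ with probability $1-\delta/(2m+1)$ once $d\ge d_0=O(1/(c_r^2\delta))$. A union bound over the $2m+1$ inputs then yields the joint statement with probability $1-\delta$.

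\textbf{Step 2 (preserving the block structure).} This is the main obstacle. Lemmas \ref{l:v-x-invariant} and \ref{l:v-y-invariant} used invariance to guarantee that errors on $x$-observables depend only on $x$, and errors on $y$-observables are linear in $y$. I would replace them with the orthogonality lemma: $\mathbb V_x\perp\mathbb V_y$ in $L^2_Y(\Omega)$. For an $x$-only observable, $\mathcal L^{\bar U}\phi_k$ is $y$-independent, so its projection onto $\tilde{\mathbb V}=\mathbb V_x\oplus\mathbb V_y$ has zero $\mathbb V_y$ component and lies in $\mathbb V_x$. For $\psi_k=a_k^\top y$, formula \eqref{e:LU-lie} shows $\mathcal L^{\bar U}\psi_k$ is linear in $y$, hence orthogonal to every $x$-only function. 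Its projection therefore lies in $\mathbb V_y$. So $P_{\tilde{\mathbb V}}\mathcal L^{\bar U}$ is block diagonal with respect to $\mathbb V_x\oplus\mathbb V_y$, and the projection residual inherits the same structure. For the EDMD matrix $\mathcal L_d^{\bar U}$, I would enforce (or argue, by a symmetric sampling of $y\in Y_a$) that the regression decouples into $x$- and $y$-blocks. Then $\alpha,\beta$ take the form \eqref{e:alpha-beta}.

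\textbf{Step 3 (state-space bounds).} Apply $P_x$ and $P_y$ (both of norm at most $1$) to the decomposed errors, exactly as in the proof of Theorem \ref{t:bounds-aug}.
\begin{itemize}
\item For $\alpha_1$: use $\|\Phi(\xi)-\Phi(0)\|\le L_\Phi\|x\|$, together with the fact that both operators annihilate the constant and map to functions vanishing at $0$. This gives $\|\alpha_1(x)\|\le (C_{\mathcal L}\mathcal I_{\max}+c_r)L_\Phi\|x\|$.
\item For $\alpha_2$: take the supremum over $\|y\|=1$, which gives $\|\alpha_2(x)\|\le (C_{\mathcal L}\mathcal I_{\max}+c_r)L_{\Psi_y}$.
\item For $\beta_1,\beta_2$: use $C_\Psi$ in place of $L_\Phi\|x\|$, as in \eqref{e:beta1j}--\eqref{e:beta2j}.
\end{itemize}
Finally, choose $c_r\le \eta/\max\{L_\Phi,L_{\Psi_y},C_\Psi\}$, so that every statistical contribution is at most $\eta$. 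This determines $d_0$, and the projection contributions are exactly the $C_{\mathcal L}\mathcal I_{\max}(\cdot)$ terms in \eqref{e:f-g-error-general}.
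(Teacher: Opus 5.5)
Your proposal is essentially the paper's proof: the same split \eqref{e:non-inv} into a projection part bounded by $C_{\mathcal L}\mathcal I_{\max}$ and a statistical part bounded by $c_r$, the same use of the $0$-symmetry of $Y_a$ (orthogonality of $\mathbb V_x$ and $\mathbb V_y$) to show that $P_{\tilde{\mathbb V}}\mathcal L^{U}$ keeps the $x$-only/linear-in-$y$ block structure, and the same componentwise bounds through $P_x,P_y$ with $c_r$ chosen against $L_\Phi$, $L_{\Psi_y}$, $C_\Psi$. You are somewhat more careful than the paper in applying Theorem \ref{t:app-l-ld} to the compression $P_{\tilde{\mathbb V}}\mathcal L^{\bar U}|_{\tilde{\mathbb V}}$ with a union bound over the $2m+1$ inputs, and in noting that the empirical least-squares matrix must itself decouple into $\mathbb V_x$/$\mathbb V_y$ blocks (the paper only asserts this); your justification of the $\|x\|$ factor, however, does not cover the projection part of $\alpha_1$, because without invariance $P_{\tilde{\mathbb V}}\mathcal L^{\bar U}x_i$ need not vanish at the origin (for example, the $L^2(-1,1)$ projection of $x^2$ onto $\mathrm{span}\{1,x\}$ is the constant $1/3$), and the paper has the same weak point in its use of $\|\Phi(\xi)\|\le L_\Phi\|x\|$.
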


\begin{proof}
We extend the invariant-case analysis by incorporating projection errors from dictionary non-invariance.

1.
For any constant control $U$, the total EDMD error decomposes as \eqref{e:non-inv}:
\begin{align}\label{e:error-decomp-general}
\| \mathcal{L}^U |_{\tilde{\mathbb{V}}} - \mathcal{L}_d^U \| &\leq \| \mathcal{L}^U |_{\tilde{\mathbb{V}}} - P_{\tilde{\mathbb{V}}} \mathcal{L}^U |_{\tilde{\mathbb{V}}} \| + \| P_{\tilde{\mathbb{V}}} \mathcal{L}^U |_{\tilde{\mathbb{V}}} - \mathcal{L}_d^U \|.\notag\\
&\triangleq  E_{\text{proj}}(U)+E_{\text{stat}}(U)
\end{align}
where the statistical error satisfies $E_{\text{stat}}(U)\leq c_r$ with probability $1-\delta$ (Theorem \ref{t:app-l-ld}), and the projection error is bounded by
$E_{\text{proj}}(U) \leq \| \mathcal{L}^U |_{\tilde{\mathbb{V}}} \| \cdot \mathcal{I}_{\mathcal{L}^U}(\tilde{\mathbb{V}}) \leq C_{\mathcal{L}} \mathcal{I}_{\max}$.

2.
For any $\phi\in\mathbb{V}_x$, $\mathcal{L}^U\phi(x)$ depends only on $x$. By the $0$-symmetry of $Y_a$ and linearity of $\psi_k$ in $y$, the inner product $\langle\mathcal{L}^U\phi,\psi_k\rangle_{L^2(\Omega\times Y_a)}$ vanishes, so $P_{\tilde{\mathbb{V}}}\mathcal{L}^U\phi\in\mathbb{V}_x$. Hence the projection error $\mathcal{L}^U\phi - P_{\tilde{\mathbb{V}}}\mathcal{L}^U\phi$ is $x$-only.
For any $\psi_k\in\mathbb{V}_y$, $\mathcal{L}^U\psi_k = \gamma_k^U(x)^\top y$ remains linear in $y$. Similarly, $P_{\tilde{\mathbb{V}}}\mathcal{L}^U\psi_k\in\mathbb{V}_y$, so the projection error is also linear in $y$.
Thus the total error preserves the block structure as \eqref{e:alpha-beta}: errors on $\mathbb{V}_x$ are $x$-dependent, and errors on $\mathbb{V}_y$ are linear in $y$.

3. Following the same procedure as in Theorem \ref{t:bounds-aug},
for the $x$-component of drift error:
$
\alpha_1(x) = P_x \big[ \big( \mathcal{L}^{(0,0)} - \mathcal{L}_d^{(0,0)} \big) \Phi(\xi) \big].
$
Using \eqref{e:error-decomp-general}, $\|\Phi(\xi)\|\leq L_\Phi\|x\|$ and the total error bound:
$
\|\alpha_1(x)\| \leq \big( C_{\mathcal{L}} \mathcal{I}_{\max} + c_r \big) L_\Phi \|x\|.
$
Choosing $d_0$ such that $c_r L_\Phi\leq\eta$ yields the first bound.
For the $y$-component, $\alpha_2(x)^\top y = P_y \big[ \big( \mathcal{L}^{(0,0)} - \mathcal{L}_d^{(0,0)} \big) \Psi(\xi) \big]$. Taking $\sup_{\|y\|=1}$ gives
$
\|\alpha_2(x)\| \leq \big( C_{\mathcal{L}} \mathcal{I}_{\max} + c_r \big) L_{\Psi_y}.
$
Choosing $c_r L_{\Psi_y}\leq\eta$ for sufficiently large $d_0$ yields the second bounds in \eqref{e:f-g-error-general}.

4.  Similar to \eqref{e:beta1j} and \eqref{e:beta2j},
for the $x$-component $\beta_{1j}(x)$ corresponding to control $(E_j,0)$:
$
\|\beta_{1j}(x)\| \leq \| \mathcal{L}^{(E_j,0)} - \mathcal{L}_d^{(E_j,0)} \| C_\Psi \leq \big( C_{\mathcal{L}} \mathcal{I}_{\max} + c_r \big) C_\Psi,
$
and, for the $y$-component $\beta_{2j}(x)$ corresponding to $(0,E_j)$:
$
\|\beta_{2j}(x)\| \leq \big( C_{\mathcal{L}} \mathcal{I}_{\max} + c_r \big) L_{\Psi_y} \leq \big( C_{\mathcal{L}} \mathcal{I}_{\max} + c_r \big) C_\Psi.
$
Choosing $d_0$ such that $c_r C_\Psi\leq\eta$ completes the proof.
\end{proof}

\begin{remark}
When $\tilde{\mathbb{V}}$ is exactly invariant ($\mathcal{I}_{\max}=0$), Theorem \ref{t:bounds-aug-general} reduces to Theorem \ref{t:bounds-aug}. In practice, the Approximated-SSD algorithm (Section 8.1) selects a subspace $\tilde{\mathbb{V}}'\subset\tilde{\mathbb{V}}$ with sufficiently small $\mathcal{I}_{\max}$, rendering the projection error sufficiently small for applications.
\end{remark}

\textbf{Assumption 2.}
For dictionary $\tilde{\mathbb{V}}$, there exists a subspace $\tilde{\mathbb{V}}'\subset\tilde{\mathbb{V}}$ such that:
(1) $x^\top$ and $y^\top$ are contained in $\tilde{\mathbb{V}}'$;
(2) $C_{\mathcal{L}} \mathcal{I}_{\max} L_\Phi\le \eta$, $C_{\mathcal{L}} \mathcal{I}_{\max} L_{\Psi_y}\le \eta$, $C_{\mathcal{L}} \mathcal{I}_{\max} C_\Psi\le \eta$.

Let $\theta^\eta(x)$ and $\chi^\eta(x)$ denote the EDMD approximations of $\nabla f(x)$ and $\nabla g(x)$, respectively. Then Theorems \ref{t:bounds-aug} and \ref{t:bounds-aug-general} yield that
\begin{align}\label{e:theta-chi}
\nabla f(x) - \theta^\eta(x) = \alpha_2(x), \quad \nabla g(x) - \chi^\eta(x) = \beta_2(x).
\end{align}

For $(u,v)\in\mathbb{U}\times \tilde{\mathbb{U}}$, along any trajectory $(x(t), y(t))$,
$
\frac{d}{dt} \Psi(x(t), y(t)) = \mathcal{L}^{(u,v)} \Psi(x(t), y(t)).
$
By \eqref{e:lie-op-Lu},
\begin{equation}\label{e:lie-op-Lu-v}
\mathcal{L}^{(u,v)} = 
\mathcal{L}^{(0,0)} + \sum_{i=1}^{m} u_i \left[ \mathcal{L}^{(E_i,0)} - \mathcal{L}^{(0,0)} \right]
+ \sum_{i=1}^{m} v_i \left[ \mathcal{L}^{(0,E_i)} - \mathcal{L}^{(0,0)} \right].
\end{equation}

\section{Stable Manifolds of HJB Equations}\label{s:stable}

We briefly review the stable manifold approach for HJB equations in nonlinear optimal control.
Consider the control-affine system \eqref{e:control} with infinite-horizon cost
\begin{equation}\label{e:cost}
J(x, u) = \int_0^\infty \left(q(x) + \frac{1}{2} u^\top W u\right) dt, \quad W>0,
\end{equation}
where $q(x)\ge0$ is smooth. The associated HJB equation reads
\begin{equation}\label{e:HJB}
\nabla V(x)^\top f(x) - \frac{1}{2} \nabla V(x)^\top R(x) \nabla V(x) + q(x) = 0,
\end{equation}
with $R(x) = g(x) W^{-1} g(x)^\top$, and the Hamiltonian function
$H(x, p) = p^\top f(x) - \frac{1}{2} p^\top R(x) p + q(x)$.

The optimal feedback law is $u(x) = -W^{-1} g(x)^\top \nabla V(x)$, yielding the closed-loop system
\begin{align}\label{e:system-closed}
\dot{x} = f(x) - R(x) \nabla V(x).
\end{align}
A solution $V$ is \emph{stabilizing} if $\nabla V(0)=0$ and the origin is asymptotically stable for \eqref{e:system-closed}.
The characteristic Hamiltonian system of \eqref{e:HJB} is
\begin{align}\label{e:Hamiltonian-flow}
\begin{cases}
\dot{x} = f(x) - R(x) p, \\
\dot{p} = -\left(\frac{\partial f}{\partial x}\right)^\top p + \frac{1}{2} \frac{\partial}{\partial x}\left(p^\top R(x) p\right) - \left(\frac{\partial q}{\partial x}\right)^\top.
\end{cases}
\end{align}
The gradient graph $\Lambda_V = \{(x,p)\mid p=\nabla V(x)\}$ is an invariant manifold of \eqref{e:Hamiltonian-flow}. Linearizing at the origin gives the Hamiltonian matrix
\begin{align}\label{e:Ham-mat}
\operatorname{Ham} = \begin{bmatrix} A & -R(0) \\ -Q & -A^\top \end{bmatrix},
\end{align}
where $A = Df(0)$, $Q = D^2 q(0)$.

\textbf{Assumption 3.}
\begin{enumerate}
  \item[$(\mathbf{A_{31}})$] $f,g,q$ are $C^\infty$ on $\bar{\Omega}$, with $f(x)=Ax+O(\|x\|^2)$, $q(x)=\frac{1}{2}x^\top Qx+O(\|x\|^3)$.
  \item[$(\mathbf{A_{32}})$] ${\rm Ham}$ is hyperbolic, and its stable eigenspace $E_-$ satisfies $E_- \oplus {\rm Im}[0,\;I_n]^\top = \mathbb{R}^{2n}$.
\end{enumerate}

\begin{theorem}[Local stable manifold theorem]\label{t:stable-manifolds}
Under Assumption 3, the stable manifold of \eqref{e:Hamiltonian-flow} through the origin is an $n$-dimensional smooth submanifold, locally coinciding with $\Lambda_V$. For small $\|x\|$,
$
\|p(x) - Px\| \le k\|x\|^2,
$
where $P = D^2 V(0)$ solves the algebraic Riccati equation
\begin{equation}\label{e:riccati}
P A + A^\top P - P R(0) P + Q = 0.
\end{equation}
\end{theorem}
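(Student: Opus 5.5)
The plan is to follow the classical route of van der Schaft and Sakamoto--van der Schaft. Write the Hamiltonian system \eqref{e:Hamiltonian-flow} as $\dot z = \operatorname{Ham}\, z + N(z)$, where $z=(x,p)$ and $N(z)=O(\|z\|^2)$. This form uses $(\mathbf{A_{31}})$: $f(x)=Ax+O(\|x\|^2)$, $R(x)p=R(0)p+O(\|x\|\|p\|)$, and $\partial q/\partial x = Qx+O(\|x\|^2)$; the quadratic term $\tfrac12\partial_x(p^\top R p)$ is $O(\|p\|^2)$. By hyperbolicity in $(\mathbf{A_{32}})$, the invariant splitting $\mathbb R^{2n}=E_-\oplus E_+$ has $\dim E_-=n$. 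This is because the spectrum of a Hamiltonian matrix is symmetric under $\lambda\mapsto-\lambda$, and hyperbolicity excludes eigenvalues on the imaginary axis. The standard local stable manifold theorem, via a Lyapunov--Perron fixed point on the space of bounded exponentially decaying solutions, then gives a smooth $n$-dimensional local stable manifold $W^s_{\rm loc}$ tangent to $E_-$ at the origin, written as a graph over $E_-$.

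Next I use the transversality condition $E_-\oplus \operatorname{Im}[0,I_n]^\top=\mathbb R^{2n}$. It means that $E_-=\operatorname{Im}[I_n;P]$ for a unique $n\times n$ matrix $P$, since the projection of $E_-$ onto the $x$-coordinates is an isomorphism. Because $W^s_{\rm loc}$ is tangent to $E_-$, the implicit function theorem lets me reparametrize it near $0$ as a graph $\{(x,p(x))\}$ with $p$ smooth, $p(0)=0$, and $Dp(0)=P$. Taylor's theorem with $p\in C^2$ then gives $\|p(x)-Px\|\le k\|x\|^2$ for small $\|x\|$.

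It remains to show two things: $P$ solves \eqref{e:riccati}, and the graph is a gradient graph $\Lambda_V$. For the first, invariance of $E_-$ under $\operatorname{Ham}$ gives $\operatorname{Ham}[I;P]=[I;P]\Lambda$ with $\Lambda=A-R(0)P$. Reading off the second block row yields $-Q-A^\top P=P(A-R(0)P)$, which is exactly \eqref{e:riccati}. For the gradient structure, I show $W^s_{\rm loc}$ is Lagrangian for $\omega=\sum dx_i\wedge dp_i$. The Hamiltonian flow $\varphi_t$ preserves $\omega$, so for tangent vectors $v,w$ at a point of $W^s_{\rm loc}$ we have $\omega(v,w)=\omega(D\varphi_t v,D\varphi_t w)$. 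Both $D\varphi_t v$ and $D\varphi_t w$ decay exponentially as $t\to\infty$, so $\omega(v,w)=0$. A Lagrangian graph over a simply connected neighborhood satisfies $\partial p_i/\partial x_j=\partial p_j/\partial x_i$, so by the Poincaré lemma $p=\nabla V$ for a smooth $V$ with $V(0)=0$, $\nabla V(0)=0$, and $D^2V(0)=P$.

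Finally, I verify that $V$ solves \eqref{e:HJB}, so that the manifold coincides with $\Lambda_V$. The Hamiltonian $H$ is conserved along the flow, and every orbit on $W^s_{\rm loc}$ tends to $0$, where $H(0,0)=0$. Hence $H(x,\nabla V(x))\equiv 0$, which is the HJB equation. The restricted dynamics $\dot x=f(x)-R(x)\nabla V(x)$ has linearization $A-R(0)P$, whose spectrum is that of $\operatorname{Ham}|_{E_-}$ and so lies in the open left half-plane. The origin is therefore asymptotically stable, and $V$ is stabilizing. The main obstacle is the Lagrangian/exactness step, specifically justifying that the decay of tangent vectors forces $\omega$ to vanish; the rest is standard. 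I would handle it by using the exponential dichotomy bounds from the Lyapunov--Perron construction, namely $\|D\varphi_t|_{TW^s}\|\le Ce^{-\mu t}$.
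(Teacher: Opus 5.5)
The paper gives no proof of this theorem. It is quoted as the classical result of van der Schaft and Sakamoto--van der Schaft, and your argument is essentially that classical proof and is correct: the Lyapunov--Perron stable manifold, the graph over $x$ from the complementarity condition, the Riccati equation from invariance of $E_-$, the Lagrangian property from symplecticity plus exponential decay of $D\varphi_t$ on $TW^s$, the Poincar\'e lemma, and $H\equiv 0$ by conservation of $H$. If $\Lambda_V$ is meant for an a priori given stabilizing solution $V$ rather than the one you construct, add one step: $\Lambda_V$ is flow-invariant and, by asymptotic stability of the closed loop and hyperbolicity of the origin, its points near $0$ have forward orbits staying in a small neighborhood, so it lies in $W^s_{\rm loc}$ and coincides with your graph near the origin.
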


\begin{remark}\label{r:regularity}
We assume $V\in C^2(\Omega)$ and the graph representation $\Lambda_V$ holds semiglobally on $\Omega$.
\end{remark}

Following \cite{sakamoto2008analytical}, we introduce the coordinate transformation
\begin{equation}\label{e:t-trans}
T = \begin{bmatrix} I_n & S \\ P & P S + I_n \end{bmatrix}, \quad
\begin{bmatrix} \bar{x} \\ \bar{p} \end{bmatrix} = T^{-1} \begin{bmatrix} x \\ p \end{bmatrix},
\end{equation}
where $S$ solves the Lyapunov equation
$
(A - R(0) P) S + S (A - R(0) P)^\top = R(0).
$
Then system \eqref{e:Hamiltonian-flow} becomes
\begin{align}\label{e:Hamiltonian-flow-2}
\dot{\bar{x}} = \Theta \bar{x} + N_s(\bar{x},\bar{p}), \quad
\dot{\bar{p}} = -\Theta^\top \bar{p} + N_u(\bar{x},\bar{p}),
\end{align}
where $\Theta = A - R(0)P$, $N_s$ and $N_u$ denote the nonlinear remainder terms obtained after extracting the linear part of \eqref{e:Hamiltonian-flow}. We solve this transformed system under the boundary conditions
\begin{align}\label{e:bvp-n}
\bar{x}(0) = \bar{x}_0, \quad \bar{p}(+\infty) = 0.
\end{align}

\begin{prop}\label{p:bvp-existence}
\cite[Theorem 5]{sakamoto2008analytical}
For sufficiently small $\bar{x}_0$, the BVP \eqref{e:Hamiltonian-flow-2}--\eqref{e:bvp-n} has a unique solution, lying on the local stable manifold of the origin.
\end{prop}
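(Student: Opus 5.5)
The plan is to recast the boundary value problem \eqref{e:Hamiltonian-flow-2}--\eqref{e:bvp-n} as a fixed-point problem for the Lyapunov--Perron operator, and then solve that problem by the contraction mapping principle in a weighted space of functions on $[0,\infty)$.

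\textbf{Step 1: spectral separation and the integral equation.} By $(\mathbf{A_{32}})$ and Theorem \ref{t:stable-manifolds}, $P$ is the stabilizing solution of \eqref{e:riccati}, so $\Theta=A-R(0)P$ is Hurwitz. Fix $\sigma>0$ and $K\ge1$ such that
\[
\|e^{\Theta t}\|\le Ke^{-\sigma t}\quad\text{and}\quad \|e^{-\Theta^\top(-s)}\|=\|e^{\Theta^\top s}\|\le Ke^{-\sigma s}\quad\text{for } t,s\ge0 .
\]
The transformation $T$ in \eqref{e:t-trans} block-diagonalizes the linear part, so \eqref{e:Hamiltonian-flow-2} has linear part $\mathrm{diag}(\Theta,-\Theta^\top)$. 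For bounded (or decaying) solutions, the conditions $\bar x(0)=\bar x_0$ and $\bar p(+\infty)=0$ are equivalent to
\[
\bar x(t)=e^{\Theta t}\bar x_0+\int_0^t e^{\Theta(t-s)}N_s(\bar x,\bar p)(s)\,ds,\qquad
\bar p(t)=-\int_t^\infty e^{-\Theta^\top(t-s)}N_u(\bar x,\bar p)(s)\,ds .
\]
Both kernels decay exponentially on their integration ranges. Denote by $\mathcal T$ the operator defined by the right-hand side.

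\textbf{Step 2: contraction.} By $(\mathbf{A_{31}})$, $N_s$ and $N_u$ are smooth, vanish to second order at the origin, and satisfy
\[
\|N(z_1)-N(z_2)\|\le C\,r\,\|z_1-z_2\| \quad\text{for } \|z_i\|\le r .
\]
Work on the closed ball $\mathcal B_r$ of continuous functions $z=(\bar x,\bar p)$ with $\sup_t e^{\gamma t}\|z(t)\|\le r$, where $0<\gamma<\sigma$ is fixed. The standard estimates give
\[
\|\mathcal T z\|_\gamma\le K\|\bar x_0\|+\tfrac{2KCr^2}{\sigma-\gamma},\qquad
\|\mathcal T z_1-\mathcal T z_2\|_\gamma\le \tfrac{2KCr}{\sigma-\gamma}\|z_1-z_2\|_\gamma .
\]
Choose $r$ so small that $2KCr/(\sigma-\gamma)\le1/2$, and then require $\|\bar x_0\|\le r/(2K)$. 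With these choices $\mathcal T$ maps $\mathcal B_r$ into itself and is a contraction. Banach's fixed point theorem then yields a unique fixed point, which by Step 1 is the unique decaying solution of the BVP.

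\textbf{Step 3: the solution lies on the stable manifold.} The fixed point is a genuine solution of \eqref{e:Hamiltonian-flow-2}, because differentiating the integral equations recovers the ODE. It decays like $e^{-\gamma t}$, so its trajectory converges to the origin and therefore lies on the local stable manifold of Theorem \ref{t:stable-manifolds}. Conversely, any solution on the local stable manifold that starts in the small ball stays in $\mathcal B_r$ and solves the integral equation, so uniqueness within $\mathcal B_r$ gives uniqueness among small solutions.

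The main obstacle is justifying the equivalence in Step 1, namely that $\bar p(+\infty)=0$ forces the stated form of the $\bar p$ equation. Variation of constants gives
\[
\bar p(t)=e^{-\Theta^\top(t-\tau)}\bar p(\tau)-\int_t^\tau e^{-\Theta^\top(t-s)}N_u(s)\,ds .
\]
The boundary term decays only if the exponential growth of $e^{\Theta^\top(\tau-t)}$ is beaten. I would handle this by restricting from the outset to solutions in the weighted class, so that $\|\bar p(\tau)\|\le re^{-\gamma\tau}$, and letting $\tau\to\infty$ in the formula above. The estimates in Step 2 themselves are routine.
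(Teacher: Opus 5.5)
Your proof is correct and follows essentially the same route as the cited result. The paper itself only cites Sakamoto--van der Schaft, and their Theorem 5 is proved by successive approximation of these same Lyapunov--Perron integral equations (a Coddington--Levinson-type argument), which is your weighted-norm contraction written out as an iteration.

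One correction to your closing paragraph: for $\tau>t$ the kernel satisfies $\|e^{\Theta^\top(\tau-t)}\|\le Ke^{-\sigma(\tau-t)}$, because $\Theta^\top$ is Hurwitz (as you yourself used in Step 1), so the boundary term vanishes as $\tau\to\infty$ for any bounded $\bar p$ and there is no growth to beat. Had the kernel really grown at a rate near $\sigma$, decay of $\bar p$ at rate $\gamma<\sigma$ would not have beaten it anyway. What restricting to a small bounded class actually secures is that $N_u$ stays bounded along the trajectory, so the tail integral converges; this is also why the uniqueness is only local.
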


\section{Data-Driven Approximations of Stable Manifold and Optimal Control}\label{s:edmd-sm}

Based on the augmented-system error bounds in Theorem \ref{t:bounds-aug-general}, we construct a data-driven approximation of the characteristic Hamiltonian system, derive rigorous error estimates for the stable manifold, and obtain the corresponding nearly optimal control.

\subsection{Approximate Characteristic Hamiltonian System}
Define the data-driven approximation of \eqref{e:Hamiltonian-flow} as
\begin{align}\label{e:Ham-flow-app}
\begin{cases}
\dot{x} = f^\eta(x) - R^\eta(x) p, \\
\dot{p} = -\theta^\eta(x)^\top p + \frac{1}{2} \tilde{R}_d(x, p) - \nabla q(x)^\top,
\end{cases}
\end{align}
where $R^\eta(x) = g^\eta(x) W^{-1} g^\eta(x)^\top$, $\theta^\eta(x), \chi^\eta(x)$ are EDMD approximations of $\nabla f(x), \nabla g(x)$ (cf. \eqref{e:theta-chi}), and
\begin{align}\label{e:def-R-d}
\tilde{R}_d(x, p) = p^\top \left( \chi^\eta(x) W^{-1} g^\eta(x)^\top + g^\eta(x) W^{-1} \chi^\eta(x)^\top \right) p
\end{align}
approximates $\frac{\partial}{\partial x}\left(p^\top R(x) p\right)$.
For sufficiently small $\|x_0\|$, the two-point BVP for system \eqref{e:Ham-flow-app} with boundary conditions
\begin{align}\label{e:two-point-bvp}
x(0) = x_0, \quad p(+\infty) = 0
\end{align}
admits a unique local solution near the origin.

\begin{remark}
System \eqref{e:Ham-flow-app} is generally not an exact Hamiltonian system. It reduces to the exact Hamiltonian system for $f^\eta, g^\eta$ if $\theta^\eta(x) = \nabla f^\eta(x)$ and $\chi^\eta(x) = \nabla g^\eta(x)$. In practice, EDMD approximates Lie derivatives rather than direct gradients, so this consistency rarely holds. Our method bypasses this requirement by constructing the control directly from the stable manifold of \eqref{e:Ham-flow-app}.
\end{remark}

\begin{lemma}\label{l:app-condition}
Under Assumptions 2 and 3, there exists $\eta_0>0$ such that for all $\eta\in[0,\eta_0)$, the EDMD approximations $f^\eta,g^\eta$ satisfy Condition $(\mathbf{A_{31}})$, and system \eqref{e:Ham-flow-app} satisfies $(\mathbf{A_{32}})$.
\end{lemma}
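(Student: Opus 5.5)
The plan is to treat $(\mathbf{A_{31}})$ and $(\mathbf{A_{32}})$ separately, reducing both to smallness of the EDMD residuals from Theorem \ref{t:bounds-aug-general} under Assumption 2.

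\textbf{Condition $(\mathbf{A_{31}})$.} The approximations $f^\eta$, $g^\eta$, $\theta^\eta$, $\chi^\eta$ are finite linear combinations, with constant EDMD coefficient matrices, of dictionary functions composed with the projections $P_x,P_y$. I would simply assume the dictionary elements are $C^\infty$ on $\bar\Omega$; this is the only extra hypothesis needed, and it makes $f^\eta,g^\eta$ smooth. Since $\phi_k(0)=0$ for the nonconstant $x$-observables and the block structure \eqref{e:EDMD-Lu} removes the constant column from the drift estimator $\mathcal L^0_d$, we get $f^\eta(0)=0$. Writing $A^\eta:=Df^\eta(0)$ then gives $f^\eta(x)=A^\eta x+O(\|x\|^2)$. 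The cost $q$ is not approximated, so its part of $(\mathbf{A_{31}})$ is inherited directly.

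The quantitative link comes from \eqref{e:f-g-error-general} together with Assumption 2:
\[
\|f(x)-f^\eta(x)\|\le 2\eta\|x\| .
\]
Dividing by $\|x\|$ and letting $x\to0$ gives $\|A-A^\eta\|\le2\eta$. Similarly $\|g(x)-g^\eta(x)\|\le 2\eta$ on $\Omega$, so $\|g(0)-g^\eta(0)\|\le2\eta$ and
\[
\|R(0)-R^\eta(0)\|\le C\eta ,
\]
with $C$ depending on $\|g(0)\|$ and $\|W^{-1}\|$.

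\textbf{Condition $(\mathbf{A_{32}})$.} First I would linearize \eqref{e:Ham-flow-app} at the origin. The term $\tilde R_d$ is quadratic in $p$, so it contributes nothing. The $p$-equation linearizes to $-\theta^\eta(0)^\top p-Qx$. The linearization is therefore
\[
\operatorname{Ham}^\eta=\begin{bmatrix} A^\eta & -R^\eta(0)\\ -Q & -\theta^\eta(0)^\top\end{bmatrix}.
\]
This is not exactly Hamiltonian, because $\theta^\eta(0)$ need not equal $A^\eta$. That does not matter here: $(\mathbf{A_{32}})$ only requires hyperbolicity and a transversality property, not symplectic structure. By \eqref{e:theta-chi}, $\|\theta^\eta(0)-A\|=\|\alpha_2(0)\|\le2\eta$. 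Combined with the bounds above, this yields
\[
\|\operatorname{Ham}^\eta-\operatorname{Ham}\|\le C'\eta .
\]

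Next I would invoke spectral perturbation theory (Kato, as in Proposition \ref{p:linear-obs}). $\operatorname{Ham}$ is hyperbolic, so its spectrum keeps a distance $\gamma>0$ from the imaginary axis. For $C'\eta<\gamma'$ small, the eigenvalues of $\operatorname{Ham}^\eta$ stay off the imaginary axis and split into $n$ stable and $n$ unstable eigenvalues. Let $\Gamma$ be a contour enclosing the left-half-plane spectrum. The Riesz projection
\[
P_-^\eta=-\frac{1}{2\pi i}\oint_\Gamma(\operatorname{Ham}^\eta-\zeta I)^{-1}d\zeta
\]
depends continuously on $\eta$ and satisfies $\|P_-^\eta-P_-\|=O(\eta)$. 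Hence the stable eigenspace $E_-^\eta$ is an $O(\eta)$-perturbation of $E_-$ in the gap metric.

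\textbf{Main obstacle: transversality.} The hard step is showing that $E_-^\eta$ stays transversal to ${\rm Im}[0,\,I_n]^\top$. By $(\mathbf{A_{32}})$, $E_-$ is a graph $\{(x,Px)\}$, where $P$ solves \eqref{e:riccati}. Equivalently, if the columns of $\begin{bmatrix}X\\ Y\end{bmatrix}$ form a basis of $E_-$, then $X$ is invertible. Since the basis $P_-^\eta E_-$ of $E_-^\eta$ depends continuously on $\eta$, its top block $X^\eta$ converges to $X$. Invertibility is an open condition, so $X^\eta$ is invertible for $\eta$ small.

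I would make this quantitative with the bound $\|X^\eta-X\|\le C''\eta<\sigma_{\min}(X)$. Taking $\eta_0$ as the minimum of the hyperbolicity threshold and the invertibility threshold completes the argument.
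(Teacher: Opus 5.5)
Your proposal is correct and follows the paper's route: you bound $\|\operatorname{Ham}^\eta-\operatorname{Ham}\|=O(\eta)$ using the residual estimates of Theorem~\ref{t:bounds-aug-general} under Assumption~2, then use Kato-type continuity of the spectral projection to keep hyperbolicity and the complementarity condition. Beyond that, you spell out what the paper leaves to the citation (the Riesz projection and the openness of invertibility of the top block of a basis of $E_-^\eta$), and your choice $A^\eta=Df^\eta(0)$, kept distinct from $\theta^\eta(0)$, is the consistent one; the paper's identification $A^\eta=\theta^\eta(0)$ would make $\operatorname{Ham}^\eta$ exactly Hamiltonian, contrary to its later remarks.
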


\begin{proof}
By construction, $f^\eta,g^\eta$ are $C^\infty$ on $\bar{\Omega}$. By Theorem \ref{t:bounds-aug-general}, $f^\eta(x)=A^\eta x+O(\|x\|^2)$ with $A^\eta=\theta^\eta(0)$, verifying $(\mathbf{A_{31}})$.
The Jacobian of \eqref{e:Ham-flow-app} at the origin is
\begin{align}\label{e:Ham-mat-app}
\operatorname{Ham}^\eta = \begin{bmatrix}
A^\eta & -R^\eta(0) \\
-Q & -\theta^\eta(0)^\top
\end{bmatrix},
\end{align}
a perturbation of $\operatorname{Ham}$. By error bounds \eqref{e:theta-chi} and Theorem \ref{t:bounds-aug-general}, $\|A^\eta-A\|\le\eta$, $\|\theta^\eta(0)-A\|\le\eta$, and $\|R^\eta(0)-R(0)\|\le C_{W,g}\eta$ for some constant $C_{W,g}$ depending only on $W$ and $g$. By classical operator perturbation theory \cite[Chapter 2.1]{Kato1995Perturbation}, there exists small $\eta_0>0$ such that for all $\eta\in [0,\eta_0)$ hyperbolicity and the complementary condition are preserved.
\end{proof}

Let $\operatorname{Proj}_-^\eta$ (resp. $\operatorname{Proj}_-^0$) denote the spectral projection onto the stable generalized eigenspace of $\operatorname{Ham}^\eta$ (resp. $\operatorname{Ham}$). By the complementary condition, there exist matrices $P^\eta,P$ such that
\begin{align}\label{e:stable-eigen}
\operatorname{Proj}_-^0 \mathbb{R}^{2n} = \operatorname{span}\begin{bmatrix} I_n \\ P \end{bmatrix}, \quad
\operatorname{Proj}_-^\eta \mathbb{R}^{2n} = \operatorname{span}\begin{bmatrix} I_n \\ P^\eta \end{bmatrix},
\end{align}
where $P>0$ is the unique stabilizing solution to the algebraic Riccati equation \eqref{e:riccati}. Continuity of spectral projections (\cite[Chapter 2]{Kato1995Perturbation}) implies:

\begin{lemma}\label{l:p-app}
For any $\varepsilon>0$, there exists $\eta_0>0$ such that $\|P^\eta-P\|<\varepsilon$ for all $\eta\in[0,\eta_0)$.
\end{lemma}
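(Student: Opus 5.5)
The plan is to derive Lemma \ref{l:p-app} from the continuity of spectral projections, applied to the perturbation estimate for $\operatorname{Ham}^\eta$ already established in the proof of Lemma \ref{l:app-condition}.

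\emph{Step 1: bound the matrix perturbation.} From \eqref{e:Ham-mat} and \eqref{e:Ham-mat-app}, together with $\|A^\eta-A\|\le\eta$, $\|\theta^\eta(0)-A\|\le\eta$ and $\|R^\eta(0)-R(0)\|\le C_{W,g}\eta$, we obtain
\[
\|\operatorname{Ham}^\eta-\operatorname{Ham}\|\le (2+C_{W,g})\eta .
\]

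\emph{Step 2: continuity of the stable projection.} By $(\mathbf{A_{32}})$, $\operatorname{Ham}$ has no eigenvalues on the imaginary axis. Fix a contour $\Gamma$ in the open left half-plane that encloses all stable eigenvalues of $\operatorname{Ham}$ and stays a distance $r>0$ away from its spectrum. Then $\sup_{\zeta\in\Gamma}\|(\operatorname{Ham}-\zeta I)^{-1}\|=:M<\infty$. A Neumann series argument shows that if $(2+C_{W,g})\eta M<1/2$, then $\operatorname{Ham}^\eta-\zeta I$ is invertible on $\Gamma$, with $\|(\operatorname{Ham}^\eta-\zeta I)^{-1}-(\operatorname{Ham}-\zeta I)^{-1}\|\le 2M^2(2+C_{W,g})\eta$. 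Because the eigenvalues depend continuously on the matrix, for small $\eta$ the contour $\Gamma$ encloses exactly the stable eigenvalues of $\operatorname{Ham}^\eta$ and none of its unstable ones. The Riesz formula
\[
\operatorname{Proj}_-^\eta=-\frac{1}{2\pi i}\int_\Gamma(\operatorname{Ham}^\eta-\zeta I)^{-1}d\zeta
\]
then gives
\[
\|\operatorname{Proj}_-^\eta-\operatorname{Proj}_-^0\|\le \frac{|\Gamma|}{\pi}M^2(2+C_{W,g})\eta ,
\]
so the projection error is $O(\eta)$.

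\emph{Step 3: pass from projections to $P^\eta$.} This is the delicate step, since we need the graph matrix of the subspace and not just the projection. Let $X=\operatorname{Proj}_-^0\,[I_n;0]$, and write $X=[X_1;X_2]$. By the complementary condition, $\operatorname{Proj}_-^0$ restricted to $\operatorname{Im}[I_n;0]$ is injective onto $E_-$, so $X_1$ is invertible and $P=X_2X_1^{-1}$. This holds because the columns of $X$ span $E_-=\operatorname{span}[I_n;P]$, so each column of $X$ has the form $[c;Pc]$.

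Define $X^\eta=\operatorname{Proj}_-^\eta[I_n;0]=[X_1^\eta;X_2^\eta]$. Step 2 gives $\|X^\eta-X\|=O(\eta)$. Hence $X_1^\eta$ is invertible for small $\eta$, with $\|(X_1^\eta)^{-1}\|\le 2\|X_1^{-1}\|$. Moreover $\operatorname{rank}X^\eta=n=\dim E_-^\eta$, so $X^\eta$ spans $E_-^\eta$, and therefore $P^\eta=X_2^\eta(X_1^\eta)^{-1}$. The difference then splits as
\[
P^\eta-P=(X_2^\eta-X_2)(X_1^\eta)^{-1}+X_2\bigl((X_1^\eta)^{-1}-X_1^{-1}\bigr),
\]
and both terms are $O(\eta)$ by the resolvent identity for $X_1$.

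Choosing $\eta_0$ small enough that all the smallness requirements above hold and the final constant times $\eta_0$ is below $\varepsilon$ proves Lemma \ref{l:p-app}. The argument in fact gives the Lipschitz-type bound $\|P^\eta-P\|\le C\eta$.
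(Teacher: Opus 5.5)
Your Steps 1 and 2 are sound, given the bounds quoted from Lemma~\ref{l:app-condition}. They are exactly the continuity of spectral projections that the paper cites from Kato without further detail. The gap is in the justification of Step 3.

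The kernel of $\operatorname{Proj}_-^0$ is the unstable generalized eigenspace $E_+$. So $\operatorname{Proj}_-^0$ is injective on $\operatorname{Im}[I_n;0]$ exactly when $E_+\cap\operatorname{Im}[I_n;0]=\{0\}$. The complementary condition in $(\mathbf{A_{32}})$ is a different statement: it concerns $E_-$ and the $p$-axis $\operatorname{Im}[0;I_n]$, and it does not imply what you need.

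Take $n=1$, $A=a>0$, $R(0)=r>0$, $Q=0$. Then $\operatorname{Ham}=\begin{bmatrix} a & -r\\ 0 & -a\end{bmatrix}$ is hyperbolic. Its stable space $E_-=\operatorname{span}[1;2a/r]$ is complementary to the $p$-axis, and $P=2a/r>0$ is the stabilizing Riccati solution. Yet $E_+=\operatorname{span}[1;0]$ is the $x$-axis itself, so $X=\operatorname{Proj}_-^0[1;0]=0$ and $X_1$ is not invertible. This is an unstable mode of $A$ not penalized by $Q$, which nothing in the paper's assumptions excludes. So the identity $P=X_2X_1^{-1}$, and the perturbation argument built on $X_1$, is unsupported.

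The fix is to push forward a basis of $E_-$ rather than the $x$-axis. Put $X^\eta:=\operatorname{Proj}_-^\eta[I_n;P]$. Since $\operatorname{Proj}_-^0[I_n;P]=[I_n;P]$, Step 2 gives $\|X^\eta-[I_n;P]\|\le\|\operatorname{Proj}_-^\eta-\operatorname{Proj}_-^0\|\,\|[I_n;P]\|=O(\eta)$. Hence $X_1^\eta=I_n+O(\eta)$ is invertible.

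The columns of $X^\eta$ lie in $E_-^\eta$. That space has dimension $n$, because projections at distance less than $1$ have equal rank, so the columns span it and $P^\eta=X_2^\eta(X_1^\eta)^{-1}$. Your splitting, now with $X_1=I_n$ and $X_2=P$, gives $\|P^\eta-P\|\le C\eta$.

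This uses only the assumed complementarity of $E_-$, and it also shows that $E_-^\eta$ inherits that complementarity, as Lemma~\ref{l:app-condition} asserts. With this change your proof is a correct, quantitative version of the paper's one-line appeal. The linear rate is what the paper actually uses later, for example for $\|\Theta^\eta-\Theta\|\le C_\Theta\eta$ in Appendix~\ref{a:x-xeta}.
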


\begin{remark}\label{r:ham-app}
Equation \eqref{e:stable-eigen} implies that there exists a unique stable matrix $\Theta^\eta$ satisfying
$
\operatorname{Ham}^\eta \begin{bmatrix} I_n \\ P^\eta \end{bmatrix} = \begin{bmatrix} I_n \\ P^\eta \end{bmatrix} \Theta^\eta.
$
This yields that $\Theta^\eta=A^\eta-R^\eta(0)P^\eta$ and the generalized algebraic Riccati equation
\begin{equation}\label{e:general-Riccati}
P^\eta A^\eta + \theta^\eta(0)^\top P^\eta - P^\eta R^\eta(0) P^\eta + Q = 0,
\end{equation}
which reduces to the standard ARE \eqref{e:riccati} when $\eta=0$.
\end{remark}

For brevity, let $F_H(x,p)=[H_1(x,p)^\top,H_2(x,p)^\top]^\top$ and $F_H^\eta(x,p)=[H_1^\eta(x,p)^\top,H_2^\eta(x,p)^\top]^\top$ denote the exact and approximate Hamiltonian vector fields of \eqref{e:Hamiltonian-flow} and \eqref{e:Ham-flow-app}.

\begin{lemma}[Error bound for EDMD-approximated vector fields]\label{l:F-error}
Let $\Gamma\subset\mathbb{R}^n$ be a bounded domain. Under Assumptions 2 and 3, the exact and approximate Hamiltonian vector fields satisfy
$F_H(x,p) = F_H^\eta(x,p) + \Lambda(x,p)$ with
\begin{align}\label{e:lambda}
\|\Lambda(x,p)\| \leq C_{f,g,W} \eta \|(x,p)\|,\quad \forall (x,p)\in\Omega\times\Gamma,
\end{align}
where $C_{f,g,W}>0$ depends only on $f,g,W$.
\end{lemma}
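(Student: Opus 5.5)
The plan is to write $\Lambda = F_H - F_H^\eta$ componentwise and bound each difference using the error representation of Theorem~\ref{t:bounds-aug-general} together with Assumption~2. Writing $f = f^\eta + \alpha_1$, $g = g^\eta + \beta_1$, $\nabla f = \theta^\eta + \alpha_2$ and $\nabla g = \chi^\eta + \beta_2$ (cf.~\eqref{e:theta-chi}), the two components of $\Lambda$ are
\begin{align*}
\Lambda_1(x,p) &= \alpha_1(x) - \big(R(x) - R^\eta(x)\big)p, \\
\Lambda_2(x,p) &= -\alpha_2(x)^\top p + \tfrac12\Big(\tfrac{\partial}{\partial x}\big(p^\top R(x)p\big) - \tilde R_d(x,p)\Big),
\end{align*}
since the $\nabla q$ terms cancel exactly ($q$ is known). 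Under Assumption~2 all residuals are controlled by a multiple of $\eta$: $\|\alpha_1(x)\|\le 2\eta\|x\|$ and $\|\alpha_2\|,\|\beta_i\|\le 2\eta$.

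\textbf{Step 1 (drift and $\alpha_2$ terms).} These are immediate. We have $\|\alpha_1(x)\|\le 2\eta\|x\|$ and $\|\alpha_2(x)^\top p\|\le 2\eta\|p\|$.

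\textbf{Step 2 (control matrix difference).} Expand
\[
R - R^\eta = \beta_1 W^{-1} g^{\top} + g^\eta W^{-1}\beta_1^\top,
\]
writing $g^\eta = g - \beta_1$. Since $g$ is bounded on $\bar\Omega$ (Assumption~3) and $\|\beta_1\|\le 2\eta \le 2\eta_0$, the matrix $g^\eta$ is uniformly bounded, so $\|(R-R^\eta)p\| \le C\eta\|p\|$ with $C$ depending only on $\sup_{\bar\Omega}\|g\|$ and $\|W^{-1}\|$.

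\textbf{Step 3 (quadratic term, the main obstacle).} The term $\tfrac{\partial}{\partial x}(p^\top R p)$ equals $p^\top(\nabla g\, W^{-1} g^\top + g W^{-1}\nabla g^\top)p$, interpreted componentwise. Subtracting \eqref{e:def-R-d} and inserting $\nabla g = \chi^\eta + \beta_2$, $g = g^\eta + \beta_1$ gives a telescoped sum of terms, each containing at least one factor $\beta_1$ or $\beta_2$. Every such term is bounded by $C\eta\|p\|^2$. Because this is quadratic rather than linear in $p$, I will use the boundedness of $\Gamma$ to write $\|p\|^2 \le (\sup_\Gamma\|p\|)\,\|p\|$, which turns it into $C\eta\|p\|$. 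This introduces a dependence on the size of $\Gamma$. I would absorb that into $C_{f,g,W}$, or note that it is implicit in the statement's restriction to $\Omega\times\Gamma$. I also need to check that the componentwise (tensor) interpretation of $\chi^\eta$ versus $\nabla g$ matches index by index; this bookkeeping is where care is needed.

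\textbf{Step 4 (conclusion).} Summing the bounds gives $\|\Lambda\| \le C(\|x\|+\|p\|) \le \sqrt2\, C\eta\,\|(x,p)\|$, which proves \eqref{e:lambda}.
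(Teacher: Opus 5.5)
Your proposal is correct and takes essentially the same route as the paper: you split $\Lambda$ componentwise, expand $R-R^\eta$ so that each term carries one residual factor, telescope $\tfrac{\partial}{\partial x}(p^\top R p)-\tilde R_d$ into terms containing a $g$- or $\nabla g$-error, and use boundedness of $\Gamma$ to turn the resulting $\eta\|p\|^2$ bound into a linear one. Your explicit note that the constant then depends on the size of $\Gamma$, and your consistent use of $2\eta$ under Assumption~2, are more careful than the paper, which absorbs the $\Gamma$-dependence silently into $\hat C'_{g,W}$.
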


\begin{proof}
Let $\Lambda=[\Lambda_1^\top,\Lambda_2^\top]^\top$ and bound each component.

1.
Write $\Lambda_1 = [f(x)-f^\eta(x)] + [R^\eta(x)-R(x)]p \triangleq \alpha_1(x) + \Delta R(x)\,p$.
By Theorem \ref{t:bounds-aug-general}, $\|\alpha_1(x)\|\leq\eta\|x\|\leq\eta\|(x,p)\|$.
Expanding $\Delta R = g^\eta W^{-1}(g^\eta-g)^\top + (g^\eta-g)W^{-1}g^\top$ and using $\|g^\eta-g\|\leq\eta$, uniform boundedness of $g$ on compact $\bar\Omega$, and boundedness of $W^{-1}$, we obtain $\|\Delta R(x)\|\leq C_{g,W}\eta$. Hence
$
\|\Lambda_1\| \leq (1+C_{g,W})\eta\|(x,p)\|.
$

2. For $\Lambda_2$,
write
$\Lambda_2 = [\theta^\eta(x)-\nabla f(x)]^\top p + \frac{1}{2}\left[\frac{\partial}{\partial x}(p^\top R p)-\tilde{R}_d(x,p)\right]$.
By gradient error bounds and Assumption 2, $\|\theta^\eta-\nabla f\|\leq 2\eta$, so the first term is bounded by $2\eta\|p\|$.

For the second term, we have
$
\frac{\partial}{\partial x} \left( p^\top R(x) p \right) = p^\top \left( \nabla g(x) W^{-1} g(x)^\top + g(x) W^{-1} \nabla g(x)^\top \right) p.
$
From the definition of $\tilde{R}_d(x,p)$ in \eqref{e:def-R-d}, we decompose
\begin{align}
&\frac{\partial}{\partial x} \left[ p^\top R(x) p \right] - \tilde{R}_d(x,p)
= p^\top \left[ (\nabla g(x) - \chi^\eta(x)) W^{-1} g(x)^\top \right.\notag\\
&+\chi^\eta(x) W^{-1} (g(x) - g^\eta(x))^\top + (g(x) - g^\eta(x)) W^{-1} \nabla g(x)^\top\notag \\
&+\left. g^\eta(x) W^{-1} (\nabla g(x) - \chi^\eta(x))^\top \right] p. \notag
\end{align}

From Theorem \ref{t:bounds-aug-general} and Assumption 2, the gradient approximation error satisfies $\| \nabla g(x) - \chi^\eta(x) \| \leq 2\eta$. By Assumption 3, $\nabla g \in C^\infty(\bar{\Omega})$, and hence $\| \nabla g(x) \| \leq C_{\nabla g}$ for some constant $C_{\nabla g} > 0$ on the compact domain $\bar{\Omega}$. Hence
$
\left\| \frac{\partial}{\partial x} \left( p^\top R(x) p \right) - \tilde{R}_d(x,p) \right\| \leq  C_{g,W}' \eta \|(x,p) \|^2,
$
where $C_{g,W}'$ is constant depending only on $g,W$.  It follows that
$
\|\Lambda_2\| \leq \hat{C}_{g,W}'\,\eta\|(x,p)\|.
$
Combining both components gives \eqref{e:lambda} with $C_{f,g,W} = 1+C_{g,W}+\hat{C}_{g,W}'$.
\end{proof}

\subsection{Local Error Estimates for EDMD-Approximated Stable Manifolds}
Transformation \eqref{e:t-trans} decouples the linear part of the exact Hamiltonian system into stable and unstable blocks as in \eqref{e:Hamiltonian-flow-2}, with nonlinearities satisfying $N_s, N_u = O(\|(\bar{x},\bar{p})\|^2)$.

For the approximate system \eqref{e:Ham-flow-app}, the stable block is $\Theta^\eta = A^\eta - R^\eta(0)P^\eta$, which converges to $\Theta$ as $\eta\to0$. Since $\operatorname{Ham}^\eta$ is not Hamiltonian, the unstable block is not $-(\Theta^\eta)^\top$; instead, denote
$
\Xi^\eta = \left(\theta^\eta(0) - R^\eta(0)P^\eta\right)^\top.
$
Define $S^\eta$ as the unique solution to the asymmetric Lyapunov equation
\begin{align}\label{e:lypunov-app}
\Theta^\eta S^\eta + S^\eta \Xi^\eta = R^\eta(0).
\end{align}
As $\eta\to0$, $\Xi^\eta\to\Theta^\top$ and $S^\eta\to S$.

The generalized unstable eigenspace of $\operatorname{Ham}^\eta$ has basis $\begin{bmatrix} S^\eta \\ P^\eta S^\eta + I_n \end{bmatrix}$. Define the invertible transformation matrix
$
T^\eta = \begin{bmatrix} I_n & S^\eta \\ P^\eta & P^\eta S^\eta + I_n \end{bmatrix},
$
which converges to $T$ as $\eta\to0$, with $(T^\eta)^{-1}$ continuous in $\eta$.
Applying $(\bar{x}^\eta, \bar{p}^\eta) = (T^\eta)^{-1}(x,p)$ to \eqref{e:Ham-flow-app} yields
\begin{align}\label{e:hs-transform-app}
\begin{cases}
\dot{\bar{x}}^\eta = \Theta^\eta \bar{x}^\eta + N_s^\eta(\bar{x}^\eta, \bar{p}^\eta), \\
\dot{\bar{p}}^\eta = -\Xi^\eta \bar{p}^\eta + N_u^\eta(\bar{x}^\eta, \bar{p}^\eta),
\end{cases}
\quad \bar{x}^\eta(0,\xi) = \xi,
\end{align}
where $N_s^\eta, N_u^\eta = O(\|(\bar{x}^\eta, \bar{p}^\eta)\|^2)$ near the origin.

\subsubsection{Estimate for near the equilibrium}

\begin{lemma}\label{l:p-error-local}
For any $\varepsilon>0$, there exist $\sigma_0>0$ and $\eta_0>0$ such that for all $\eta\in[0,\eta_0)$,
\begin{equation}\label{e:p-eta-p-local}
\|p^\eta(x) - p(x)\| \le \varepsilon \|x\|,\quad \forall x\in B_{\sigma_0}(0).
\end{equation}
\end{lemma}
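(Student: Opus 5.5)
We prove the lemma with the Lyapunov--Perron characterization of the local stable manifolds of \eqref{e:Hamiltonian-flow-2} and \eqref{e:hs-transform-app}, followed by a perturbation estimate between the two fixed points. In the transformed coordinates, the stable manifold of the exact system is the graph $\bar p=h(\bar x)$. Here $h(\xi)=\bar p(0;\xi)$, where $(\bar x(\cdot;\xi),\bar p(\cdot;\xi))$ is the unique fixed point of
\begin{align*}
\mathcal T(\bar x,\bar p)(t)=\Big(e^{\Theta t}\xi+\int_0^t e^{\Theta(t-s)}N_s\,ds,\; -\int_t^\infty e^{-\Theta^\top(t-s)}N_u\,ds\Big)
\end{align*}
in a weighted space $C_\mu=\{z:\sup_t e^{\mu t}\|z(t)\|<\infty\}$, with $0<\mu<\min\{-\mathrm{Re}\,\lambda(\Theta)\}$. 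For the approximate system, $\mathcal T^\eta$ is built the same way from $\Theta^\eta,\Xi^\eta,N_s^\eta,N_u^\eta$, and its fixed point gives $h^\eta$. Lemma~\ref{l:p-app} and the convergence $S^\eta\to S$, $\Xi^\eta\to\Theta^\top$, $T^\eta\to T$ give dichotomy constants $\|e^{\Theta^\eta t}\|\le Ke^{-\mu' t}$ and $\|e^{-\Xi^\eta t}\|\le Ke^{\mu' t}$ for $t\ge0$, holding uniformly in $\eta\in[0,\eta_0)$. The $\eta$-dependent nonlinear terms are $C^1$ with $N^\eta(0)=0$ and $DN^\eta(0)=0$. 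So for $\sigma_0$ small, the Lipschitz constant of $N^\eta$ on $B_{2\sigma_0}$ is small uniformly in $\eta$. This makes $\mathcal T$ and $\mathcal T^\eta$ contractions with a common constant $\kappa<1$ on a ball of radius proportional to $\|\xi\|$, and the fixed point satisfies $\|z\|_{C_\mu}\le C\|\xi\|$.

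The key estimate is
\begin{align*}
\|z^\eta-z\|_{C_\mu}\le \frac{1}{1-\kappa}\|\mathcal T^\eta z-\mathcal T z\|_{C_\mu}.
\end{align*}
We bound the right-hand side in three parts.
\begin{itemize}
\item The linear part, $(e^{\Theta^\eta t}-e^{\Theta t})\xi$ together with the corresponding integral kernels, contributes $o(1)\|\xi\|$ as $\eta\to0$. We get this from the variation-of-constants bound $\|e^{\Theta^\eta t}-e^{\Theta t}\|\le Ct e^{-\mu' t}\|\Theta^\eta-\Theta\|$ and by absorbing $t$ into the gap between $\mu$ and $\mu'$.
\item The nonlinear difference $N^\eta-N$ along $z$ must be shown to be $o(1)\|z\|$. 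The transformed nonlinearities are $N^\eta=(T^\eta)^{-1}F_H^\eta(T^\eta\cdot)-\mathrm{lin}$. By Lemma~\ref{l:F-error}, $\|F_H-F_H^\eta\|\le C\eta\|(x,p)\|$, while the change of linear parts and of $T^\eta$ is also $O(\eta)$ or $o(1)$ by Lemma~\ref{l:p-app}. Together these give $\|N^\eta(w)-N(w)\|\le (C\eta+o(1))\|w\|$ on $B_{2\sigma_0}$.
\item Each of these terms is integrated against the exponential weights, which yields $\|\mathcal T^\eta z-\mathcal T z\|_{C_\mu}\le \epsilon(\eta)\|\xi\|$ with $\epsilon(\eta)\to0$.
\end{itemize}

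The main obstacle is that Lemma~\ref{l:F-error} controls only the size of $\Lambda$, not its Lipschitz constant. Contraction of $\mathcal T^\eta$ therefore needs the Lipschitz smallness of $N^\eta$ near $0$ to hold uniformly in $\eta$. I would try to obtain this from the smoothness of the EDMD fields $f^\eta,g^\eta,\theta^\eta,\chi^\eta$: they are finite combinations of fixed dictionary functions whose coefficient matrices converge, so their $C^2$ norms on $\bar\Omega$ are uniformly bounded. This gives a uniform quadratic bound $\|N^\eta(w)\|\le C\|w\|^2$ and uniform derivative bounds. If that fails, I would fall back on a Lipschitz version of \eqref{e:lambda}.

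Finally, I return to the original coordinates. The point $(x,p(x))$ equals $T(\xi,h(\xi))$ with $\xi$ determined implicitly by $x=\xi+Sh(\xi)$, and similarly for $\eta$. Since $h(\xi)=O(\|\xi\|^2)$ and $h^\eta(\xi)=O(\|\xi\|^2)$ uniformly, these maps $x\mapsto\xi$ are near-identity, bi-Lipschitz uniformly in $\eta$ with inverses $\xi(x),\xi^\eta(x)$ satisfying $\|\xi^\eta(x)-\xi(x)\|\le(\epsilon(\eta)+C\|S^\eta-S\|)\|x\|$. Combining this with $\|h^\eta-h\|\le\epsilon(\eta)\|\xi\|$ and $\|T^\eta-T\|\to0$ gives $\|p^\eta(x)-p(x)\|\le\varepsilon\|x\|$ on $B_{\sigma_0}(0)$ once $\eta_0$ is chosen so that the accumulated $o(1)$ factors are below $\varepsilon$.
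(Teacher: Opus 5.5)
Your proposal is correct. It follows the paper's overall strategy, a Lyapunov--Perron characterization plus perturbation of a contraction's fixed point, but the formulation differs and changes which estimates are needed.

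\textbf{Graph space versus trajectory space.} The paper's operator acts on graphs: $\mathcal{LP}(\bar p)(\bar\xi)=-\int_0^\infty e^{t\Theta^\top}N_u(\bar x(t),\bar p(\bar x(t)))\,dt$ on $\mathcal{B}_\rho^0$, with norm $\sup_{\xi\neq 0}\|\xi\|^{-1}\|\bar p(\xi)\|$. It uses contraction of the \emph{exact} operator and bounds $\mathcal{LP}^\eta(\bar p)-\mathcal{LP}(\bar p)$ uniformly over all $\bar p\in\mathcal{B}_\rho^0$. Since one graph generates different base trajectories $\bar x(t)$ and $\bar x^\eta(t)$ under the two flows, this produces the extra term $E_3$. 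That term is handled by the separate appendix Gronwall estimate for $\bar x^\eta-\bar x$.

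Your trajectory-space operator evaluates $\mathcal T^\eta z-\mathcal T z$ along the single exact fixed point $z$, so no trajectory comparison arises. The price is that $\mathcal T^\eta$ must contract uniformly in $\eta$. Hence you need uniform-in-$\eta$ Lipschitz smallness of $N^\eta$ near $0$.

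\textbf{Uniform Lipschitz control.} The paper needs the same uniform control, since it uses $L_{N^\eta}$ in $E_3$ and $\kappa^\eta<1$. It simply asserts $L_{N^\eta}=L_N+O(\eta)$. You are right that Lemma~\ref{l:F-error} does not give this. Your remedy is the appropriate justification: the EDMD fields are fixed smooth dictionary functions with uniformly bounded coefficient matrices, hence uniformly $C^2$-bounded.

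\textbf{Two places where your version is more careful.} First, you claim only $o(1)$ for $\|\Theta^\eta-\Theta\|$, $\|\Xi^\eta-\Theta^\top\|$ and $\|T^\eta-T\|$. That is all Lemma~\ref{l:p-app} provides, and all the statement needs. The paper's $O(\eta)$ bound for $E_1$ would require $\|P^\eta-P\|=O(\eta)$, which is true by analytic perturbation theory but is not what that lemma states. Second, you carry out the reparametrization $x=\xi+Sh(\xi)$ versus $x=\xi+S^\eta h^\eta(\xi)$ explicitly. The paper's ``transforming back via $T,T^\eta$'' passes over the fact that the two graphs are written over different coordinate systems.

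\textbf{One small slip.} The bound your unstable integral actually uses is the backward decay $\|e^{\Xi^\eta t}\|\le Ke^{-\mu' t}$ for $t\ge 0$, i.e.\ $\|e^{-\Xi^\eta(t-s)}\|\le Ke^{-\mu'(s-t)}$ for $s\ge t$. It is not the growth bound $\|e^{-\Xi^\eta t}\|\le Ke^{\mu' t}$ that you wrote.
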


\begin{proof}
We use a perturbation argument based on Lyapunov--Perron operators.

1.
Let $\mathcal{E}^0$ be the Banach space of continuous functions $p:\mathbb{R}^n\to\mathbb{R}^n$ with $p(0)=0$ and finite norm $\|p\|_\mathcal{E}=\sup_{\xi\neq0}\|\xi\|^{-1}\|p(\xi)\|$.
Let $\mathcal{B}_\rho^0\subset\mathcal{E}^0$ be the complete subset of functions with Lipschitz constant at most $\rho$.
By the stable manifold theorem (\cite[Theorem 4.1]{chicone2006ordinary}), both the exact and approximate stable manifolds admit graph representations $p\in\mathcal{B}_\rho^0$ and $p^\eta\in\mathcal{B}_\rho^0$, with $Dp(0)=P$ and $Dp^\eta(0)=P^\eta$.

2. The Lyapunov--Perron method reduces stable manifold existence to fixed-point problems of integral operators:
$\bar p = \mathcal{LP}(\bar p)$ and $\bar p^\eta = \mathcal{LP}^\eta(\bar p^\eta)$,
where $\mathcal{LP},\mathcal{LP}^\eta: \mathcal{B}_\rho^0 \to \mathcal{B}_\rho^0$ are contraction mappings.
Specifically,
for the exact system \eqref{e:Hamiltonian-flow-2}, define
\begin{equation*}
\mathcal{LP}(\bar p)(\bar\xi) = -\int_0^\infty e^{t \Theta^\top} N_u\big(\bar x(t,\bar\xi,\bar p),\bar p(\bar x(t,\bar\xi,\bar p))\big) dt,
\end{equation*}
where $\bar x(t)$ solves
\begin{eqnarray}\label{e:bar-x-exact}
\dot{\bar x}(t) = \Theta \bar x(t) + N_s(\bar x(t), \bar p(\bar x(t))), \bar x(0) = \bar\xi,
\end{eqnarray}
and $\|e^{t\Theta^\top}\nu\|\le K e^{-bt}\|\nu\|$ for some $K,b>0$.
The nonlinear terms satisfy quadratic growth
\begin{eqnarray}\label{e:N_s-N_u-order}
\|N_s(\bar x,\bar p)\|, \|N_u(\bar x,\bar p)\| \le C_N\|(\bar x,\bar p)\|^2
\end{eqnarray}
and the local Lipschitz bound
\begin{align}\label{e:lip-con}
&\|N_u(\bar x_1,\bar p_1)-N_u(\bar x_2,\bar p_2)\|\\
&\le L_N\big(\|(\bar x_1,\bar p_1)\|+\|(\bar x_2,\bar p_2)\|\big)\|(\bar x_1-\bar x_2,\bar p_1-\bar p_2)\|,\notag
\end{align}
and likewise for $N_s$. Following the Lyapunov-Perron method (\cite[Chapter 4.1]{chicone2006ordinary}),
by Gronwall's inequality, there exists $\sigma_0>0$ (independent of $\varepsilon,\eta$) such that
$\mathcal{LP}$ is a contraction on $\mathcal{B}_\rho^0$ with factor $\kappa\in(0,1)$,
and trajectories satisfy $\|\bar x(t)\|,\|\bar p(t)\|\le K e^{-bt}\|\bar\xi\|$.

For the approximate system \eqref{e:hs-transform-app}, define
\begin{equation*}
\mathcal{LP}^\eta(\bar p)(\bar\xi) = -\int_0^\infty e^{t \Xi^\eta} N_u^\eta\big(\bar x^\eta(t,\bar\xi,\bar p),\bar p(\bar x^\eta(t,\bar\xi,\bar p))\big) dt,
\end{equation*}
with $\bar x^\eta$ solving
\begin{eqnarray}\label{e:bar-x-eta}
\dot{\bar x}^\eta(t) = \Theta^\eta \bar x^\eta(t) + N_s^\eta(\bar x^\eta(t), \bar p(\bar x^\eta(t))), \quad\bar x^\eta(0) = \bar\xi,
\end{eqnarray}
and $\|e^{t\Xi^\eta}\nu\|\le K e^{-b^\eta t}\|\nu\|$, $b^\eta\to b$ as $\eta\to0$.
Analogously, $\mathcal{LP}^\eta$ is a contraction with factor $\kappa^\eta\in(0,1)$. Moreover, the local Lipschitz bound
\begin{align}\label{e:lip-con}
&\|N_u^{\eta}(\bar x_1,\bar p_1)-N_u^{\eta}(\bar x_2,\bar p_2)\|\\
&\le L_{N^{\eta}}\big(\|(\bar x_1,\bar p_1)\|+\|(\bar x_2,\bar p_2)\|\big)\|(\bar x_1-\bar x_2,\bar p_1-\bar p_2)\|,\notag
\end{align}
and likewise for $N_s^{\eta}$.
By Lemmas \ref{l:app-condition}--\ref{l:F-error},
\begin{eqnarray}\label{e:Nu-eta-Nu}
\|N_u^\eta(\bar x,\bar p) - N_u(\bar x,\bar p)\| \leq C_N\eta \|(\bar x,\bar p)\|.
\end{eqnarray}
For small $\eta$, $L_{N^\eta}=L_N+O(\eta)$ and $\kappa^\eta=\kappa+O(\eta)<1$.

3.
To bound the fixed-point error $\|p^\eta - p\|_\mathcal{E}$, we first estimate the operator difference $\mathcal{LP}^\eta - \mathcal{LP}$. For any $\bar p \in \mathcal{B}_\rho^0$, decompose
\begin{align*}
&\mathcal{LP}^\eta(\bar p)-\mathcal{LP}(\bar p)
= \int_0^\infty \big(e^{t\Theta^\top}-e^{t\Xi^\eta}\big)N_u\,dt\\
&+ \int_0^\infty e^{t\Xi^\eta}\big(N_u-N_u^\eta\big)dt
+ \int_0^\infty e^{t\Xi^\eta}\big(N_u^\eta(\bar x)-N_u^\eta(\bar x^\eta)\big)dt\\
&\triangleq E_1+E_2+E_3.
\end{align*}
where all integrands are evaluated along the corresponding trajectories $(\bar x(t),\bar p(\bar x(t)))$ or $(\bar x^\eta(t),\bar p(\bar x^\eta(t)))$.

For $E_1$, Lemma \ref{l:F-error} yields that there exists $C_{E_1} > 0$ such that
$
\|e^{t \Theta^\top} - e^{t \Xi^{\eta}}\| \leq C_{E_1} \eta e^{-b t}
$
for $t \geq 0$. Using \eqref{e:N_s-N_u-order}, we have
$\|N_u(\bar x(t), \bar p(\bar x(t)))\| \leq C_N K^2 e^{-2b t} \|\bar\xi\|^2.$
 Therefore,
$
\|E_1\| \leq C_{E_1}C_N K^2 \eta \|\bar\xi\|^2 \int_0^\infty e^{-3bt} dt = \frac{C_{E_1}C_N K^2 \eta }{3b} \|\bar\xi\|^2.
$
Since $p \in \mathcal{B}_\rho^0$ and $\|\bar\xi\| \leq \sigma_0$ (within the local ball), we have
\begin{eqnarray}\label{e:E1}
\|E_1\|_\mathcal{E} \leq C_{E1}' \eta,
\end{eqnarray}
where $C_{E_1}' = \frac{C_{E_1}C_N K^2 }{3b}$.

For $E_2$, by the error bound \eqref{e:Nu-eta-Nu}, we have that
$
\|E_2\| \leq C_N K^2\eta \int_0^\infty e^{-2bt} \|\bar\xi\| dt = \frac{C_N K^2 \eta}{2b} \|\bar\xi\| \leq C_{E_2}' \eta \|\bar\xi\|,
$
where $C_{E_2}' = \frac{K^3 C_N}{2b}$. That is,
\begin{eqnarray}\label{e:E2}
\|E_2\|_\mathcal{E} \leq C_{E_2}' \eta.
\end{eqnarray}

For $E_3$, by Lemma \ref{l:F-error} and Appendix \ref{a:x-xeta}, we have that
\begin{eqnarray}\label{e:bar-x-error}
\|\bar x^\eta(t) - \bar x(t)\| \leq C_{E_3} \eta e^{-\frac{b}{2}t } \|\bar \xi\|
\end{eqnarray}
and the Lipschitz continuity of $N_u^\eta$:
\begin{eqnarray}
&&\|N_u^\eta(\bar x^\eta, \bar p(\bar x^\eta)) - N_u^\eta(\bar x, \bar p(\bar x))\| \notag\\
&\leq& L_{N^\eta} \left( \|\bar x^\eta -\bar x\| + \|\bar p(\bar x^\eta) - \bar p(\bar x)\| \right) \left( \|\bar x^\eta\| + \|\bar x\| \right).\notag
\end{eqnarray}
Substituting $\|\bar p(\bar x^\eta) - \bar p(\bar x)\| \leq \rho \|\bar x^\eta - \bar x\|$ and estimate \eqref{e:bar-x-error}, we obtain that
$
\|E_3\| \leq C_{E_3}' \eta \|\bar\xi\|.
$
That is,
\begin{eqnarray}\label{e:E3}
\|E_3\|_\mathcal{E} \leq C_{E_3}' \eta.
\end{eqnarray}

Combining the three error terms \eqref{e:E1}, \eqref{e:E2}, \eqref{e:E3}, there exists a constant $C_\Lambda = C_{E1}' + C_{E2}' + C_{E3}'$ such that
\begin{eqnarray}\label{e:lp-eta-lp}
\|\mathcal{LP}^\eta(\bar p) - \mathcal{LP}(\bar p)\|_\mathcal{E} \leq C_{\mathcal{LP}} \eta, \quad \forall \bar p \in \mathcal{B}_\rho^0.
\end{eqnarray}

4.
Let $\bar p=\mathcal{LP}(\bar p)$ and $\bar p^\eta=\mathcal{LP}^\eta(\bar p^\eta)$. By the triangle inequality and contractivity,
\begin{align*}
\|\bar p^\eta - \bar p\|_\mathcal{E}
&\le \|\mathcal{LP}^\eta(\bar p^\eta)-\mathcal{LP}(\bar p^\eta)\|_\mathcal{E}
+ \|\mathcal{LP}(\bar p^\eta)-\mathcal{LP}(\bar p)\|_\mathcal{E}\\
&\le C_{\mathcal{LP}}\eta + \kappa\|\bar p^\eta-\bar p\|_\mathcal{E},
\end{align*}
so $\|\bar p^\eta-\bar p\|_\mathcal{E}\le \frac{C_{\mathcal{LP}}\eta}{1-\kappa}$.
Choosing $\eta_0=\frac{(1-\kappa)\varepsilon}{C_{\mathcal{LP}}}$ gives $\|\bar p^\eta-\bar p\|_\mathcal{E}\le\varepsilon$.
Transforming back to original coordinates via the continuous, invertible maps $T,T^\eta$ yields \eqref{e:p-eta-p-local}.
\end{proof}

\subsection{Semiglobal estimate for the EDMD approximate stable manifold}\label{s:semiglobal}

We extend the local error estimate (Lemma \ref{l:p-error-local}) to the entire domain $\Omega$ via perturbation analysis of the ODE flows.

\textbf{Assumption 4.}
\emph{The bounded domain $\Omega$ contains the origin, and every $x\in\Omega$ is reachable from some initial point in $B_\rho(0)$ ($\rho\le\rho_0$) along the Hamiltonian characteristic flow \eqref{e:Hamiltonian-flow}.}

\textbf{Assumption 5.}
\emph{The stable manifold of the approximate characteristic system \eqref{e:Ham-flow-app} admits a $C^1$ graph representation $p^\eta(x)$ for all $x\in\Omega$.}

Let $F_H(Y)$ with $Y=(x,p)\in\mathbb{R}^{2n}$ denote the vector field of \eqref{e:Hamiltonian-flow}, with flow
$\Gamma(t,x,p(x)) = (X(t,x), P(t,x))$ for $x\in B_{\rho_0}(0)$,
where $\pi_x,\pi_p$ are the canonical projections onto $x$ and $p$ components, and $p(x)=\nabla V(x)$ is the gradient of the stabilizing HJB solution.
Correspondingly, let $F_H^\eta(Y)$ be the EDMD approximated vector field \eqref{e:Ham-flow-app}, with flow
$\Gamma^\eta(t,x,p^\eta(x)) = (X^\eta(t,x), P^\eta(t,x))$ for $x\in B_{\rho_0}(0)$.

For $\rho\in(0,\rho_0)$, define the forward maps
\begin{align*}
&X:\mathbb R_+\times B_{\rho}(0)\to \Omega, \quad (t,x)\to X(t,x),\\
&X^{\eta}:\mathbb R_+\times B_{\rho}(0)\to \Omega, \quad (t,x)\to X^{\eta}(t,x).
\end{align*}
For $T,r>0$ with $B_r(x_0)\subset B_{\rho_0}(0)$, denote the images of the ball under the two flows by
$X(T,B_r(x_0))$ and $X^\eta(T,B_r(x_0))$.

\begin{lemma}\label{l:local-error}
Suppose that Assumptions 2--5 hold. For any fixed $x_0\in B_{\rho_0}(0)$, let $r>0$ such that $B_r(x_0)\subset B_{\rho_0}(0)$, and let $T>0$. Then $X(T,B_r(x_0))$ and $X^{\eta}(T,B_r(x_0))$ are open, and for all $x\in B_r(x_0)$,
\begin{align}
\|X^{\eta}(T,x)-X(T,x)\| &\leq C(T)\eta,\label{e:X-eta-X}\\
\norm{P^\eta(T, x) - P(T, x)} &\leq C(T)\eta,\label{e:P-eta-P}
\end{align}
where $C(T)>0$ depends only on $T$.
\end{lemma}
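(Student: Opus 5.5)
The plan is to run a standard finite-time ODE perturbation (Gronwall) argument on the compact time interval $[0,T]$. The only extra ingredient is the local manifold error from Lemma \ref{l:p-error-local}, which controls the mismatch of the initial data $p(x)$ versus $p^\eta(x)$.

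\emph{Openness.} For fixed $T$, the time-$T$ maps $x\mapsto X(T,x)$ and $x\mapsto X^\eta(T,x)$ are restrictions of flows to graphs of $C^1$ functions ($p=\nabla V$ by Remark \ref{r:regularity}, and $p^\eta$ by Assumption 5). These graphs are invariant under the respective flows, so the $x$-dynamics on them are the autonomous reduced systems
\[
\dot x=f(x)-R(x)p(x),\qquad \dot x=f^\eta(x)-R^\eta(x)p^\eta(x).
\]
Both have $C^1$ right-hand sides. Their time-$T$ maps are therefore $C^1$ diffeomorphisms onto their images, with inverse given by the backward flow. Hence the images of the open ball $B_r(x_0)$ are open. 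Well-posedness up to time $T$ follows from the forward invariance in Assumption 4 and from the boundedness of $\Omega$.

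\emph{Initial-data estimate.} Set $Y(t)=\Gamma(t,x,p(x))$ and $Y^\eta(t)=\Gamma^\eta(t,x,p^\eta(x))$. Lemma \ref{l:p-error-local} gives, after shrinking $\rho_0\le\sigma_0$,
\[
\|Y^\eta(0)-Y(0)\|=\|p^\eta(x)-p(x)\|\le\varepsilon\|x\|\le \varepsilon\rho_0 .
\]
Inspecting its proof, $\varepsilon$ may be taken as $C_{\mathcal{LP}}\eta/(1-\kappa)$ times the norm of the transformation $T^\eta$. This makes the initial discrepancy $O(\eta)$ rather than merely small. Making this linear dependence explicit is the first step.

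\emph{Gronwall step.} Write
\[
\frac{d}{dt}(Y^\eta-Y)=F_H^\eta(Y^\eta)-F_H^\eta(Y)-\Lambda(Y),
\]
using Lemma \ref{l:F-error}, so that $F_H=F_H^\eta+\Lambda$. Both trajectories stay in a compact set $K\subset\Omega\times\Gamma$ for $t\in[0,T]$: the $x$-components lie in $\Omega$, and the $p$-components are bounded by continuity of $p$ and $p^\eta$ on $\bar\Omega$, with $\Gamma$ chosen to contain them. On $K$ the following bounds hold:
\begin{itemize}
\item $F_H^\eta$ is Lipschitz with a constant $L_K$ uniform in $\eta\in[0,\eta_0)$, since $f^\eta,g^\eta,\theta^\eta,\chi^\eta$ are $\eta$-close to the smooth data;
\item $\|\Lambda(Y)\|\le C_{f,g,W}\eta\|Y\|\le C_{f,g,W}\,\eta\,\mathrm{diam}(K)$.
\end{itemize}
Gronwall then yields
\[
\|Y^\eta(T)-Y(T)\|\le e^{L_KT}\bigl(\|Y^\eta(0)-Y(0)\|+C_{f,g,W}\,\mathrm{diam}(K)\,T\,\eta\bigr)\le C(T)\eta .
\]
Projecting with $\pi_x$ and $\pi_p$ gives \eqref{e:X-eta-X} and \eqref{e:P-eta-P}.

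\emph{Main obstacle.} The delicate point is the uniformity of $K$ and $L_K$ in $\eta$. We must ensure the approximate trajectory $Y^\eta$ does not leave the region where Lemma \ref{l:F-error} and the Lipschitz bounds hold. I would handle this with a bootstrap. Take a neighborhood $K'$ of the exact trajectory tube. Let $t^*$ be the first exit time of $Y^\eta$ from $K'$. The Gronwall bound on $[0,t^*]$ shows that $Y^\eta$ stays within $C(T)\eta$ of the tube. For $\eta$ small this is less than $\mathrm{dist}(K,\partial K')$, which forces $t^*\ge T$. This restricts $\eta_0$ in a way that depends on $T$, which is consistent with $C(T)$ depending only on $T$.
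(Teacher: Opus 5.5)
Your proposal follows the same route as the paper. The paper also gets openness from the time-$T$ map on the invariant graph being a local diffeomorphism. It also takes the initial mismatch $\|p^\eta(x)-p(x)\|\le C_p\eta$ from the Lyapunov--Perron estimate in Lemma~\ref{l:p-error-local}, using exactly the linear-in-$\eta$ reading you make explicit. It then applies a finite-time ODE perturbation bound on $[0,T]$ and projects with $\pi_x,\pi_p$. Two of your deviations are improvements. Phrasing openness through the reduced $C^1$ flows $\dot x=f(x)-R(x)p(x)$ and $\dot x=f^\eta(x)-R^\eta(x)p^\eta(x)$ is cleaner than the paper's appeal to the open mapping theorem for $\pi_x$. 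Your exit-time bootstrap supplies something the paper simply assumes: that $Y^\eta$ stays in the compact set where the Lipschitz and perturbation bounds are valid.

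One step needs repair. You split the error as $F_H^\eta(Y^\eta)-F_H^\eta(Y)-\Lambda(Y)$ and claim a Lipschitz constant of $F_H^\eta$ uniform in $\eta$, because $f^\eta,g^\eta,\theta^\eta,\chi^\eta$ are $\eta$-close to smooth data. Theorem~\ref{t:bounds-aug-general} only gives pointwise (sup-norm) closeness, and that gives no control of Lipschitz constants. Note also that $\theta^\eta$ approximates $\nabla f$, not $\nabla f^\eta$. The claim happens to be true here, but for a different reason: these maps are a fixed finite smooth dictionary multiplied by EDMD matrices that stay bounded once the operator error is small. The simpler fix is the paper's split, $\frac{d}{dt}(Y^\eta-Y)=F_H(Y^\eta)-F_H(Y)-\Lambda(Y^\eta)$. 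It only uses the Lipschitz constant of the fixed smooth field $F_H$, together with Lemma~\ref{l:F-error} evaluated along $Y^\eta$. Your bootstrap is exactly what makes that evaluation legitimate.

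Two smaller remarks. First, Assumption 4 is a reachability hypothesis, not forward invariance. Existence of the exact trajectories on $[0,T]$ should instead come from the exponential decay estimates on the local stable manifold, with your bootstrap handling $Y^\eta$. Second, converting the Lyapunov--Perron bound back to original coordinates needs $\|T^\eta-T\|=O(\eta)$, not merely a bound on $\|T^\eta\|$, because $p$ and $p^\eta$ are graphs over different coordinate systems. This holds because the stable spectral projection and the Sylvester solution $S^\eta$ depend smoothly on the entries of $\operatorname{Ham}^\eta$, which differ from those of $\operatorname{Ham}$ by $O(\eta)$. The paper glosses over the same point.
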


\begin{proof}
1.
Since $F_H$ is smooth, the flow $\Gamma(t,\cdot,\cdot):\R^{2n}\to\R^{2n}$ is a $C^\infty$ diffeomorphism for each fixed $t$. Define $S(x)=\pi_x\circ\Gamma(T,x,p(x))$ for $x\in B_r(x_0)$. Since $p(x)=\nabla V(x)\in C^1$ (Remark~\ref{r:regularity}) and the embedding $x\mapsto(x,p(x))$ has full-rank Jacobian $\begin{bmatrix}I_n\\\nabla p(x)\end{bmatrix}$, the map $S$ is $C^1$. By the invariance of the stable manifold (Assumption~5),
$
\Gamma(T,x,p(x))=(X(T,x),\,p(X(T,x)))^\top, \forall x\in B_r(x_0),
$
and $\Gamma(T,B_r(x_0), p(B_r(x_0)))$ is open on the stable manifold.
The projection \(\pi_{x}:\mathbb R^{2n} \to \R^n\) is a surjective linear map. By the Open Mapping Theorem for finite-dimensional spaces, surjective linear maps map open sets to open sets. Hence $DS(x)$ has rank $n$, so $S$ is a local diffeomorphism and $S(B_r(x_0))=X(T,B_r(x_0))$ is open. The same argument applies to $X^\eta(T,B_r(x_0))$.

2.
Let $Y(t,x)=(X(t,x),P(t,x))$ and $Y^\eta(t,x)=(X^\eta(t,x),P^\eta(t,x))$ denote the flows of \eqref{e:Hamiltonian-flow} and \eqref{e:Ham-flow-app}, respectively. We write the approximate system as
\[
\dot{Y}^\eta = F_H(Y^\eta) + \bigl[F_H^\eta(Y^\eta)-F_H(Y^\eta)\bigr],\, Y^\eta(0,x)=(x,p^\eta(x)).
\]
Since $F_H$ is smooth on the compact set $\bar\Omega\times\bar\Omega_p$, it is Lipschitz with constant $L_H$. By Lemma~\ref{l:F-error}, $\norm{F_H^\eta(Y)-F_H(Y)}\leq C_{f,g,W}M\eta$, and by Lemma~\ref{l:p-error-local}, $\norm{Y^\eta(0,x)-Y(0,x)}=\norm{p^\eta(x)-p(x)}\leq C_p\eta$. Applying classical perturbation results for ODE (\cite[Theorem 3.4]{khalil2002nonlinear}) yields
\[
\norm{Y^\eta(T,x)-Y(T,x)}\leq \Bigl(C_p e^{LT}+\tfrac{C_{f,g,W}M}{L}(e^{LT}-1)\Bigr)\eta.
\]
Since $\norm{\pi_x}\leq 1$ and $\norm{\pi_p}\leq 1$, projecting onto the $x$- and $p$-components gives \eqref{e:X-eta-X}--\eqref{e:P-eta-P} with $C(T)=C_p e^{LT}+\frac{C_{f,g,W}M}{L}(e^{LT}-1)$.
\end{proof}

Based on the results obtained above, we are in a position to show that the EDMD approximate stable manifold is sufficiently close to the exact stable manifold.
\begin{theorem}\label{t:p-error}
Under Assumptions 2--5 with i.i.d. training samples, for any probabilistic tolerance $\delta\in(0,1)$ and any $\varepsilon>0$, there exists $\eta_0>0$ such that for all $\eta<\eta_0$,
\begin{eqnarray}\label{e:p-eta-p}
\|p^{\eta}(x)-p(x)\|< \varepsilon, \quad \forall x\in \Omega,
\end{eqnarray}
with probability at least $1-\delta$.
\end{theorem}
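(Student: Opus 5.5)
The plan is to patch together the local estimate of Lemma \ref{l:p-error-local} with the finite-time flow comparison of Lemma \ref{l:local-error}, using a compactness argument over $\bar\Omega$. First I fix $\delta$ and $\varepsilon$. By Theorem \ref{t:bounds-aug-general} and Assumption 2, for each prescribed $\eta$ there is a sample threshold $d_0$ such that, on an event of probability at least $1-\delta$, the residual bounds \eqref{e:f-g-error-general} hold with $2\eta$ in place of $\eta$. All remaining steps are deterministic on this event. This is why the probability statement in \eqref{e:p-eta-p} is inherited without further loss.

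\textbf{Step 1 (near the origin).} Apply Lemma \ref{l:p-error-local} with tolerance $\varepsilon/\sigma_0$, or simply with $\varepsilon$ after shrinking $\sigma_0$ so that $\sigma_0\le 1$. This gives $\eta_1$ and $\sigma_0$ with $\|p^\eta(x)-p(x)\|\le\varepsilon\|x\|<\varepsilon$ on $B_{\sigma_0}(0)$. I take $\rho\le\min\{\rho_0,\sigma_0\}$ in Assumption 4.

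\textbf{Step 2 (transport to a point of $\Omega$).} For $x^*\in\bar\Omega$, Assumption 4 gives $x_0\in B_\rho(0)$ and $T^*\ge0$ with $X(T^*,x_0)=x^*$. Choose $r$ with $B_r(x_0)\subset B_\rho(0)$. By Lemma \ref{l:local-error}, $U_{x^*}:=X(T^*,B_r(x_0))$ is an open neighbourhood of $x^*$. Since the graphs are invariant, $p(X(T^*,x))=P(T^*,x)$ and $p^\eta(X^\eta(T^*,x))=P^\eta(T^*,x)$. For $z\in U_{x^*}$, write $z=X(T^*,x)$ and split
\begin{align*}
\|p^\eta(z)-p(z)\|\le \|p^\eta(z)-p^\eta(X^\eta(T^*,x))\|+\|P^\eta(T^*,x)-P(T^*,x)\|.
\end{align*}
The second term is at most $C(T^*)\eta$ by \eqref{e:P-eta-P}. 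For the first term I use the $C^1$ graph of Assumption 5 to get a Lipschitz constant $L_{p^\eta}$ on $\bar\Omega$, and then \eqref{e:X-eta-X}. Together these give the bound $(L_{p^\eta}+1)C(T^*)\eta$.

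\textbf{Step 3 (compactness).} Cover $\bar\Omega$ by finitely many sets $U_{x^*_1},\dots,U_{x^*_k}$ and let $\bar T=\max_i T_i^*$. I then choose
\[
\eta_0<\min\Bigl\{\eta_1,\ \varepsilon\big/\bigl((L+1)C(\bar T)\bigr)\Bigr\},
\]
where I take $C(T)$ increasing in $T$, as the explicit formula in Lemma \ref{l:local-error} shows. This yields \eqref{e:p-eta-p}.

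The main obstacle is that $L_{p^\eta}$ must be bounded uniformly in $\eta$; otherwise the choice of $\eta_0$ becomes circular. Assumption 5 only gives a $C^1$ graph for each fixed $\eta$. My first attempt would avoid $L_{p^\eta}$ entirely. I would use the local diffeomorphism $S^\eta(x)=X^\eta(T^*,x)$ and its inverse, and argue by the inverse function theorem that $S^\eta\to S$ in $C^1$ on $B_r(x_0)$. This convergence would come from differentiable dependence on parameters, since the variational equations also differ by $O(\eta)$. It then follows that $(S^\eta)^{-1}(z)$ and $S^{-1}(z)$ differ by $O(\eta)$. Writing $p^\eta(z)=P^\eta(T^*,(S^\eta)^{-1}(z))$ reduces the estimate to Lipschitz bounds of the smooth exact flow, which are uniform in $\eta$. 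A secondary issue is ensuring that $z$ lies in the image $S^\eta(B_r(x_0))$ for all small $\eta$. I would handle this by shrinking each $U_{x^*}$ to $X(T^*,B_{r/2}(x_0))$ and using the uniform closeness of $S^\eta$ to $S$.
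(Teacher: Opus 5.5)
Your final route, the fallback, is essentially the paper's proof. The paper fixes $x_1\in\Omega$ and finds $x_0^\eta\in B_r(x_0)$ with $X^\eta(T,x_0^\eta)=x_1$. It then writes $p^\eta(x_1)=P^\eta(T,x_0^\eta)$ and compares with $P(T,x_0)$, splitting the difference into two pieces. The first is an ODE-perturbation term from a common initial point. The second is controlled by the Lipschitz constants of the exact flow and of $p$, together with Lemma \ref{l:p-error-local}. Finally it covers $\bar\Omega$ by finitely many such neighbourhoods. You were right to drop your first split: it needs a Lipschitz bound on $p^\eta$ over $\bar\Omega$ that is uniform in $\eta$, and nothing in the paper supplies one.

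The one ingredient you lean on that is not available is the $C^1$ convergence $S^\eta\to S$. The variational equations of \eqref{e:Ham-flow-app} involve $\nabla f^\eta$, $\nabla g^\eta$, $\nabla\theta^\eta$ and $\nabla\chi^\eta$, and $DS^\eta$ also involves $Dp^\eta$. However, Lemma \ref{l:F-error} controls only $F_H^\eta-F_H$ in $C^0$, and Lemma \ref{l:p-error-local} controls only $p^\eta-p$ in a weighted $C^0$ norm. So the claim that ``the variational equations differ by $O(\eta)$'' is an extra hypothesis; the paper itself remarks that $\theta^\eta\neq\nabla f^\eta$ in general.

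You do not need this claim. For the distance estimate, use the Lipschitz inverse of the exact map, as the paper does. If $S^\eta(x^\eta)=z=S(x)$, then
$\|x^\eta-x\|=\|S^{-1}(S(x^\eta))-S^{-1}(S^\eta(x^\eta))\|\le L_{S^{-1}}C(T^*)\eta$ by \eqref{e:X-eta-X}. Here $S^{-1}$ is the backward time-$T^*$ map of the exact closed-loop flow $\dot x=f(x)-R(x)p(x)$, so its Lipschitz constant does not depend on $\eta$.

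For existence of $x^\eta$ when $z\in X(T^*,B_{r/2}(x_0))$, you only need $\sup_{\bar B_r(x_0)}\|S^\eta-S\|\le C(T^*)\eta$, used in either of two ways:
\begin{itemize}
\item apply Brouwer's theorem to $x\mapsto S^{-1}\bigl(z-(S^\eta-S)(x)\bigr)$ on a small closed ball around $S^{-1}(z)$;
\item note that $\deg(S^\eta,B_r(x_0),z)=\deg(S,B_r(x_0),z)=\pm1$ once $C(T^*)\eta<\operatorname{dist}(z,S(\partial B_r(x_0)))$.
\end{itemize}
This same argument is what makes the paper's own Step 2 rigorous. Openness of $X^\eta(T,B_r(x_0))$ alone does not give the inclusion $B_{r_1/4}(X^\eta(T,x_0))\subset X^\eta(T,B_r(x_0))$ that the paper asserts, so your instinct to treat surjectivity explicitly was sound.

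With these replacements, the rest of your argument goes through as you describe:
\begin{itemize}
\item $\|P^\eta(T^*,x^\eta)-P(T^*,x^\eta)\|\le C(T^*)\eta$ from \eqref{e:P-eta-P};
\item Lipschitz continuity of the exact map $P(T^*,\cdot)$;
\item the finite cover with uniform constants on $X(T^*,B_{r/2}(x_0))$;
\item the probability inherited from the single high-probability event.
\end{itemize}
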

\begin{proof}
The theorem will be proved by the following steps.

1. By Assumption 4,
for any $x_1 \in \Omega$, there exists an initial point $x_0 \in B_{0.9\rho_0}(0)$ and time $T > 0$ such that the orbit $X(T,x_0) = x_1$. Let $B_r(x_0)\subset B_{\rho_0}(0)$ be any open ball. From Lemma \ref{l:local-error}, we have that $X(T,B_r(x_0))$ and $X^{\eta}(T,B_r(x_0))$ are open, and for all $x\in B_r(x_0)$,
\begin{eqnarray}\label{e:X-error}
\|X^{\eta}(T,x)-X(T,x)\|\leq C(T)\eta,
\end{eqnarray}
where $C(T)$ is a constant depending only on $T$.

2. We prove the existence of $x_0^\eta \in B_r(x_0)$ close to $x_0$ such that $X^\eta(T,x_0^\eta) = x_1$.
Indeed, by Lemma \ref{l:local-error},  $X(T,B_r(x_0))$ is open. Since $x_1 = X(T,x_0) \in X(T,B_r(x_0))$, there exists $r_1 > 0$ with $B_{r_1}(x_1) \subset X(T,B_r(x_0))$.
Choose $\eta \leq \eta_1$, where $\eta_1>0$ satisfies $C(T)\eta_1 < \frac{r_1}{4}$. Then from \eqref{e:X-error}, it holds that
$
\|X^\eta(T,x_0) - x_1\| \leq C(T)\eta_1 < \frac{r_1}{4}.
$
Since $X^\eta(T,B_r(x_0))$ is open, \eqref{e:X-error} yields
\begin{eqnarray}
x_1\in B_{\frac{r_1}{4}}(X^\eta(T,x_0))\subset X^\eta(T,B_r(x_0)).
\end{eqnarray}
Hence there exists $x_0^{\eta}\in B_r(x_0)$ such that $X^\eta(T,x_0^\eta) = x_1$.
Compute
\begin{align}\label{e:x0-eta-x0}
&\|x_0^\eta - x_0\| = \|X^{-1}(T,X(T,x_0^\eta)) - X^{-1}(T,x_1)\| \\
&\leq L_{X^{-1}} \|X(T,x_0^\eta) - X^\eta(T,x_0^\eta)\| \leq L_{X^{-1}} C(T)\eta.\notag
\end{align}
Here, the inverse map $X^{-1}(T,\cdot)$ of the Hamiltonian flow is Lipschitz continuous with Lipschitz constant $L_{X^{-1}}$.

3. We verify the estimate \eqref{e:p-eta-p} for fixed $x_1\in \Omega$. By invariance of the stable manifolds, $p(x_1) = \pi_p \Gamma(T,x_0,p(x_0))$ and $p^\eta(x_1) = \pi_p \Gamma^\eta(T,x_0^\eta,p^\eta(x_0^\eta))$.
We decompose
\begin{align}
  & \|p^\eta(x_1) - p(x_1)\| \\
  &\leq \|\pi_p \Gamma^\eta(T,x_0^\eta,p^\eta(x_0^\eta)) - \pi_p \Gamma(T,x_0^\eta,p^\eta(x_0^\eta))\|\notag\\
    &+ \|\pi_p \Gamma(T,x_0^\eta,p^\eta(x_0^\eta)) - \pi_p \Gamma(T,x_0,p(x_0))\|:=T_1+T_2.\notag
\end{align}

For $T_1$, by the perturbation result for ODE (\cite[Theorem 3.4]{khalil2002nonlinear}), it holds that
$
T_1\le \|\pi_p\|\|\Gamma^\eta(T,x_0^\eta,p^\eta(x_0^\eta)) - \Gamma(T,x_0^\eta,p^\eta(x_0^\eta))\| \leq C'(T)\eta,
$
where $C'(T)$ is constant depending only on $T$. Choosing $\eta \leq \eta_2$ with $C'(T)\eta_2 < \frac{\varepsilon}{2}$ gives $T_1 < \frac{\varepsilon}{2}$.

For $T_2$, estimate that
$
T_2\leq L_\Gamma \cdot \|(x_0^\eta,p^\eta(x_0^\eta)) - (x_0,p(x_0))\|,
$
where $L_\Gamma$ is the Lipschitz constant of $\Gamma(T,\cdot,\cdot)$.
Decompose
$
     \|(x_0^\eta,p^\eta(x_0^\eta)) - (x_0,p(x_0))\| \leq \|x_0^\eta - x_0\| +\|p^\eta(x_0^\eta) - p(x_0)\|.
$
Choosing $\eta \leq \eta_3$ with $L_{X^{-1}}C'(T)\eta_3 < \frac{\varepsilon}{4L_p}$ where $L_p$ is the Lipschitz constant of $p$,  by \eqref{e:x0-eta-x0},
\begin{eqnarray}\label{e:x0-eta-x0-2}
\|x_0^\eta - x_0\| \leq L_{X^{-1}}C(T)\eta< \varepsilon/(4L_p).
\end{eqnarray}

Moreover, by the triangle inequality,
$
\|p^\eta(x_0^\eta) - p(x_0)\| \leq \|p^\eta(x_0^\eta) - p(x_0^\eta)\| + \|p(x_0^\eta) - p(x_0)\|.
$
Since $x_0^\eta \in B_r(x_0) \subset B_{\rho_0}(0)$, by Lemma \ref{l:p-error-local}, it holds that there exists an $\eta_4>0$ such that for all $\eta<\eta_4$,
$
\|p^\eta(x_0^\eta) - p(x_0^\eta)\| \leq \varepsilon/4.
$
Using \eqref{e:x0-eta-x0-2}, we get
$
\|p(x_0^\eta) - p(x_0)\| \leq L_p \|x_0^\eta - x_0\| < \frac{\varepsilon}{4}.
$
Thus choosing $\eta \leq \min\{\eta_1,\eta_2,\eta_3,\eta_4\}$ makes $T_2 < \frac{\varepsilon}{2}$ and
$T_1 + T_2 < \varepsilon.$

4.
For each $x \in \bar{\Omega}$, by Steps 1-3 above, there exists an open neighborhood $U_x \subset \Omega$ and $\eta_x > 0$ such that for all $\eta \leq \eta_x$ and $y \in U_x$, $\|p^\eta(y) - p(y)\| \leq \varepsilon$.
Since $\bar{\Omega}$ is compact, by the Finite Covering Theorem, there exist finitely many neighborhoods $U_{x_1}, U_{x_2}, \dots, U_{x_k}$ such that $\bar{\Omega} \subset \bigcup_{i=1}^k U_{x_i}$.
Let $\eta_{\min} = \min\{\eta_{x_1}, \eta_{x_2}, \dots, \eta_{x_k}\}$. Then for all $\eta \leq \eta_{\min}$ and $x \in \bar{\Omega}$, $\|p^\eta(x) - p(x)\| \leq \varepsilon$.

Finally, by Proposition \ref{p:app} and Theorem \ref{t:app-l-ld}, there exists $d_0 \in \mathbb{N}$ such that for $d \geq d_0$, the corresponding $\eta \leq \eta_{\min}$, with the result holding with probability $1 - \delta$.
\end{proof}

\subsection{Estimate for the data-driven optimal control}\label{s:error-op}

This subsection analyzes the closed-loop stability of the EDMD approximate optimal control and estimates the corresponding cost error.

The exact optimal control is given by
\begin{eqnarray}\label{e:u*1}
u(x)=-W^{-1}g(x)^\top p(x),
\end{eqnarray}
and its EDMD approximation reads
\begin{eqnarray}\label{e:uNN}
u^{\eta}(x)=-W^{-1}g^{\eta}(x)^\top p^{\eta}(x).
\end{eqnarray}

\begin{lemma}[Error bounds for EDMD-approximated optimal control]\label{l:control-error}
Under Assumptions 2--5, for any $\varepsilon > 0$, there exist $\rho_0 > 0$ and $\eta_0 > 0$ such that for all $\eta \in [0, \eta_0)$,
\begin{align}
&\| u^\eta(x) - u(x) \| \leq \varepsilon,\quad \forall x\in\Omega,\label{e:control-error-global}\\
&\| u^\eta(x) - u(x) \| \leq \varepsilon \| x \|, \quad \forall x \in B_{\rho_0}(0).\label{e:control-error-local}
\end{align}
\end{lemma}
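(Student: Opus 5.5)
The plan is to split the control error by adding and subtracting $W^{-1}g(x)^\top p^\eta(x)$:
\begin{align*}
u^\eta(x)-u(x) = -W^{-1}\bigl(g^\eta(x)-g(x)\bigr)^\top p^\eta(x) - W^{-1}g(x)^\top\bigl(p^\eta(x)-p(x)\bigr).
\end{align*}
The two terms are then bounded with the earlier results. For the first, Theorem \ref{t:bounds-aug-general} together with Assumption 2 gives $\|g^\eta(x)-g(x)\|=\|\beta_1(x)\|\le 2\eta$ for all $x\in\Omega$. For the second, $g$ is smooth on the compact set $\bar\Omega$ by $(\mathbf{A_{31}})$, so $\|g(x)\|\le C_g$ and $\|W^{-1}\|$ is a fixed constant. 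This yields
\begin{align*}
\|u^\eta(x)-u(x)\|\le \|W^{-1}\|\bigl(2\eta\,\|p^\eta(x)\| + C_g\|p^\eta(x)-p(x)\|\bigr).
\end{align*}

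For the global bound \eqref{e:control-error-global}, I need a uniform bound on $\|p^\eta(x)\|$. Since $p=\nabla V$ is continuous on $\bar\Omega$ (Remark \ref{r:regularity}), we have $\|p\|\le C_p$. Theorem \ref{t:p-error} with tolerance $1$ then gives $\|p^\eta\|\le C_p+1$ on $\Omega$ for small $\eta$. Next I apply Theorem \ref{t:p-error} again with tolerance $\varepsilon/(2C_g\|W^{-1}\|)$, which produces some $\eta'$. Finally I take $\eta_0\le\eta'$ small enough that $2\eta_0\|W^{-1}\|(C_p+1)\le\varepsilon/2$.

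For the local bound \eqref{e:control-error-local}, both terms must carry a factor of $\|x\|$. The second term comes from Lemma \ref{l:p-error-local} with tolerance $\varepsilon/(2C_g\|W^{-1}\|)$: it gives $\|p^\eta(x)-p(x)\|\le \frac{\varepsilon}{2C_g\|W^{-1}\|}\|x\|$ on some $B_{\sigma_0}(0)$. For the first term I use linear growth of $p^\eta$ near the origin. Theorem \ref{t:stable-manifolds} gives $\|p(x)\|\le (\|P\|+k\rho_0)\|x\|$, and combined with Lemma \ref{l:p-error-local} this yields $\|p^\eta(x)\|\le C_1\|x\|$ on the ball. Alternatively, $p^\eta\in\mathcal B^0_\rho$ has Lipschitz constant $\rho$ and $p^\eta(0)=0$, which gives the same conclusion directly. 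The first term is then at most $2\eta\|W^{-1}\|C_1\|x\|$, and I shrink $\eta_0$ until this is at most $\frac{\varepsilon}{2}\|x\|$. I set $\rho_0:=\sigma_0$, or anything smaller than both $\sigma_0$ and the radius where the local graph representation holds.

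The main obstacle is bookkeeping in the local estimate. The first term must vanish linearly at the origin, but $\|\beta_1\|\le 2\eta$ is only a constant bound, so the factor $\|x\|$ has to come entirely from $p^\eta(x)=O(\|x\|)$ uniformly in $\eta$. This requires $p^\eta(0)=0$ and a Lipschitz constant for $p^\eta$ that does not depend on $\eta$. Both are provided by the Lyapunov–Perron setting in the proof of Lemma \ref{l:p-error-local}, where $p^\eta\in\mathcal B^0_\rho$ for all $\eta<\eta_0$. As in Theorem \ref{t:p-error}, all these statements hold on the event of probability at least $1-\delta$ on which the EDMD bounds are valid.
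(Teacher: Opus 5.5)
Your proof is correct and is essentially the paper's argument. It uses the same ingredients: the $\beta_1$ bound on $g^\eta-g$, Theorem \ref{t:p-error} for the global estimate, Lemma \ref{l:p-error-local} together with $p(x)=Px+O(\|x\|^2)$ from Theorem \ref{t:stable-manifolds} for the local one, and the same two-term triangle inequality. The only difference is that the paper adds and subtracts $g^\eta(x)^\top p(x)$ rather than $g(x)^\top p^\eta(x)$, so $g^\eta-g$ is multiplied by the exact $p$, whose bounds $\|p\|\le C_p$ and $\|p(x)\|\le C_p'\|x\|$ are immediate. That split removes the obstacle you flag, namely a linear bound on $p^\eta$ that is uniform in $\eta$; your resolution of it via Lemma \ref{l:p-error-local} is nonetheless valid, and your factor $2\eta$ under Assumption 2 is more careful than the paper's $\eta$.
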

\begin{proof}
1. The control error can be written as
\begin{align}\label{e:control-error-decomp}
&\| u^\eta(x) - u(x) \| \\
&\leq \| W^{-1} \|  \| g^\eta(x)^\top p^\eta(x) - g(x)^\top p(x) \|.\notag
\end{align}
Triangle inequality yields
\begin{align}\label{e:term-decomp}
&\| g^\eta(x)^\top p^\eta(x) - g(x)^\top p(x) \| \\
&\leq \| g^\eta(x) \| \cdot \| p^\eta(x) - p(x) \| + \| g^\eta(x) - g(x) \| \cdot \| p(x) \|.\notag
\end{align}

2. We first prove the global error bound \eqref{e:control-error-global}. By Assumption 3 and Assumption 4, $g \in C^\infty(\bar{\Omega})$, $p \in C^1(\bar{\Omega})$. Thus, there exist constants $C_g > 0$ and $C_p > 0$ such that
$
\| g(x) \| \leq C_g, \| p(x) \| \leq C_p, \forall x \in \Omega.
$
By Theorem \ref{t:bounds-aug-general} and Assumption 2, there exists $\eta_1 > 0$ such that for all $\eta \in [0, \eta_1)$,
\begin{align}\label{e:g-eta-g}
\| g^\eta(x) - g(x) \| \leq \eta, \quad \forall x \in \Omega.
\end{align}
Letting $\eta \leq 1$, this implies
$
\| g^\eta(x) \| \leq \| g(x) \| + \| g^\eta(x) - g(x) \| \leq C_g + 1.
$
By Theorem \ref{t:p-error}, for any $\varepsilon > 0$, there exists $\eta_2 > 0$ such that for all $\eta \in [0, \eta_2)$,
$
\| p^\eta(x) - p(x) \| < \frac{\varepsilon}{2 \| W^{-1} \| (C_g + 1)}, \forall x \in \Omega.
$
Choose
$
\eta_0 = \min\left\{ \eta_1, \eta_2, \frac{\varepsilon}{2 \| W^{-1} \| C_p} \right\}.
$
Then, for all $\eta \in [0, \eta_0)$, \eqref{e:term-decomp} gives
$
\| g^\eta(x) \| \| p^\eta(x) - p(x) \| \leq (C_g + 1)\frac{\varepsilon}{2 \| W^{-1} \| (C_g + 1)} = \frac{\varepsilon}{2 \| W^{-1} \|},
$
and
$
\| g^\eta(x) - g(x) \| \| p(x) \| \leq \eta C_p \leq \frac{\varepsilon}{2 \| W^{-1} \| C_p}  C_p = \frac{\varepsilon}{2 \| W^{-1} \|}.
$
Combining these bounds with \eqref{e:control-error-decomp} yields \eqref{e:control-error-global}.

3. We now prove the local error bound \eqref{e:control-error-local}. By Lemma \ref{l:p-error-local}, for any $\varepsilon > 0$, there exist $\rho_0 > 0$ and $\eta_3 > 0$ such that for all $\eta \in [0, \eta_3)$,
$
\| p^\eta(x) - p(x) \| \leq \frac{\varepsilon}{2 \| W^{-1} \| (C_g + 1)} \| x \|,  \forall x \in B_{\rho_0}(0).
$
By Theorem \ref{t:stable-manifolds}, $p(x) = Px + O(\| x \|^2)$ near the origin. Thus, there exists a constant $C_p' > 0$ such that
$
\| p(x) \| \leq C_p' \| x \|, \forall x \in B_{\rho_0}(0).
$
Choosing
$
\eta_0' = \min\left\{ \eta_3, \eta_4, \frac{\varepsilon}{2 \| W^{-1} \| C_p'} \right\},
$
from \eqref{e:g-eta-g}, we have that for all $\eta \in [0, \eta_0')$,
$
\| g^\eta(x) \|  \| p^\eta(x) - p(x) \| \leq (C_g + 1)  \frac{\varepsilon}{2 \| W^{-1} \| (C_g + 1)} \| x \| = \frac{\varepsilon}{2 \| W^{-1} \|} \| x \|,
$
and
$
\| g^\eta(x) - g(x) \|  \| p(x) \| \leq \eta  C_p' \| x \| \leq \frac{\varepsilon}{2 \| W^{-1} \| C_p'} C_p' \| x \| = \frac{\varepsilon}{2 \| W^{-1} \|} \| x \|.
$
Combining these bounds with \eqref{e:control-error-decomp} yields \eqref{e:control-error-local}.

Finally, taking $\eta_0 = \min\{ \eta_0, \eta_0' \}$ completes the proof.
\end{proof}

Let $x(t)$ denote the solution of the exact closed-loop system
\begin{eqnarray}\label{e:x-closed}
\dot{x}=f(x)-g(x)u(x),\quad x(0)=x_0,
\end{eqnarray}
and $x^\eta(t)$ the solution of the EDMD closed-loop system
\begin{eqnarray}\label{e:x-closed-app}
\dot{x}=f(x)-g(x)u^{\eta}(x),\quad x(0)=x_0.
\end{eqnarray}
By Assumption $(\mathbf{A_{31}})$, $f,g,R$ are smooth and Lipschitz on $\bar\Omega$, and $p$ is Lipschitz on $\Omega$ (Remark \ref{r:regularity}).
\begin{theorem}\label{t:decay-xnn}
Under Assumptions 2--5, for any $\varepsilon>0$, there exists $\eta_0>0$ such that for all $\eta\in (0,\eta_0)$, the closed-loop trajectory $x^\eta(t)$ decays exponentially as $t\to\infty$, and
\begin{eqnarray}\label{e:x-eta-error}
\|x^{\eta}(t)-x(t)\|<\varepsilon, \quad \forall t\ge 0,
\end{eqnarray}
for all $x_0\in \Omega$. The cost error satisfies
\begin{eqnarray}
|J(x^{\eta},u^{\eta})-J(x,u)|<C_{f,g, W}\varepsilon
\end{eqnarray}
for some constant $C_{f,g,W}$ depending only on $f,g,W$.
\end{theorem}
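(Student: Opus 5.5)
The plan is to split $\Omega$ into a neighbourhood of the origin, where local exponential stability can be obtained by a Lyapunov argument, and the rest of $\Omega$, where a finite-horizon perturbation estimate suffices. The two are then glued together with the cost estimate. Throughout I read the closed loops as $\dot x=f(x)+g(x)u(x)$ and $\dot x=f(x)+g(x)u^\eta(x)$, consistent with \eqref{e:system-closed}. The sign in \eqref{e:x-closed} is only a convention.

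\textbf{Step 1: local exponential stability.} Write the perturbed closed loop as
\[
\dot x = f(x)+g(x)u(x) + g(x)\bigl(u^\eta(x)-u(x)\bigr).
\]
The nominal part has linearization $\Theta=A-R(0)P$, which is Hurwitz. Let $M>0$ solve $\Theta^\top M+M\Theta=-I$, and use $V_L(x)=x^\top Mx$ as a Lyapunov function. Near the origin, $f+gu=\Theta x+O(\|x\|^2)$. By \eqref{e:control-error-local} in Lemma \ref{l:control-error}, the perturbation satisfies $\|g(u^\eta-u)\|\le C_g\varepsilon'\|x\|$ on $B_{\rho_0}(0)$. Choosing $\varepsilon'$ and then $\rho_1\le\rho_0$ small gives
\[
\dot V_L\le -\tfrac12\|x\|^2 \quad \text{on } B_{\rho_1}(0).
\]
Hence every sublevel set of $V_L$ inside $B_{\rho_1}(0)$ is forward invariant, and trajectories entering it decay as $\|x^\eta(t)\|\le K_1e^{-\mu t}\|x^\eta(t_0)\|$, with $K_1,\mu$ independent of $\eta<\eta_0$. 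The same estimate holds for the exact $x(t)$.

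\textbf{Step 2: reaching the local ball in uniform time.} The exact closed loop is exponentially stable on $\Omega$, which is implicit in the stabilizing property together with compactness of $\bar\Omega$. So there is a uniform $T_*$ such that $x(T_*)$ lies in the sublevel set $\{V_L\le c/4\}\subset B_{\rho_1}(0)$ for all $x_0\in\Omega$. To make $T_*$ uniform, I would cover $\bar\Omega$ by finitely many neighbourhoods using continuous dependence on initial data.

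On $[0,T_*]$, the Gronwall / ODE perturbation bound (\cite[Theorem 3.4]{khalil2002nonlinear}) applies. The vector fields differ by $g(u^\eta-u)$, which is bounded by $C_g\varepsilon'$ by \eqref{e:control-error-global}. This yields
\[
\|x^\eta(t)-x(t)\|\le \tfrac{C_g\varepsilon'}{L}\bigl(e^{LT_*}-1\bigr)\quad\text{on }[0,T_*],
\]
where $L$ is the Lipschitz constant of $f+gu$. Taking $\varepsilon'$ small makes this smaller than $\varepsilon$. It also makes it small enough that $x^\eta(T_*)\in\{V_L\le c/2\}$, so Step 1 applies to $x^\eta$ after time $T_*$. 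This gives exponential decay of $x^\eta$.

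For $t\ge T_*$, both trajectories sit in the local ball, so
\[
\|x^\eta(t)-x(t)\|\le\|x^\eta(t)\|+\|x(t)\|\le 2K_1e^{-\mu(t-T_*)}\rho_1 .
\]
This is below $\varepsilon$ once $\rho_1$ (that is, the target sublevel set) is chosen small relative to $\varepsilon$ at the outset. This fixes the order of choices: $\varepsilon$, then $\rho_1$, then $T_*$, then $\varepsilon'$, then $\eta_0$ via Lemma \ref{l:control-error}. Combining the two time ranges gives \eqref{e:x-eta-error}.

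\textbf{Step 3: cost estimate.} I split $|J(x^\eta,u^\eta)-J(x,u)|$ into integrals over $[0,T_*]$ and $[T_*,\infty)$.

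On $[0,T_*]$, the running cost $q(x)+\tfrac12u^\top Wu$ is Lipschitz on the compact set $\bar\Omega\times\{\|u\|\le C\}$. Combined with $\|x^\eta-x\|<\varepsilon$ and the control bound
\[
\|u^\eta(x^\eta)-u(x)\|\le\varepsilon'+L_u\|x^\eta-x\|,
\]
this piece is $O(T_*\varepsilon)$.

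On the tail, $q(x)\le C\|x\|^2$ and $\|u\|,\|u^\eta\|\le C\|x\|$ near the origin. The exponential decay then bounds the tail by $C\rho_1^2/\mu$, which is $O(\varepsilon)$ by the choice of $\rho_1$.

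\textbf{Main obstacle.} The delicate point is making the constant in front of $\varepsilon$ independent of $\varepsilon$, because $T_*$ grows as $\rho_1\downarrow0$. I would handle this by fixing $\rho_1$ (hence $T_*$) once, independently of $\varepsilon$, using only the Lyapunov margin. I would then obtain the small tail difference from the linear-in-$\|x\|$ local estimates: the difference $x^\eta-x$ on $[T_*,\infty)$ obeys a perturbed linear-type equation with forcing $\le C\varepsilon'\|x\|$, giving $\|x^\eta(t)-x(t)\|\le C\varepsilon' e^{-\mu(t-T_*)}$. The constant $C_{f,g,W}$ then absorbs $T_*$, $e^{LT_*}$ and $1/\mu$, all of which are fixed before $\varepsilon$ is chosen.
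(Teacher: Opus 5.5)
Your proposal is correct and follows essentially the same route as the argument the paper defers to (\cite[Theorem III.2]{chen2024deep}). That route has three parts: local exponential stability near the origin from the linear-in-$\|x\|$ control error \eqref{e:control-error-local}, a Gronwall estimate on a uniform finite horizon (obtained by compactness) using \eqref{e:control-error-global}, and splitting the cost at that time. One correction: the fact that every exact closed-loop trajectory from $\bar\Omega$ reaches $B_{\rho_1}(0)$ in finite time comes from Assumption 4 and the graph representation of the stable manifold over $\Omega$ (Remark \ref{r:regularity}), not from the stabilizing property, which gives only local asymptotic stability.
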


\begin{proof}
The proof is analogous to \cite[Theorem III.2]{chen2024deep} and is omitted.
\end{proof}

\section{Algorithms}\label{s:algorithm}

\subsection{Constraint Approximated-SSD Algorithm for Augmented Control Systems}\label{s:A-SSD}

To identify approximate invariant subspaces of the Lie operator for the augmented system, we adopt the Approximated-SSD algorithm originally proposed in \cite[Section VII]{haseli2021learning}, with the strict constraint that the state observables $x=(x_1,\dots,x_n)$ and $y=(y_1,\dots,y_n)$ are retained in the subspace. These are essential for extracting system dynamics in subsequent steps. The goal is to obtain a low-dimensional approximate invariant subspace that supports reliable EDMD operator approximation and system analysis.

For the augmented control system \eqref{e:augment-system2}, we collect two types of i.i.d. samples: zero-control samples $D_0 = \{(\omega_j^0, \mathcal{L}^0\Psi(\omega_j^0))\}$ with $\omega_j^0\in\Omega$, and basis-control samples $D_i = \{(\omega_j^{E_i}, \mathcal{L}^{E_i}\Psi(\omega_j^{E_i}))\}$, where $E_i$ is the $i$-th standard unit control vector and $\mathcal{L}^{E_i}$ the corresponding basis-control Lie operator.

The observable dictionary $\mathbb{V} = \operatorname{span}\{\Psi(\xi)\}$ is constructed as specified in \eqref{e:observables}. All basis functions are at most first-order in the auxiliary variable $y$ and are linearly independent. The Approximated-SSD framework systematically prunes redundant basis functions while preserving the core observables required for dynamics extraction. For brevity, we do not reproduce the detailed algorithmic steps here; readers are referred to \cite[Section VII]{haseli2021learning} for the complete theoretical formulation and implementation guidelines of the Approximated-SSD framework.

\subsection{EDMD Algorithm for Augmented Control Systems}\label{s:EDMD-augmented}

We present an EDMD algorithm for the control-affine augmented system \eqref{e:augment-system2}, consisting of six steps outlined as follows:

\textit{Step 1 (Data collection):}
We collect two types of i.i.d. samples: zero-control samples \(D_0 = \{(\omega_j^0, F(\omega_j^0))\}\) and basis-control samples \(D_i = \{(\omega_j^{E_i}, F(\omega_j^{E_i}) + G(\omega_j^{E_i})E_i)\}\) for each unit control vector \(E_i\). These are merged into the training set \(X_{\text{train}} = \bigcup_{i=0}^m D_i\), and an independent validation set \(X_{\text{val}}\) is collected separately.

\textit{Step 2 (Dictionary construction):}
The dictionary follows specifications \eqref{e:observables}, with two core principles: (1) mandatory core observables \(x_1, \dots, x_n, y_1, \dots, y_n\) for state extractability; (2) extended observables are low-order polynomials (e.g., \(x_1^2, x_1y_1\)) that are at most first-order in \(y\).

\textit{Step 3 (Constrained approximate invariant subspace extraction):}
We extract an approximate Koopman-invariant subspace via null space analysis as explained in Section \ref{s:A-SSD}, with a critical constraint that core observables \(x=(x_1,\cdots,x_n)\) and \(y=(y_1,\cdots,y_n)\) are strictly retained (required for valid error bounds in Theorem \ref{t:bounds-aug-general} and Assumption 2).

\textit{Step 4 (EDMD Lie operator approximation):}
Using the reduced dictionary in Step 3, we split data into zero-control matrices \(X_0, Y_0\) and basis-control matrices \(X_{E_i}, Y_{E_i}\), then transpose all matrices to the standard EDMD format \((N_{d,\text{reduced}} \times d)\). We compute operators via Moore-Penrose pseudoinverse: \(\mathcal{L}_d^0 = Y_0 X_0^\dagger\), \(\mathcal{L}_d^{E_i} = Y_{E_i} X_{E_i}^\dagger\), and arbitrary control operator \(\mathcal{L}_d^U = \mathcal{L}_d^0 + \sum_{i=1}^M U_i(\mathcal{L}_d^{E_i} - \mathcal{L}_d^0)\) by linear superposition.

\textit{Step 5 (System component extraction):}
We use projection operators \(P_x \in \mathbb{R}^{n \times 2n}\) (extract \(x\)-component) and \(P_y \in \mathbb{R}^{n \times 2n}\) (extract \(y\)-component) to obtain:
(1) drift term: \(f^\eta(x) = P_x(\mathcal{L}_d^0 \cdot \Psi(x,y))\);
(2) control matrix: \(g_i^\eta(x) = P_x((\mathcal{L}_d^{E_i} - \mathcal{L}_d^0) \cdot \Psi(x,y))\);
(3) gradients: \(\theta^\eta(x)\) and \(\chi_i^\eta(x)\) which are approximations for $\nabla f$ and $\nabla g(x)$ in \eqref{e:aug-field} respectively.

\textit{Step 6 (Error verification and outputs):}
We verify four error indicators on \(X_{\text{val}}\): \(\alpha_1 = \|f-f^\eta\| \leq \eta\|x\|\), \(\alpha_2 = \|\nabla f-\theta^\eta\| \leq \eta\), \(\beta_1 = \|g-g^\eta\| \leq \eta\), \(\beta_2 = \|\nabla g-\chi^\eta\| \leq \eta\). We report mean and standard deviation, allowing minor overflows with warnings (core bounds hold with probability \(1-\delta\)). The algorithm outputs: (1) approximate invariant subspace results; (2) approximate Lie operator; (3) system components; (4) verification report.

\subsection{EDMD Stable Manifold}\label{s:algorithm-sm}

This section presents the stable manifold algorithm for the EDMD-approximated characteristic system \eqref{e:Ham-flow-app}, building upon the augmented system EDMD results established in Section \ref{s:EDMD-augmented}. Further details on deep learning-based stable manifold approximation can be found in \cite{chen2024deep} and \cite{chen2025h}.

\textit{Step 1. Building the EDMD-approximated characteristic System \eqref{e:Ham-flow-app}:}
From the algorithm in Section \ref{s:EDMD-augmented}, we extract system components $f^\eta(x)$, $g_i^\eta(x)$, $i\in[1:m]$, $\theta^\eta(x)$ and $\chi^\eta(x)$. Then we define auxiliary matrices $R^\eta(x)$ and $\tilde{R}_d(x)$, and construct the EDMD approximated characteristic system \eqref{e:Ham-flow-app}. By Lemma \ref{l:app-condition}, $\text{Ham}^\eta$ \eqref{e:Ham-mat-app} is hyperbolic for small $\eta$, guaranteeing a stable subspace.

\textit{Step 2. Deep learning for approximate stable manifold:}
For the EDMD-approximated characteristic system \eqref{e:Ham-flow-app}, we adapt the method from \cite[Section IV]{chen2024deep} with two modifications:

(1) \textit{Network architecture.} Starting from an initial network $p^{\text{NN}}_o(\theta,x)$, we enforce $p^{\text{NN}}(\theta,0)=0$ and $\partial_x p^{\text{NN}}(\theta,0)=P^\eta$ (the stabilizing solution to \eqref{e:general-Riccati}) via
\begin{equation}\label{e:NN-modified}
p^{\text{NN}}(\theta,x)
= p^{\text{NN}}_o(\theta,x) - p^{\text{NN}}_o(\theta,0)- \rho(x)\left[\frac{\partial p^{\text{NN}}_o}{\partial x}(\theta,0)x - P^{\eta}x\right],
\end{equation}
where $\rho(x)$ is a smooth cutoff function: $1$ for $|x|<r$, $0$ for $|x|\geq kr$, with $r>0$, $k>1$.

(2) \textit{Composite loss.} The loss combines mean and maximum prediction errors:
\begin{equation}\label{e:loss}
\mathcal L^\nu(\theta;\mathcal D)
:= \sigma_1 \cdot \frac{1}{|\mathcal D|}\sum_{i=1}^{|\mathcal D|}\|p_i-p^{\text{NN}}(\theta; x_i)\|^\nu
+ \sigma_2 \cdot \max_{i}\|p_i-p^{\text{NN}}(\theta; x_i)\|,
\end{equation}
with weights $\sigma_1,\sigma_2>0$ and $\nu\in[1,\infty]$.

\textit{Step 3. Computation of the approximate optimal control:}
Based on the trained NN $p^{\text{NN}}(\theta,x)$, the approximate optimal control is generated by \eqref{e:uNN}.

\section{Numerical examples}\label{s:example}
We validate the proposed method on a modified van der Pol system with a non-invariant dictionary space.

Consider the control-affine system
$
\dot{x} = f(x) + g(x)u,
$
with state $x=[x_1,x_2]^\top$, drift term
$f(x) = \begin{bmatrix} x_2 \\ \mu(1-x_1^2)x_2 - \sin(x_1) \end{bmatrix}$,
parameter $\mu=0.8$, control matrix $g(x)=[0,1]^\top$, and scalar input $u\in\mathbb{R}$.

To jointly approximate $f,g$ and their gradients (Theorems \ref{t:bounds-aug}--\ref{t:bounds-aug-general}), we introduce the augmented state $y=\dot{x}$ to construct the 4-dimensional augmented system \eqref{e:augment-system2} with $\xi=[x^\top,y^\top]^\top$ and augmented control $U=[u,v]^\top$. The gradients are
$$
\nabla f(x) = \begin{bmatrix} 0 & 1 \\ -\cos(x_1)-2\mu x_1 x_2 & \mu(1-x_1^2) \end{bmatrix},
\nabla g(x) = \begin{bmatrix} 0 & 0 \\ 0 & 0 \end{bmatrix}.
$$

\subsubsection{Data-driven approximation of the augmented system}
Training  data set are yielded by simulating the exact augmented system, corresponding to experimental measurements in practical scenarios.
States $x\in\Omega=\{(x_1,x_2)\mid x_1^2+x_2^2\le 4\}$ and auxiliary variables $y_1,y_2\in[-2,2]$ are uniformly sampled.
We adopt the time step $\Delta t=0.01$ and the DOP853 integrator to obtain one-step state pairs $(\xi,\xi(t+\Delta t))$.
A total of $7000$ samples are prepared for each control case, and the sample size may be raised for higher precision.
Three independent identically distributed datasets are gathered for $0=[0,0]$, $E_1=[1,0]$ and $E_2=[0,1]$ to approximate Lie operators $\mathcal{L}^0$, $\mathcal{L}^{E_1}$ and $\mathcal{L}^{E_2}$.

We construct a 30-dimensional polynomial observable set
\begin{eqnarray}\label{e:dic-1}
\Psi(x,y)=\{\Phi(x), \Phi(x)y_1, \Phi(x)y_2\},
\end{eqnarray}
where $\Phi(x)$ gathers all monomials of $x$ with degree no greater than three:
$$
\Phi(x)=\{1, x_1,x_2,x_1^2,x_1x_2, x_2^2, x_1^3,\cdots, x_2^3\}.
$$

The constrained approximated-SSD algorithm (Section \ref{s:A-SSD}) is executed on matrices $X,Y$ (cf. \eqref{e:X_0}-\eqref{e:Y_i}), while the essential observables $x_1,x_2,y_1,y_2$ are kept. The invariance error threshold is set to $\epsilon=1\times10^{-2}$. Based on the $7000\times 3$ training samples, the whole dictionary space serves as the approximate invariant subspace with invariance proximity $6.36\times10^{-3}$.
We compute Lie operator approximations via the Moore-Penrose pseudoinverse: $\mathcal{L}_d^0=Y^0(X^0)^\dagger$, $\mathcal{L}_d^{E_i}=Y^{E_i}(X^{E_i})^\dagger\;(i=1,2)$, and $\mathcal{L}_d^U = \mathcal{L}_d^0 + \sum_{i=1}^2 U_i (\mathcal{L}_d^{E_i} - \mathcal{L}_d^0)$ with $U=[U_1,U_2]$.

System components $f^\eta(x)$, $g^\eta(x)$, $\theta^\eta(x)$ and $\chi^\eta(x)$ are extracted from these operators.
A separate $1500\times 3$ sample validation dataset checks the approximation errors against $\eta=0.05$.The error standard deviation results read $\alpha_1=5.08\times 10^{-3}$, $\alpha_2=3.01\times 10^{-2}$, $\beta_1=1.22\times 10^{-3}$, $\beta_2=6.17\times 10^{-3}$.

\subsubsection{Deep learning for the approximate stable manifold}
We tackle the infinite horizon quadratic cost
$
J(x, u) = \int_0^\infty \left( \frac{1}{2}x^T Q x + \frac{1}{2}u^T W u \right) dt
$
with $Q=I_2$ and $W=I_1$.
The approximate Hamiltonian system \eqref{e:Ham-flow-app} is built from $f^\eta, g^{\eta}, \theta^\eta, \chi^{\eta}$.
Solving the approximate Riccati equation \eqref{e:general-Riccati} yields
$P^{\eta}=\left[
             \begin{array}{cc}
               2.98 & 0.434 \\
              0.412 & 2.38 \\
             \end{array}
           \right].$
The Hamiltonian matrix spectrum keeps a gap $0.788$ away from the imaginary axis, satisfying hyperbolicity.
Our deep learning scheme is then adopted for stable manifold approximation, which exhibits favourable sample efficiency and yields precise results with few training trajectories.

\textit{Generation of training and test sample sets:}
We investigate sample size influence by preparing two BVP datasets with 50 and 300 initial points uniformly sampled on the circle centered at $0$ with radius 0.8.
Each two-point BVP satisfying $x(0)=x_0,\,p(\infty)=0$ on $[0,30]$ is solved via \texttt{solve\_bvp}.
Converged trajectories are extended to $[-0.2,0]$ by the Radau scheme inside \texttt{solve\_ivp} to yield stable manifold orbits.
Points are sampled from each orbit: 5 uniform samples on $[-0.2,0]$ and 15 exponentially spaced samples on $[0,30]$ adapted to the decay rate $e^{-0.788t}$. Sample distributions are illustrated in Figure~\ref{f:domain}.
Twenty initial points are sampled following the identical rule to construct the held out test set for generalization assessment.

\begin{figure}[htbp]
\vspace{-0.3cm}
\centering
\subfigure{\includegraphics[width=0.45\textwidth]{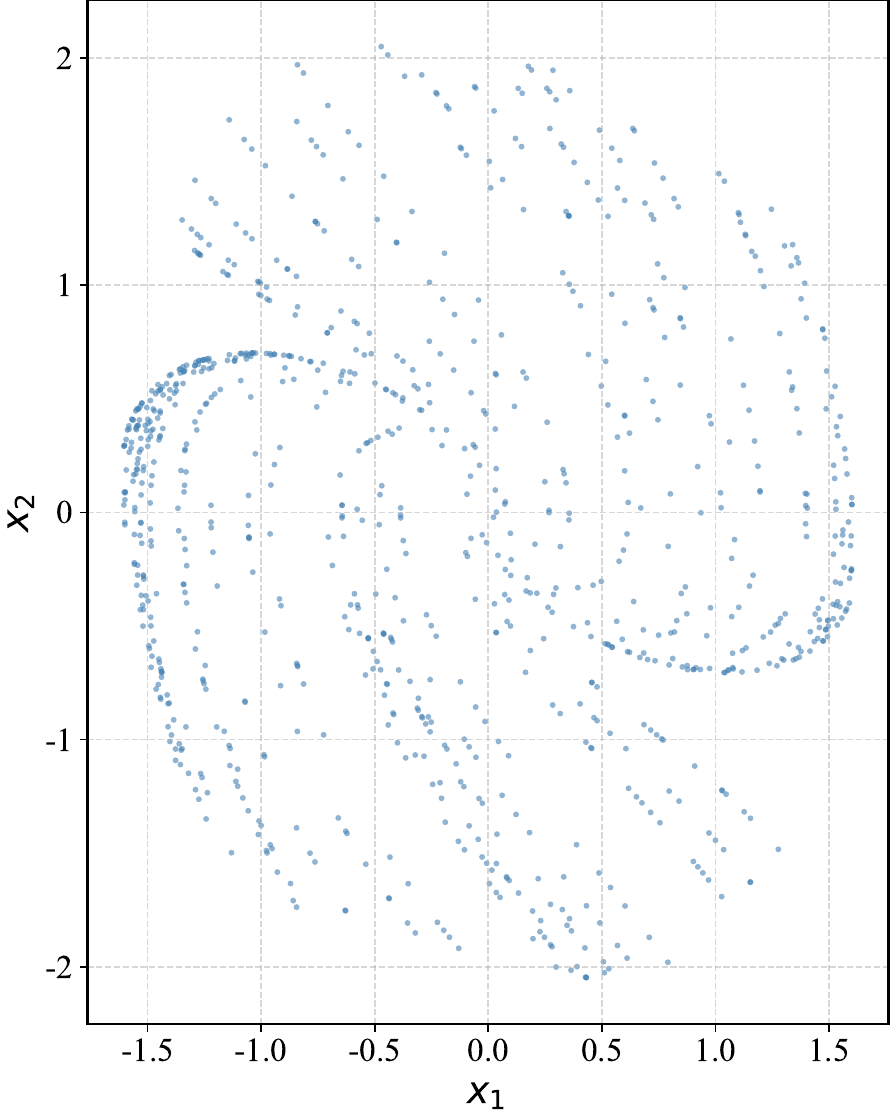}}
\subfigure{\includegraphics[width=0.45\textwidth]{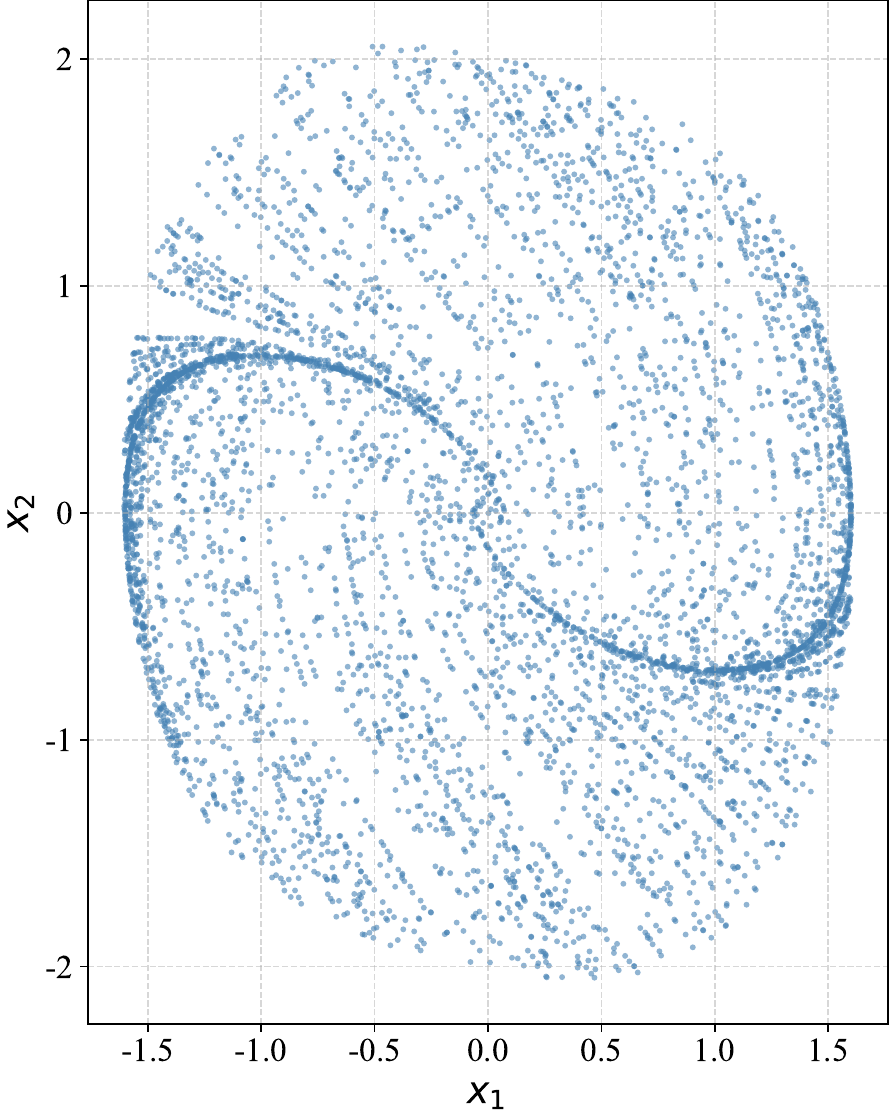}}
\vspace{-0.5cm}
\caption{Training sample distributions obtained from 50 and 300 trajectories.}
\label{f:domain}
\vspace{-0.0cm}
\end{figure}

\textit{Deep Neural Network Training:}
Identical network settings are adopted for comparative tests.
We build an LSTM network $p^{\text{NN}}_o(\theta,x)$ containing two 50 neuron hidden layers and modify it to $p^{\text{NN}}(\theta, x)$ via \eqref{e:NN-modified}, with cutoff function parameters $r=0.02,\,k=1.2$.
This 2-input-2-output network employs sine activation.

The composite loss \eqref{e:loss} uses weights $\sigma_1 = 1,\,\sigma_2 = 0.02$.
Training applies the Adam optimizer (initial learning rate $0.01$, halved every 500 epochs) over 3000 epochs, with $L_2$ gradient clipping at $5.0$.
All simulations run on an ordinary laptop (ThinkPad T480s without using GPU).
Training consumes around 60 s for the 50 trajectory dataset and 410 s for the 300 trajectory dataset.

The 50 trajectory case attains high precision: total loss $5.85\times 10^{-4}$, MSE $4.79\times 10^{-4}$ and maximum error $5.30\times 10^{-3}$.
Increasing trajectories to 300 yields only minor improvement (total loss $5.73\times 10^{-4}$, MSE $4.68\times 10^{-4}$, maximum error $5.22\times 10^{-3}$), showing near optimal performance under scarce training data.

We evaluate the two trained models on an independent test set to assess generalization performance.
The model trained on 50 trajectories achieves test total loss $7.79\times 10^{-4}$, MSE $5.72\times 10^{-4}$ and maximum error $1.04\times 10^{-2}$.
The model trained on 300 trajectories yields improved generalization, with test total loss $5.75\times 10^{-4}$, MSE $4.73\times 10^{-4}$ and maximum error $5.08\times 10^{-3}$. The minor train-test error gap verifies its reliable generalization and sufficient accuracy for stable-manifold-based control design under low-data conditions.

\subsubsection{Simulations}
We compute the feedback control law \eqref{e:uNN} from the trained network $p^{\text{NN}}(\theta, x)$ and carry out closed loop simulation \eqref{e:x-closed-app} over multiple initial states.
Three controllers are compared: our DD-SM approach trained on 50 and 300 trajectories, exact stable manifold (SM) optimal control and the data driven Riccati based LQR regulator.

Table \ref{tab:cost_comparison} lists the costs for representative starting points.
Both DD-SM configurations deliver costs close to the exact SM solution, and their state trajectories match the exact SM curve well, while the LQR response differs visibly (Figure \ref{f:comp1}).

\begin{table}[ht]
\centering
\caption{Cost comparison of different control methods}
\begin{tabular}{c|cccc}
\hline
Initial state $x_0$ & DD-SM (300) & DD-SM (50) & SM  & LQR \\
\hline
$(0.600, 0.000)$ & 0.5154 & 0.5148 & 0.5147 & 0.7194 \\
$(0.424, 0.424)$ & 0.5429 & 0.5421 & 0.542 & 0.7408 \\
$(0.000, 1.000)$ & 1.1816 & 1.1808 & 1.1805 & 1.7191 \\
$(-0.707, 0.707)$ & 1.0176 & 1.0194 & 1.0171 & 1.5303 \\
$(-1.400, 0.000)$ & 2.293 & 2.2942 & 2.3005 & 3.1319 \\
$(-0.990, -0.990)$ & 2.4842 & 2.4847 & 2.4858 & 3.4145 \\
$(-0.000, -1.800)$ & 3.6391 & 3.6395 & 3.6398 & 5.371 \\
$(1.273, -1.273)$ & 2.4653 & 2.4656 & 2.4651 & 3.781 \\
$(0.043, 1.199)$ & 1.6961 & 1.6953 & 1.6949 & 2.4645 \\
$(0.905, -1.196)$ & 2.0436 & 2.0435 & 2.0435 & 3.1058 \\
\hline
\end{tabular}
\label{tab:cost_comparison}
\end{table}

\begin{figure}[htbp]
\vspace{-0.3cm}
\centering
\subfigure{\includegraphics[width=0.45\textwidth]{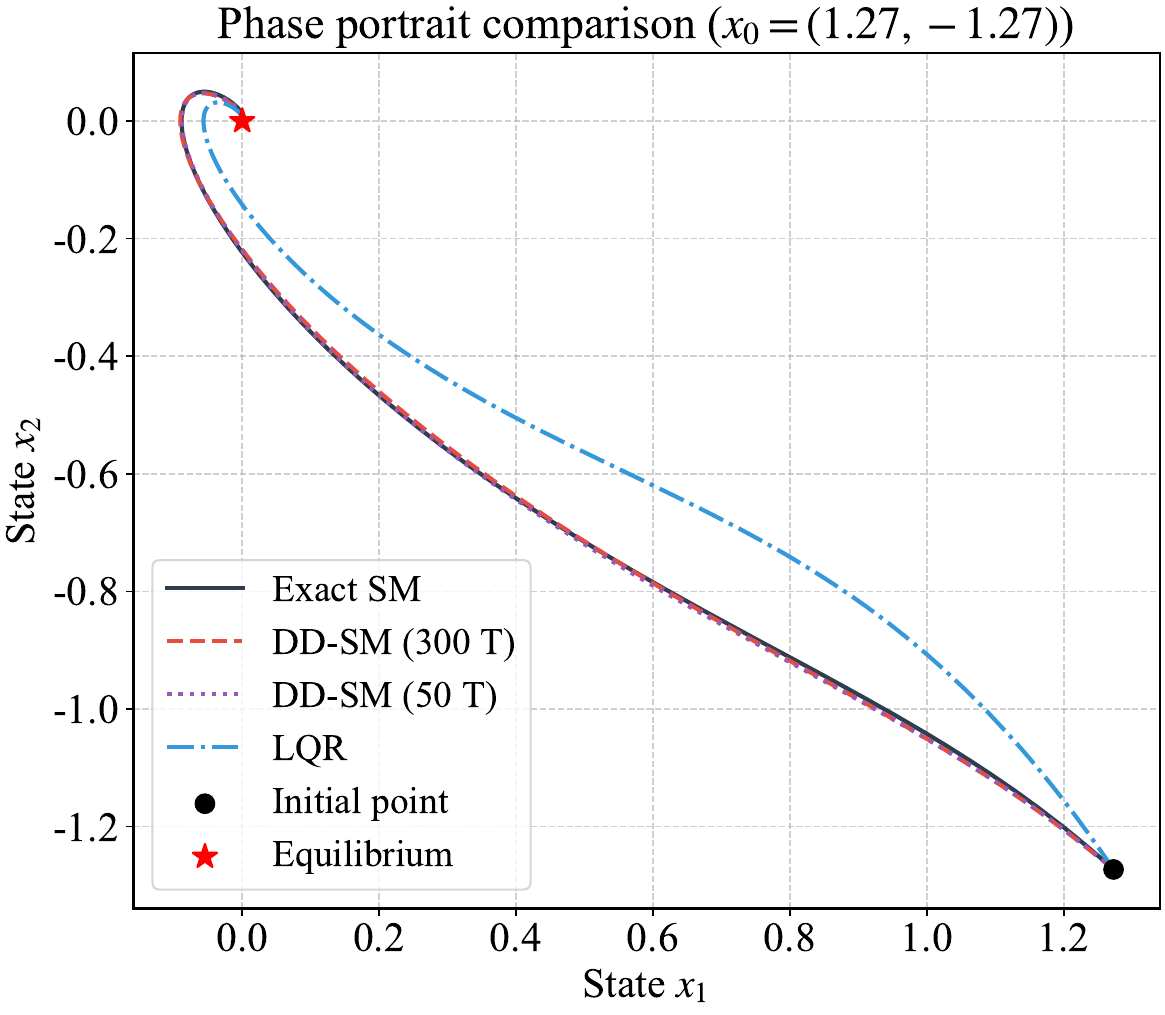}}
\subfigure{\includegraphics[width=0.45\textwidth]{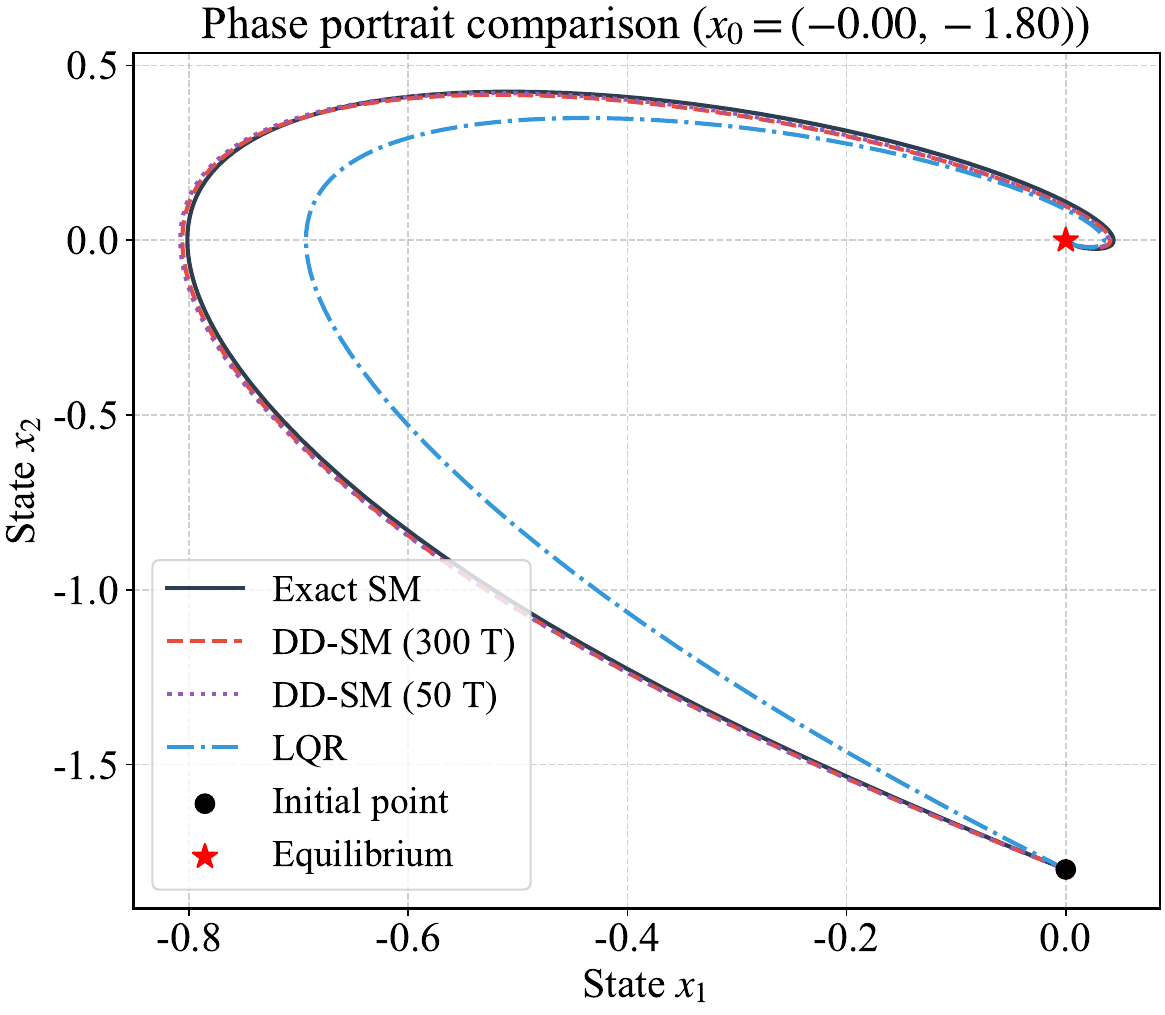}}
\vspace{-0.5cm}
\caption{Trajectory comparison: 50 and 300 trajectories DD-SM, exact SM and data-driven LQR}
\label{f:comp1}
\end{figure}

Each control signal is generated within one millisecond, satisfying real-time constraints.
\begin{figure}[htbp]
\vspace{-0.3cm}
\centering
\subfigure{\includegraphics[width=0.45\textwidth]{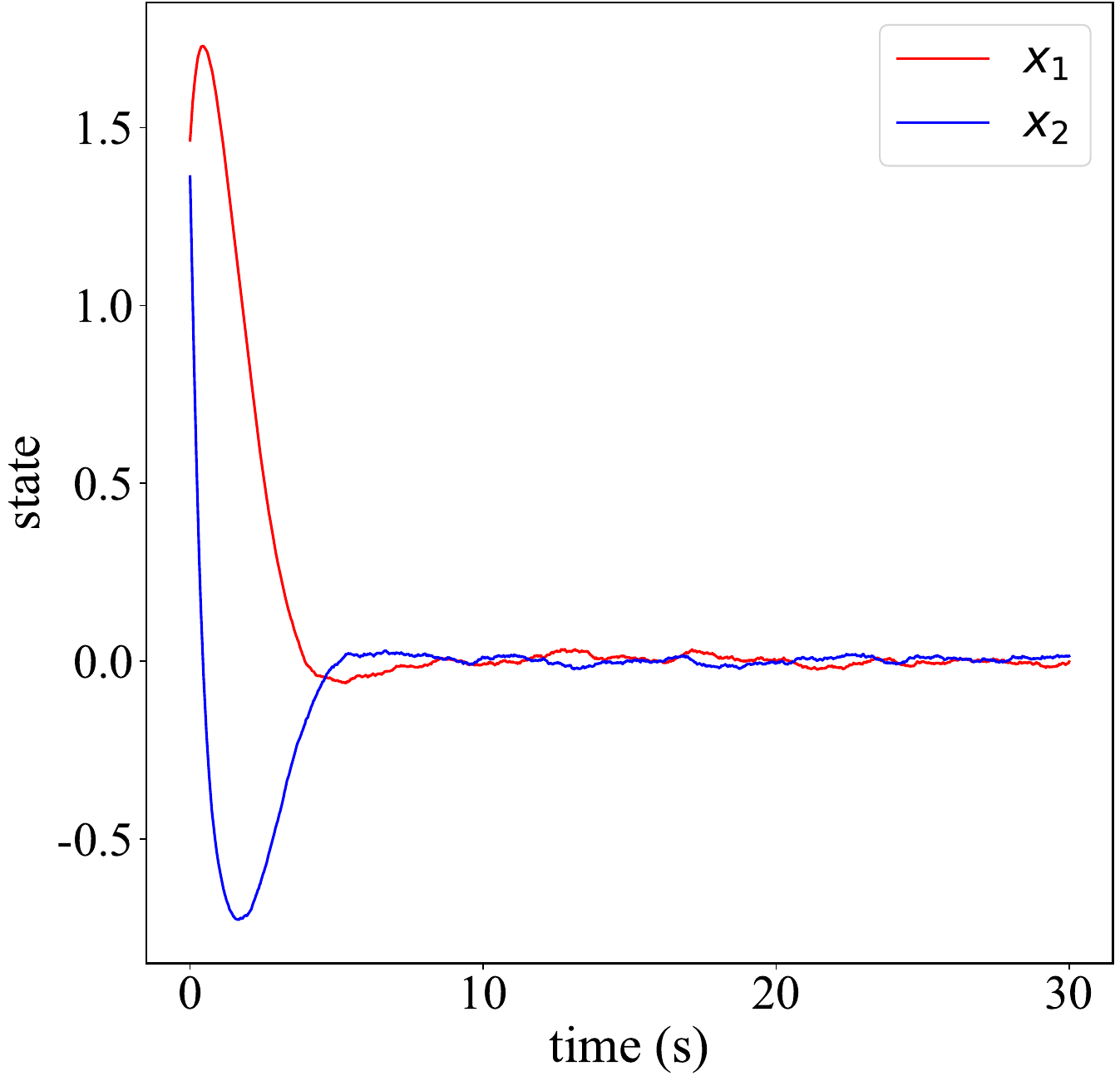}}
\subfigure{\includegraphics[width=0.45\textwidth]{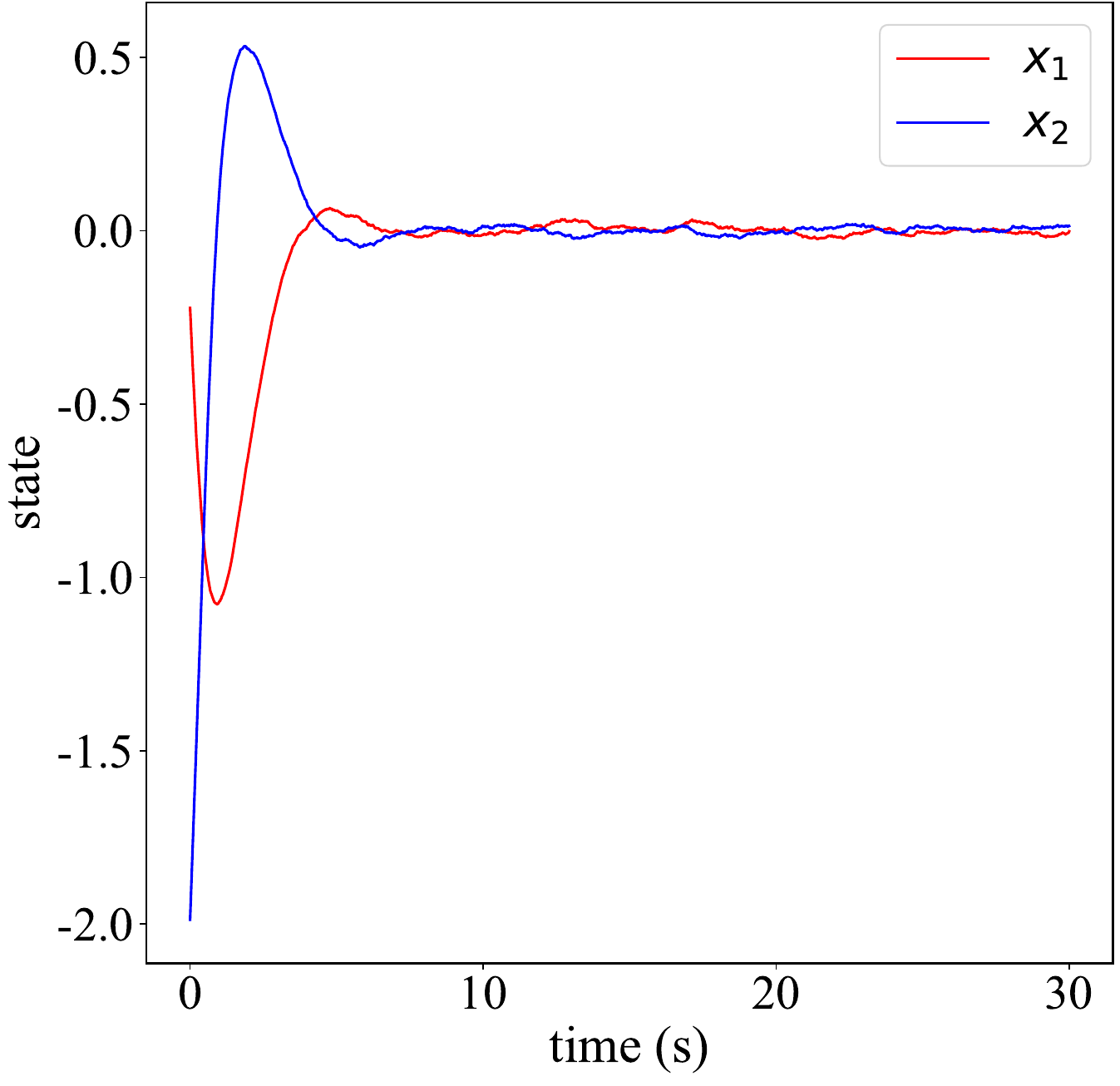}}
\vspace{-0.5cm}
\caption{Disturbance-rejection performance}
\label{f:robust}
\end{figure}
The controller possesses satisfactory robustness; trajectories can still be stabilized under 100-Hz white noise perturbation with standard deviation $0.01$ (Figure \ref{f:robust}).

\section{Conclusion}\label{s:conclusion}

Nonlinear optimal control with unknown dynamics is still challenging: existing Koopman based approaches lack rigorous links between modelling error and HJB closed loop stability as well as optimality. This work proposes the data driven stable manifold (DD-SM) method, which unites Koopman operator theory and the HJB stable manifold technique to synthesise optimal feedback control purely from trajectory measurements.

Our main results are four-fold.
First, an augmented control affine system is constructed, and an EDMD-SSD joint scheme approximates the drift, control matrix and their gradients. Probabilistic finite sample error bounds are derived for both invariant and non-invariant dictionaries.
Second, ODE and Lyapunov-Perron perturbation analysis proves the semi-global approximation error of the learned stable manifold can be arbitrarily small and decays with more training samples.
Third, relying on the manifold error estimates, we prove closed loop exponential stability and bound the optimality gap; better model precision improves the stability margin and cost performance. The above arguments build a complete theoretical chain from sample error to closed-loop guarantees. Finally, based on the theoretical results, we propose an efficient pipeline containing adaptive sampling and deep network fitting. Its inference latency is below one millisecond for real-time demands.
Tests upon a modified van der Pol oscillator demonstrate that the effectiveness of the DD-SM method.

\appendix

\subsection{Invariance Proximity for linear operators}\label{a:invariance-prox}
Given a Hilbert space linear operator $A:\mathbb H\to\mathbb H$ and finite dimensional subspace $\mathbb V\subset\mathbb H$, the invariance proximity index reads
$
\mathcal{I}_A(\mathbb V) = \sup_{\substack{v \in \mathbb V \\ \|Av\| \neq 0}} \frac{\|Av - \mathcal P_{\mathbb V } Av\|}{\|Av\|},
$
with $\mathcal P_{\mathbb V}$ the orthogonal projection onto $\mathbb V$.
This index obeys two basic properties: $\mathcal{I}_A(\mathbb V)=0$ exactly when $\mathbb V$ is $A$-invariant, and $0\le\mathcal{I}_A(\mathbb V)\le1$, where greater values stand for stronger invariance violation. The same proof as \cite[Theorem 5.1]{haseli2023invariance} yields the following result.
\begin{theorem}\label{t:index-sin}
Let $A\mathbb V = \{Av \mid v \in \mathbb V\}$ be the image of $\mathbb V$ under $A$, and let $0 \leq \theta_1 \leq \theta_2 \leq \cdots \leq \theta_m \leq \frac{\pi}{2}$ be the Jordan principal angles between $\mathbb V$ and $A\mathbb V$ (where $m = \dim(A\mathbb V)$). Then
$
\mathcal{I}_A(\mathbb V) = \sin\theta_m,
$
and there exists $v^* \in \mathbb V$ such that
$
\mathcal{I}_A(\mathbb V) = \frac{\|Av^* - \mathcal P_{\mathbb V }Av^*\|}{\|Av^*\|}.
$
\end{theorem}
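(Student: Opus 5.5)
The plan is to reduce the supremum over $v\in\mathbb V$ to a supremum over unit vectors of the finite-dimensional subspace $A\mathbb V$, and then to identify that supremum with the smallest singular value of a projection, which the definition of principal angles ties to $\theta_m$. The degenerate case $A\mathbb V=\{0\}$ (so $m=0$) makes the supremum empty; I would set it aside by the convention $\mathcal I_A(\mathbb V)=0$ and assume $m\ge 1$.

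\textbf{Step 1: homogeneity.} The ratio $\|Av-\mathcal P_{\mathbb V}Av\|/\|Av\|$ depends only on the direction of $w=Av$. Hence
\[
\mathcal I_A(\mathbb V)=\sup\{\|(I-\mathcal P_{\mathbb V})w\| : w\in A\mathbb V,\ \|w\|=1\}.
\]
Since $\mathbb V$ is finite dimensional, $A\mathbb V$ is a finite-dimensional (hence closed) subspace with $m=\dim A\mathbb V\le\dim\mathbb V$.

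\textbf{Step 2: Pythagoras.} For a unit vector $w$, orthogonality of $\mathcal P_{\mathbb V}$ gives
\[
\|(I-\mathcal P_{\mathbb V})w\|^2=1-\|\mathcal P_{\mathbb V}w\|^2.
\]
Therefore maximizing the numerator is the same as minimizing $\|\mathcal P_{\mathbb V}w\|$ over the unit sphere of $A\mathbb V$.

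\textbf{Step 3: singular values.} Fix orthonormal bases $Q_{\mathbb V}$ of $\mathbb V$ and $Q_{A}$ of $A\mathbb V$. Writing $w=Q_A c$ with $\|c\|=1$, we get $\|\mathcal P_{\mathbb V}w\|=\|Q_{\mathbb V}^*Q_A c\|$. The minimum of this quantity is the smallest singular value $\sigma_{\min}$ of the $\dim\mathbb V\times m$ matrix $Q_{\mathbb V}^*Q_A$.

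By the standard (SVD or variational) characterization of Jordan principal angles, the singular values of $Q_{\mathbb V}^*Q_A$ are $\cos\theta_1\ge\cdots\ge\cos\theta_m$. There are exactly $m$ of them because $m\le\dim\mathbb V$, so $\sigma_{\min}=\cos\theta_m$. Hence
\[
\mathcal I_A(\mathbb V)=\sqrt{1-\cos^2\theta_m}=\sin\theta_m .
\]

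\textbf{Step 4: attainment.} Take $c^*$ to be a right singular vector of $Q_{\mathbb V}^*Q_A$ for $\cos\theta_m$, and set $w^*=Q_Ac^*\in A\mathbb V$. Choose any $v^*\in\mathbb V$ with $Av^*=w^*$. Then $\|Av^*\|=1\neq 0$ and the ratio equals $\sin\theta_m$. Alternatively, attainment follows from compactness of the unit sphere in $A\mathbb V$.

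The only step needing care is Step 3. One must make sure the convention for Jordan angles used in \cite{haseli2023invariance} (ordered cosines as singular values of the cross-Gram matrix) is exactly the one used here. One must also check that no angles are "missing" when $\dim\mathbb V>m$; this holds because the count of angles is $\min(\dim\mathbb V,m)=m$.
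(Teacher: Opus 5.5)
Your proof is correct and is essentially the argument the paper invokes when it defers to the proof of Theorem~5.1 in \cite{haseli2023invariance}. That argument normalizes to the unit sphere of $A\mathbb V$, uses $\|(I-\mathcal P_{\mathbb V})w\|^2=1-\|\mathcal P_{\mathbb V}w\|^2$, and identifies $\min\|\mathcal P_{\mathbb V}w\|$ with $\cos\theta_m$ through the singular-value (equivalently max--min) characterization of Jordan angles, which gives exactly $m$ angles since $m\le\dim\mathbb V$; your attainment step and your exclusion of the degenerate case $A\mathbb V=\{0\}$ are also sound.
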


%

\subsection{Estimation for $\bar x^{\eta}(t)-\bar x(t)$}\label{a:x-xeta}

Let $\Delta(t) = \bar x^\eta(t) - \bar x(t)$. Subtracting \eqref{e:bar-x-exact} from \eqref{e:bar-x-eta} gives
\begin{align}\label{e:delta-e}
\dot{\Delta}(t) &= \Theta \Delta(t) + (\Theta^\eta - \Theta) \bar x^\eta(t) + N_s^\eta(\bar x^\eta, \bar p(\bar x^\eta))\\
 &~~~- N_s(\bar x, \bar p(\bar x)), \quad \Delta(0) = 0.\notag
\end{align}
Note that $\|\Theta^\eta-\Theta\|\le C_\Theta\eta$ and $\|\bar x^\eta(t)\|\le K e^{-bt}\|\bar\xi\|$ imply
  \begin{eqnarray}\label{e:T_1}
  \|(\Theta^\eta - \Theta) \bar x^\eta(t)\| \leq C_\Theta K \eta e^{-bt} \|\bar\xi\|.
  \end{eqnarray}

For nonlinear perturbation term, letting $N_s^\eta(\bar x^\eta, \bar p(\bar x^\eta)) - N_s(\bar x^\eta,\bar p(\bar x^\eta))=T_1$ and $N_s(\bar x^\eta, \bar p(\bar x^\eta)) - N_s(\bar x, \bar p(\bar x))=T_2$, we
decompose
$
N_s^\eta(\bar x^\eta, \bar p(\bar x^\eta))-N_s(\bar x, \bar p(\bar x))=T_1+T_2.
$
Using Lemma \ref{l:app-condition}, \ref{l:p-app} and \ref{l:F-error}, we obtain that
$
\|N_s^\eta(\bar x,\bar p) - N_s(\bar x,\bar p)\| \leq C_N\eta \|(\bar x,\bar p)\|.
$
Hence, it holds that
$
\|T_1(t)\| \leq C_N \eta(1+L_{\bar p}) \|\bar x^\eta(t)\| \leq \hat C_N K \eta e^{-b t} \|\bar \xi\|.
$
Moreover, from \eqref{e:lip-con}, we have that
$
\|T_2(t)\| \leq 2 L_{N_s} K e^{-b t} \|\bar\xi\|\|\Delta(t)\|.
$
Hence we get
\begin{align}\label{e:T_2}
&\|N_s^\eta(\bar x^\eta, \bar p(\bar x^\eta))-N_s(\bar x, \bar p(\bar x))\| \\
&\leq C_N K \eta e^{-b t} \|\bar \xi\| + 2 L_{N_s} K e^{-b t} \|\bar\xi\|\|\Delta(t)\|.\notag
\end{align}

Substituting \eqref{e:T_1} and \eqref{e:T_2} into \eqref{e:delta-e}, multiplying by integrating factor $e^{-t \Theta}$, and integrating, we obtain that
$
\|e^{-t \Theta} \Delta(t)\| \leq K C_1 \eta \|\bar\xi\| t + 2 K L_{N_s} \|\bar\xi\| \int_0^t \|\Delta(s)\| ds,
$
where $C_1 = \max(C_{\Theta} K, C_N K)$. Using $\|e^{t \Theta}\| \leq K e^{-bt}$ and $te^{-bt}\le \frac{2}{be}e^{-\frac{b}{2}t}$, we get
\begin{eqnarray}\label{e:gronwall-1}
\|\Delta(t)\| \leq C_{2} \eta e^{-\frac{b}{2}t} \|\bar \xi\| + C_G \|\bar \xi\| e^{-bt} \int_0^t \|\Delta(s)\| ds,
\end{eqnarray}
with $C_{2} = \frac{2K C_1}{be}$, $C_G = 2 K L_{N_s}$. Define $z(t) = e^{\frac{b}{2} t} \|\Delta(t)\|$. Then \eqref{e:gronwall-1}
becomes
$
z(t) \leq C_{2} \eta \|\bar \xi\| + C_G \|\bar \xi\| e^{-\frac{b}{2}t} \int_0^t e^{-\frac{b}{2}s}z(s) ds.
$
By Gronwall's inequality,
$
z(t) \leq C_{2} \eta e^{\frac{2 C_G }{b}}\|\bar \xi\|^2.
$
Boundedness of $\Omega$ yields
$
\|x^\eta(t) - x(t)\| \leq C_{E_3} \eta e^{-\frac{b}{2}t} \|\bar \xi\|,
$
where $C_{E_3} > 0$ is independent of $\eta$, $\bar\xi$, and $t$.

\bibliographystyle{plain}

\end{document}